\newtheorem{algorithm}{Algorithm}[section]
\newtheorem{THEOREM}{THEOREM}[section]
\newcommand{\be}{\begin{equation}}
\newcommand{\ee}{\end{equation}}
\newcommand{\beqy}{\begin{eqnarray}}
\newcommand{\eeqy}{\end{eqnarray}}
\newcommand{\beqynn}{\begin{eqnarray*}}
\newcommand{\eeqynn}{\end{eqnarray*}}
\newcommand{\bi}{\begin{itemize}}
\newcommand{\ei}{\end{itemize}}
\newcommand{\ba}{\begin{array}}
\newcommand{\ea}{\end{array}}
\newcommand{\bmx}{\begin{bmatrix}}
\newcommand{\emx}{\end{bmatrix}}
\newcommand{\bsmx}{\left[\begin{smallmatrix}}
\newcommand{\esmx}{\end{smallmatrix}\right]}
\newcommand{\bmxc}[1]{\left[\begin{array}{@{}#1@{}}}
\newcommand{\emxc}{\end{array}\right]}
\newcommand{\bc}{\begin{center}}
\newcommand{\ec}{\end{center}}
\newcommand{\diag}{\mathrm{diag}}
\newcommand{\nbn}{{n \times n}}
\newcommand{\A}{\boldsymbol{A}}
\newcommand{\B}{\boldsymbol{B}}
\newcommand{\D}{\boldsymbol{D}}
\newcommand{\G}{\boldsymbol{G}}
\newcommand{\I}{\boldsymbol{I}}
\renewcommand{\L}{\boldsymbol{L}}
\newcommand{\M}{\boldsymbol{M}}
\renewcommand{\P}{\boldsymbol{P}}
\newcommand{\Q}{\boldsymbol{Q}}
\newcommand{\R}{\boldsymbol{R}}
\newcommand{\U}{\boldsymbol{U}}
\newcommand{\W}{\boldsymbol{W}}
\newcommand{\Z}{\boldsymbol{Z}}
\renewcommand{\a}{\boldsymbol{a}}
\newcommand{\e}{\boldsymbol{e}}
\renewcommand{\l}{\boldsymbol{l}}
\newcommand{\n}{\boldsymbol{n}}
\newcommand{\p}{\boldsymbol{p}}
\newcommand{\q}{\boldsymbol{q}}
\renewcommand{\u}{\boldsymbol{u}}
\renewcommand{\v}{\boldsymbol{v}}
\newcommand{\w}{\boldsymbol{w}}
\newcommand{\x}{{\boldsymbol{x}}}
\newcommand{\y}{{\boldsymbol{y}}}
\newcommand{\z}{\boldsymbol{z}}
\newcommand{\0}{{\boldsymbol{0}}}
\newcommand{\bd}{{\bar{d}}}
\newcommand{\bl}{{\bar{l}}}
\newcommand{\cbx}{{\check{\x}}}
\newcommand{\cbz}{{\check{\z}}}
\newcommand{\hz}{{\hat{z}}}
\newcommand{\hbx}{{\hat{\x}}}
\newcommand{\hbz}{{\hat{\z}}}
\newcommand{\bbz}{{\bar{\z}}}
\newcommand{\bz}{{\bar{z}}}
\newcommand{\bx}{{\bar{x}}}
\newcommand{\bbL}{{\bar{\L}}}
\newcommand{\bbD}{{\bar{\D}}}
\newcommand{\bbR}{{\bar{\R}}}
\newcommand{\tR}{{\tilde{\R}}}
\newcommand{\by}{{\bar{\y}}}
\newcommand{\bu}{{\bar{\u}}}
\newcommand{\bbl}{{\bar{\l}}}
\providecommand{\norm}[1]{\lVert#1\rVert}
\providecommand{\round}[1]{\lfloor#1\rceil}
\begin{document}

\maketitle

\begin{romanPagenumber}{2}%

\SetDedicationName{DEDICATION}
\SetDedicationText{\centering \IVisualEmphasis{To my loving parents,\\ 
Manal Abdo and Ahmed-Fouad Al Borno \\}
\vspace{70mm}
\centering \IVisualEmphasis{
God is the Light of the heavens and the earth.\\ The Parable of His Light is as if there were a Niche and within it a Lamp:\\ the Lamp enclosed in Glass: the glass as it were a brilliant star:\\ Lit from a blessed Tree, an Olive, neither of the east nor of the west,\\ whose oil is well-nigh luminous, though fire scarce touched it:\\ Light upon Light!} \\
{\bf Qur'an}, 24:35
}
\Dedication

\SetAcknowledgeName{ACKNOWLEDGMENTS}
\SetAcknowledgeText{
I am fortunate to acknowledge: 
\begin{itemize}
\item  Xiao-Wen Chang as an advisor; for his patience, constructive criticism and careful review of this thesis which went above and beyond the call of duty.  
\item  true friends at the School of Computer Science; I only wish I was as good a friend to you, as you to me; Stephen Breen, Zhe Chen, Arthur Guez, Sevan Hanssian, Wen-Yang Ku, Mathieu Petitpas, Shaowei Png, Milena Scaccia, Ivan Savov, Ben Sprott, David Titley-Peloquin, Yancheng Xiao, Xiaohu Xie, and especially Omar Fawzi;  
\item  a wonderful course on Quantum Information Theory by Patrick Hayden;
\item a younger brother for his limitless generosity and a loving family whose the source of my joy and the removal of my sorrow.  
\end{itemize}
}
\Acknowledge

\SetAbstractEnName{ABSTRACT}
\SetAbstractEnText{
Solving an integer least squares (ILS) problem usually consists of two stages:
reduction and search. This thesis is concerned with the reduction process
for the ordinary ILS problem and the ellipsoid-constrained ILS problem.
For the ordinary ILS problem, we dispel common misconceptions on the reduction stage
in the literature and show what is crucial to the efficiency of the
search process. The new understanding allows us to design a new reduction
algorithm which is more efficient than the well-known LLL reduction algorithm.
Numerical stability is taken into account in designing the new reduction algorithm.
For the ellipsoid-constrained ILS problem, we propose a new
reduction algorithm which, unlike existing algorithms, uses all the available
information. Simulation results indicate that new algorithm can greatly reduce
the computational cost of the search process when the measurement noise is large.
}
\AbstractEn
\SetAbstractFrName{ABR\'{E}G\'{E}}
\SetAbstractFrText{
La r\'{e}solution de probl\`{e}mes de moindres carr\'{e}s en nombres entiers (ILS) comprend habituellement deux stages: la r\'{e}duction et la recherche.
Cette th\`{e}se s'int\'{e}resse \`{a} la r\'{e}duction pour le probl\`{e}me ILS ordinaire et le probl\`{e}me ILS sous contrainte d'ellipse.  Pour le probl\`{e}me ILS ordinaire, nous  dissipons des erreurs communes de compr\'{e}hension \`{a} propos de la r\'{e}duction dans la litt\'{e}rature et nous montrons ce qui est r\'{e}ellement crucial pour l'efficacit\'{e} de la recherche.   Ce r\'{e}sultat nous permet de d\'{e}velopper un nouvel algorithme de r\'{e}duction plus efficace que le c\'{e}l\`{e}bre algorithme LLL.  La stabilit\'{e} num\'{e}rique est prise en compte dans le d\'{e}veloppement du nouvel algorithme.
Pour le probl\`{e}me ILS sous contrainte d'ellipse, nous proposons un nouvel algorithme de r\'{e}duction qui, contrairement aux algorithmes existants, utilise toute l'information disponible.  Les r\'{e}sultats de simulations indiquent que le nouvel algorithme r\'{e}duit consid\'{e}rablement les co\^{u}ts de calcul de la recherche lorsque la variance du bruit est large dans le mod\`{e}le lin\'{e}aire.   
}
\AbstractFr

\SetDeclarationName{DECLARATION}
\SetDeclarationText{
The initial version of this thesis was submitted before finding \cite{LinN07}.
Here, we point out the similarities and differences with the results 
derived in Chapter 4 of this thesis.
First, they show that integer Gauss transformations do not affect the Babai point, while we show that integer Gauss transformations do not affect the entire search process.
Second, their modified reduction algorithm uses Gram-Schmidt orthogonalization.
Our modified reduction algorithm uses Householder reflections and Givens rotation. The latter is more stable.   
Third, our algorithm  does not apply an integer Gauss transformation if no column permutation is needed as it is unnecessary.
Finally, they do not take numerical stability into account, while we do.
Specifically, our reduction algorithm uses extra integer Gauss transformations to prevent the serious rounding errors that can occur due to the increase of the off-diagonal elements. 
}

\tableofcontents
\listoftables
\listoffigures

\nomenclature{BILS:}{box-constrained integer least squares}
\nomenclature{CLLL:}{constrained LLL}
\nomenclature{EILS:}{ellipsoid-constrained integer least squares}
\nomenclature{GNSS:}{global navigation satellite systems}
\nomenclature{i.i.d.:}{independent and identically distributed}
\nomenclature{IGT:}{integer Gauss transformation}
\nomenclature{ILS:}{integer least squares}
\nomenclature{LAMBDA:}{Least-squares AMBiguity Decorrelation Adjustment}
\nomenclature{LS:}{least squares}
\nomenclature{MLAMBDA:}{Modified Least-squares AMBiguity Decorrelation Adjustment}
\nomenclature{PREDUCTION:}{Partial Reduction}
\nomenclature{PLLL:}{partial LLL}
\printnomenclature
\Abbreviation

\chapter{Introduction}
Suppose we have the following linear model
\begin{equation}\label{eq:linearmodel}
\y = \A \x + \v, 
\end{equation}
where $\y \in \mathbb{R}^m$ is a \emph{measurement vector}, $\A \in \mathbb{R}^{m \times n}$ is a \emph{design matrix} with full column rank, $\x \in \mathbb{R}^n$ is an unknown \emph{parameter vector}, and $\v \in \mathbb{R}^n$ is a \emph{noise vector} that follows a normal distribution with mean $\0$ and covariance $\sigma^2 \I$.  
We need to find an estimate $\hbx$ of the unknown vector $\x$.  One approach is to solve the following \emph{real least squares (LS)} problem
\begin{equation}\label{eq:realLE}
\min_{\x \in \mathbb{R}^n} \| \y-\A \x \|_2^2.
\end{equation}
We refer to \eqref{eq:realLE} as the standard form of the LS problem. 
The real least squares solution is given by (see, e.g., \cite[Section 5.3]{GolV96})
\begin{equation}\label{eq:realS}
\hbx = (\A^T \A)^{-1} \A^T \y.
\end{equation}
Substituting \eqref{eq:linearmodel} into \eqref{eq:realS}, we get
\begin{equation}\label{eq:bLCP}
\hbx = \x + (\A^T \A)^{-1} \A^T \v.
\end{equation} 
Let $\W_{\hbx}\in \mathbb{R}^{n\times n}$ be the covariance matrix of $\hbx$.
Using the law of covariance propagation on \eqref{eq:bLCP} (see, e.g., \cite[p.\ 329]{StrB97}), it 
follows that $\W_{\hbx} = \sigma^2(\A^T \A)^{-1}$.  

\end{romanPagenumber}
In many applications, $\x$ is constrained to some discrete integer set $\mathcal{D}$, e.g., a box constraint.  Then, one wants to solve the 
\emph{integer least squares (ILS) problem} 
\begin{equation}\label{eq:gILSintro}
\min_{\x \in \mathcal{D}} \| \y-\A \x \|_2^2.
\end{equation}
If $\mathcal{D}$ is the whole integral space $\mathbb{Z}^n$, then we refer to  
\begin{equation}\label{eq:OILSintro}
\min_{\x \in \mathbb{Z}^n} \| \y-\A \x \|_2^2
\end{equation}
as the \emph{ordinary integer least squares (OILS) problem}.
In lattice theory, the set $\mathcal{L}(\A) = \{ \A \x : \x \in \mathbb{Z}^n \}$ is referred to as the lattice generated by $\A$.   The OILS problem is to find the point in $\mathcal{L}(\A)$ which is closest to $\y$.  For this reason,  the OILS problem is also called the \emph{closest point problem}.   
Since the residual $\y-\A \hbx$ is orthogonal to the range of $\A$, we have
\begin{equation}\label{eq:dev}
 \begin{split}
    \| \y-\A \x \|_2^2  & =   \| \y-\A \hbx - \A(\x - \hbx) \|_2^2 \\
                        & =  \| \y-\A \hbx \|_2^2 + \| \A(\x - \hbx) \|_2^2 \\
                        & =   \| \y-\A \hbx \|_2^2 + (\x - \hbx)^T \A^T\A (\x - \hbx) \\
                        & =   \| \y-\A \hbx \|_2^2 + \sigma^2(\x - \hbx)^T \W_{\hbx}^{-1} (\x - \hbx).  
  \end{split}
\end{equation} 
Thus, the OILS problem \eqref{eq:OILSintro} is equivalent to
\begin{equation}\label{eq:qilsintro}
\min_{\x\in \mathbb{Z}^n} (\x-\hbx)^T\W_{\hbx}^{-1}(\x-\hbx).
\end{equation} 
We refer to \eqref{eq:qilsintro} as the quadratic form of the OILS problem. 

ILS problems arise from many applications, such as global navigation satellite systems, communications, bioinformatics, radar imaging, cryptography, Monte Carlo second-moment estimation, lattice design, etc., see, e.g., \cite{AgrEVZ02} and \cite{HasB98}.  Unlike the real least squares problem 
\eqref{eq:realLE}, the ILS problem (\ref{eq:gILSintro}) is NP-hard (see \cite{Boa81} and \cite{Mic01}).  Since all known algorithms to solve the ILS problem have exponential complexity, designing efficient algorithms is crucial for real-time applications. A typical approach for solving an ILS problem consists of two stages: reduction and search.  The reduction process transforms (\ref{eq:gILSintro}) to a new ILS problem, where $\A$ is reduced to an upper triangular matrix.  The search process searches for the solution of the new ILS problem in a geometric region.  The main goal of the reduction process is to make the search process more efficient. 

For solving (\ref{eq:OILSintro}), two typical reduction strategies are employed in practice.  One is the Korkine-Zolotareff (KZ) reduction (see \cite{KorZ73}), which transforms the original OILS problem to a new one which is optimal for the search process.  The other is the Lenstra-Lenstra-Lov\'{a}sz (LLL) reduction (see \cite{LenLL82}), which transforms the original OILS problem to a new one which is approximately optimal for the search process. 
Unlike the KZ reduction, it is known how to compute the LLL reduction efficiently in polynomial time.  For this reason, the LLL reduction is more widely used in practice.  Simulations in \cite{AgrEVZ02} suggest to use the KZ reduction only if we have to solve many OILS problems with the same generator matrix $\A$.  Otherwise, the LLL reduction should be used.  For the search process, two common search strategies are the Pohst enumeration strategy (see \cite{FinP85}) and the Schnorr-Euchner enumeration strategy (see \cite{SchE94}).  Both examine the lattice points lying inside a hyper-sphere, but in a different order.  Simulations in \cite{AgrEVZ02} indicate that the Schnorr-Euchner strategy is more efficient than the Pohst strategy.        

In high precision relative global navigation satellite systems (GNSS) positioning, a key component is to resolve the unknown so-called 
double differenced cycle ambiguities of the carrier phase data as integers. 
The most successful method of ambiguity resolution in the GNSS literature
is the well-known LAMBDA (Least-squares AMBiguity Decorrelation Adjustment) method
presented by Teunissen (see, e.g., \cite{Teu93, Teu95b, Teu95,  Teu98, Teu99}).
This method solves the OILS problem \eqref{eq:qilsintro}.
A detailed description of the LAMBDA algorithm and implementation is given by \cite{JonT96}.
Its software (Fortran version  and MATLAB version) is available from
Delft University of Technology. Frequently asked questions and misunderstanding
about the LAMBDA method are addressed by \cite{JooT02}.
Recently, a modified method called MLAMBDA was proposed by Chang etc in \cite{ChaYZ05},
which was then further modified and extended to handle mixed ILS problems
by using orthogonal transformations, resulting in the MATLAB package MILS (see \cite{ChaZ07}).

In some applications, the point $\A \x$ in (\ref{eq:gILSintro}) is constrained to be inside a given hyper-sphere.  The ILS problem becomes
\begin{equation}\label{eq:EILSintro}
\min_{\x \in \mathcal{E}} \| \y-\A \x \|_2^2, \quad 
\mathcal{E} = \{ \x \in \mathbb{Z}^n: \| \A \x \|_2^2 \le \alpha^2 \},
\end{equation}
see \cite{ChaG09} and \cite{DamEC03}.
We refer to (\ref{eq:EILSintro}) as the \emph{ellipsoid-constrained integer least squares (EILS) problem}.  To solve the EILS problem, \cite{ChaG09} proposed the LLL reduction in the reduction stage and modified the Schnorr-Euchner search strategy to handle the ellipsoidal constraint in the search stage.  

The contribution of this thesis is two-fold.  The first is to dispel common misconceptions on the reduction process appearing in the ILS literature. 
The second is to present more efficient algorithms to solve the ILS and EILS problem.  The thesis is organized as follows. 

In Chapter \ref{s:itILS},
we review the typical methods to solve an OILS problem.  Specifically, we introduce the LLL reduction method and the Schnorr-Euchner search strategy.

In Chapter \ref{s:GPS}, we discuss the typical methods to solve the quadratic form of the OILS problem.  Our focus will be on LAMBDA reduction and the modified reduction (MREDUCTION) given in \cite{ChaYZ05}.  

According to the literature, there are two goals the reduction process should achieve to make the search process efficient.  One of them is to transform the generator matrix $\A$ in (\ref{eq:OILSintro}) or the covariance matrix $\W_{\hbx}$ in (\ref{eq:qilsintro}) to a matrix which is as close to diagonal as possible.  
A covariance matrix which is close to diagonal means that there is little correlation between its random variables.  
In other words, the goal of the reduction in the GNSS context is to decorrelate the ambiguities as much as possible.
To achieve this goal, the reduction process uses so-called integer Gauss transformations.  
In Chapter \ref{s:misconceptions}, we show that, contrary to common belief, this goal will not make the search more efficient.  We provide a new explanation on the role of integer Gauss transformations in the reduction process.   This new understanding results in modifications to the existing reduction methods.  Numerical simulations indicate that our new algorithms are more efficient than the existing algorithms.  Finally, we discuss another misconception in some GNSS literature where it is believed that the reduction process should reduce the condition number of the covariance matrix.  We provide examples that show that this goal should be discarded.              
   
In Chapter \ref{s:EILS}, we discuss the EILS problem. One drawback with the existing reduction algorithms is that the search time becomes more and more prohibitive as the noise $\v$ in (\ref{eq:linearmodel}) gets larger.  We present a new reduction algorithm which, unlike the LLL reduction, uses the information of the input vector $\y$ and the ellipsoidal constraint.  Then, we provide simulations that show that our proposed approach greatly improves the search process for large noise.   

Finally, we summarize our results and mention our future work in Chapter \ref{s:summary}.                 

We now describe the notation used in this thesis.
The sets of all real and integer $m\times n$ matrices are denoted by $\mathbb{R}^{m\times n}$
and $\mathbb{Z}^{m\times n}$, respectively,
and the set of real and integer $n$-vectors are denoted by $\mathbb{R}^n$
and $\mathbb{Z}^n$, respectively.  
Bold upper case letters and bold lower case letters denote matrices and vectors, respectively.
The identity matrix is denoted by $\I$ and its $i$th column is denoted by $\e_i$.
Superscript $T$ denotes the transpose.  The 2-norm of a vector or a matrix is denoted by $\| \cdot \|_2$. 
MATLAB notation is used to denote a submatrix.
Specifically, if $\A=(a_{ij}) \in \mathbb{R}^{m\times n}$, then
$\A(i,:)$ denotes the $i$th row, $\A(:,j)$ the $j$th column,
and $\A(i_1\!:\!i_2,j_1\!:\!j_2)$ the submatrix formed by rows $i_1$ to $i_2$ and
columns $j_1$ to $j_2$.
For the $(i,j)$ element of $\A$,  we denote it by $a_{ij}$ or $\A(i,j)$.
For the $i$th entry of a vector $\a$, we denote it by $a_i$.
For a scalar $z \in \mathbb{R}$, we use $\lfloor z\rceil$ to denote its nearest integer.
If there is a tie, $\lfloor z\rceil$ denotes the one with smaller magnitude.  The operation sign(z) returns 
$-1$ if $z \le 0$ and $1$ if $z > 0$.
For a random vector $\x \in \mathbb{R}^n$, $\x \sim \mathcal{N}(0,\sigma^2 I)$ means that $\x$ follows a normal distribution with mean $\0$ 
and covariance matrix $\sigma^2 \I$.    
We use i.i.d. to abbreviate ``independently and identically distributed".

\chapter{The OILS problem in the standard form}\label{s:itILS}  
The OILS problem is defined as
\begin{equation}\label{eq:ILS}
\min_{\x \in \mathbb{Z}^n} \| \y-\A \x \|_2^2,
\end{equation}
where $\y \in \mathbb{R}^n$ and $\A \in \mathbb{R}^{m \times n}$ has full column rank.
In this chapter, we review the typical methods to solve the OILS problem. 
A typical approach for solving an OILS problem consists of two stages: reduction and search.  The main goal of the reduction process is to make the search process efficient.  In order to better understand the aims of the reduction, we first introduce the search process and the Schnorr-Euchner search strategy in Section \ref{s:search}.  Then, we present the reduction process and the well-known LLL reduction in Section \ref{s:reduction}.
 
\section{Search process}\label{s:search}

Suppose that after the reduction stage, the OILS problem (\ref{eq:ILS}) is transformed to
\begin{equation}\label{eq:rILS}
\min_{\z \in \mathbb{Z}^n} \| \by-\R \z \|_2^2,
\end{equation}
where $\R \in \mathbb{R}^{n \times n}$ is nonsingular upper triangular and $\by \in \mathbb{R}^{n}$. 
Assume that the solution of (\ref{eq:rILS}) satisfies the bound
\begin{equation*}\label{eq:rILS2}
\| \by-\R \z \|_2^2 < \beta^2,
\end{equation*}
or equivalently
\begin{equation}\label{eq:expand}
\sum_{k=1}^n (\bar{y}_k - \sum_{j=k+1}^n r_{kj}z_j - r_{kk} z_k)^2 < \beta^2.
\end{equation}

Since (\ref{eq:expand}) is a hyper-ellipsoid, we refer to it as a search ellipsoid. 
The goal of the search process is to find an integer point inside the search ellipsoid which minimizes the left-hand side of (\ref{eq:expand}). 
 
Let
\begin{equation}\label{eq:c_k}
c_n = \bar{y}_n/r_{nn}, \quad c_k = (\bar{y}_k - \sum_{j=k+1}^n r_{kj}z_j)/r_{kk}, \quad k=n-1\!:\!-1\!:\!1.
\end{equation}
Note that $c_k$ depends on $z_{k+1}, \ldots, z_n$.
Substituting (\ref{eq:c_k}) in (\ref{eq:expand}), we have
\begin{equation}\label{eq:objf}
\sum_{k=1}^n r_{kk}^2 (z_k - c_k)^2 < \beta^2.
\end{equation}
If $\z$ satisfies the bound, then it must also satisfy inequalities
\begin{align}
& \mbox{level }n : r_{nn}^2 (z_n - c_n)^2 < \beta^2, \label{eq:ineq_n}\\ 
& \quad \vdots \nonumber \\
& \mbox{level }k : r_{kk}^2 (z_k - c_k)^2 < \beta^2 - \sum_{i=k+1}^n r_{ii}^2 (z_i - c_i)^2 \label{eq:ineq_k}\\ 
& \quad \vdots \nonumber \\
& \mbox{level }1 : r_{11}^2 (z_1 - c_1)^2 < \beta^2 - \sum_{i=2}^n r_{ii}^2 (z_i - c_i)^2.  \label{eq:ineq_1}
\end{align}

The search process starts at level $n$ and moves down to level 1.  At level $k$, $z_{k}$ is determined for $k=n\!:\!-1\!:\!1$.  
From (\ref{eq:ineq_k}), the range of $z_k$ is $[l_k,u_k]$, where
\begin{equation*}
l_k = \Big \lceil c_k - (\beta^2 - \sum_{i=k+1}^n r_{ii}^2 (z_i - c_i)^2)^{1/2}/|r_{kk}| \Big \rceil 
\end{equation*} 
and
\begin{equation*}
u_k = \Big \lfloor c_k + (\beta^2 - \sum_{i=k+1}^n r_{ii}^2 (z_i - c_i)^2)^{1/2}/|r_{kk}| \Big \rfloor.
\end{equation*}

There are two typical strategies to examine the integers inside $[l_k,u_k]$.
In the Pohst strategy (see \cite{FinP85}), the integers are chosen in the ascending order
\begin{equation*}
l_k,l_k+1, l_{k}+2, \ldots,  u_k.
\end{equation*}
In the Schnorr-Euchner strategy (see \cite{SchE94}), the integers are chosen in the zig-zag order
\begin{equation}\label{eq:seq}
z_k = \left\{ 
\begin{array}{l l}
  \lfloor c_k \rceil ,\lfloor c_k \rceil-1,
\lfloor c_k \rceil+1,\lfloor c_k \rceil-2,\hdots,\text{ if } c_k \leq \lfloor c_k \rceil\\
\lfloor c_k \rceil ,\lfloor c_k \rceil+1,
\lfloor c_k \rceil-1,\lfloor c_k \rceil+2,\hdots,\text{ if } c_k \geq \lfloor c_k \rceil.
\end{array} \right.
\end{equation} 

Observe that in Schnorr-Euchner strategy, once an integer $z_k$ does not satisfy (\ref{eq:ineq_k}), all the following integers in the sequence will not satisfy it.  These integers can be pruned from the search process.  Such a property does not exist in the Pohst strategy.  Another benefit with the Schnorr-Euchner enumeration order is that the first points examined are more likely to minimize (\ref{eq:objf}) than the last points examined.  As will be seen in the next paragraph, this allows to shrink the search ellipsoid faster.  Simulations in \cite{AgrEVZ02} confirm that the Schnorr-Euchner strategy is more efficient than the Pohst strategy.

We now describe the search process using the Schnorr-Euchner strategy.  At level $n$, we compute $c_n$ by (\ref{eq:c_k}) and set $z_n =  \round{c_n}$.  If (\ref{eq:ineq_n}) is not satisfied, no integer can satisfy (\ref{eq:objf}).  Otherwise, we go to level $n-1$, compute $c_{n-1}$ and set $z_{n-1} = \round{c_{n-1}}$. If (\ref{eq:ineq_k}) does not hold, we go back to level $n$ and choose $z_n$ to be the second nearest integer to $c_n$.  Otherwise, we move down to level $n-2$.  When we reach level $1$, we compute $c_1$ and set $z_1 = \round{c_1}$. Then, if (\ref{eq:ineq_1}) is satisfied, we set $\hbz = [z_1, \ldots, z_n]^T$, where $\hbz$ is a full integer point inside the search ellipsoid. We update $\beta$ by setting $\beta^2 = \sum_{k=1}^n r_{kk}^2 (\hat{z}_k - c_k)^2$.  This step allows to eliminate more points by ``shrinking'' the search ellipsoid.  Now we search for a better point than $\hbz$. If one is found, we update $\hbz$.  We move up to level $2$ and choose $z_2$ to be the next nearest integer to $c_2$, where ``next" is relative to $\hat{z}_2$.  If inequality  (\ref{eq:ineq_k}) holds at level 2, we move down to level 1 and update $z_1$; otherwise, we move up to level 3 and update $z_3$.  The procedure continues until we reach level $n$ and (\ref{eq:ineq_n}) is not satisfied.  The last full integer point found is the OILS solution. 
The described search process is a depth-first search, see an example of a search tree in Fig.\ \ref{fig:searchtree}.  The root node
does not correspond to an element in $\z$, but is used to unite the branches into a tree.
 Note that the integers are enumerated according to order (\ref{eq:seq}) when we move up one level in the search.     
 If the initial value of $\beta$ is 
$\infty$, the first integer point found is called the Babai integer point. 
\begin{figure}
\centering
{\includegraphics[trim = 10mm 140mm 60mm 5mm, clip, scale = 0.5]{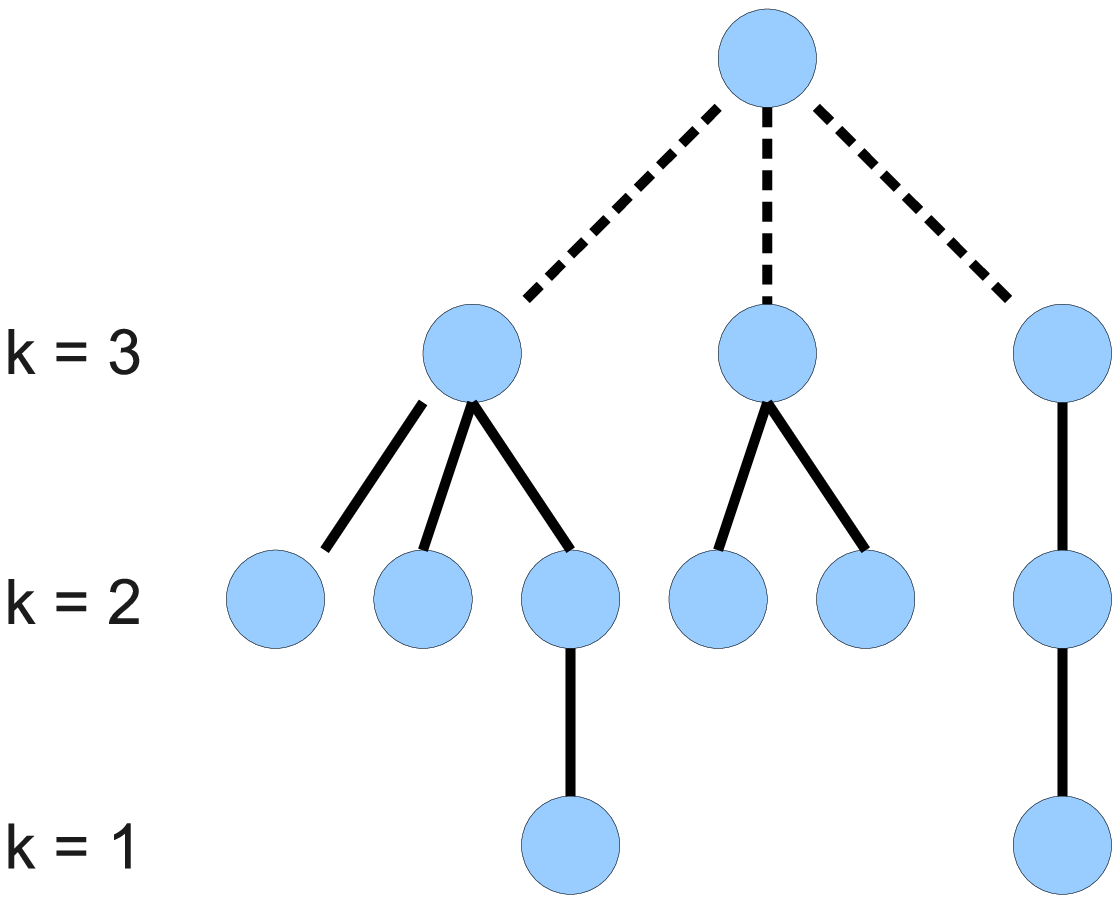}}
\caption{A three-dimensional example of a search tree}
\label{fig:searchtree}
\end{figure}
We describe the process as an algorithm, see \cite{Cha05}.
\begin{algorithm}
\textnormal{(SEARCH)
Given nonsingular upper triangular matrix $\R \in \mathbb{R}^{n \times n}$ and $\by \in \mathbb{R}^{n}$.  The Schnorr-Euchner search algorithm finds the optimal solution to $\min_{\z \in \mathbb{Z}^n} \| \by-\R \z \|_2^2$.
\begin{tabbing}
$\quad$ \= {\bf function}: $\z = \mathrm{SEARCH}(\R, \by)$ \\
\> 1. (Initialization) Set $k=n, \beta = \infty$. \\
\> 2. \= Compute $c_k$ from ($\ref{eq:c_k}$). Set $z_k = \round{c_k}$, $\Delta_k = \mbox{sgn}(c_k - z_k)$ \\
\> 3. (Main step) \\
\>   \> {\bf if } \= $r_{kk}^2 (z_k - c_k)^2 > \beta^2 - \sum_{i=k+1}^n r_{ii}^2 (z_i - c_i)^2$ \\
\>   \>    \>    go to Step 4 \\ 
\>   \> {\bf else if}  $k > 1$ \\ 
\>   \>    \>  $k=k-1$, go to Step 2  \\ 
\>   \>  {\bf else} $\quad$ // case k = 1 \\  
\>   \>     \>  go to Step 5 \\
\>   \>  {\bf end} \\ 
\> 4. (Invalid point) \\
\>    \> {\bf if} \= $k = n $ \\
\>    \>  \>    terminate \\
\>    \> {\bf else } \\
\>    \>    \> $k = k+1$, go to Step 6 \\
\>    \> {\bf end} \\	
\> 5. (Found valid point) \\  
\>   \> Set $\hbz = \z$, $\beta^2 = \sum_{k=1}^n r_{kk}^2 (\hat{z}_k - c_k)^2$  \\
\>    \> $k = k + 1$, go to Step 6 \\   
\> 6. (Enumeration at level k) \\
\>  \> Set $z_k = z_k + \Delta_k$, $\Delta_k = -\Delta_k - \mbox{sgn}(\Delta_k)$ \\
\>  \>  go to Step 3. 
\end{tabbing}
}
\end{algorithm}

\section{Reduction process}\label{s:reduction}

In the reduction process, we transform $\A$ to an upper triangular matrix $\R$ which has properties that make the search process more efficient.
Since (\ref{eq:ILS}) uses the 2-norm, we can apply an orthogonal transformation $\Q^T$ to the left of $\A$ as long as it is also applied to the left of $\y$, i.e., $\|\y - \A \x \|^2_2 = \|\Q^T \y - \Q^T \A \x \|^2_2$. In order to maintain the integer nature of $\x$, the transformation $\Z$ applied to the right of $\A$ must be an integer matrix, whose inverse is also an integer matrix.  It is easy to verify that $|\mbox{det}(\Z)| = 1$, as both 
$\mbox{det}(\Z)$ and $\mbox{det}(\Z^{-1})$ are integers, and $\mbox{det}(\Z)\mbox{det}(\Z^{-1}) = 1$.  Such integer matrices are referred to as unimodular matrices.  

The transformations on $\A$ can be described as a QRZ factorization of $\A$:
\begin{equation}\label{eq:QRZ1}
\Q^T \A \Z = 
\begin{bmatrix}
\R \\
{\bf 0} 
\end{bmatrix}  \quad
\mbox{or } \A = \Q_{1}^T \R \Z^{-1}, 
\end{equation}
where $\Q = [\Q_1, \Q_2] \in \mathbb{R}^{m \times m}$ is orthogonal, $\R \in \mathbb{R}^{n}$ is nonsingular upper triangular and 
$\Z \in \mathbb{Z}^{n \times n}$ is unimodular (see \cite{ChaG09}). Using this factorization,
\begin{equation}\label{eq:QRZ2}
\| \y - \A \x \|_2^2 = \| \Q_1^T \y - \R \Z^{-1} \x \|_2^2 + \| \Q_2^{T} \y \|_2^2. 
\end{equation}
Let 
\begin{equation}\label{eq:QRZ3}
\by = \Q_1^T \y, \quad \z = \Z^{-1} \x.
\end{equation}
Then, we can rewrite (\ref{eq:ILS}) as
\begin{equation}\label{eq:rrILS}
\min_{\z \in \mathbb{Z}^n} \| \by-\R \z \|_2^2.
\end{equation}
If $\hbz$ is the solution of the transformed OILS problem (\ref{eq:rrILS}), then $\hbx = \Z \hbz$ is the solution of the original OILS problem (\ref{eq:ILS}). 

Note that $\R$ in (\ref{eq:QRZ1}) is not unique.  A different $\Z$ usually leads to a different $\R$.  Without loss of generality, we assume that the diagonal entries of $\R$ are positive in this thesis. 
Certain properties of $\R$ can make the search process much more efficient. 
If $\R$ is diagonal, then simply rounding all $c_k$ to the nearest integer gives the optimal solution. In \cite{MurGDC06}, it is claimed that as $\R$
gets closer to a diagonal matrix, the complexity of the search process decreases.  In Section \ref{s:proof}, we show that this claim is not true.  The crucial property that $\R$ must strive for is that $r_{kk}$ should be as large as possible for large $k$.  
We motivate this property by a two dimensional case ($n = 2$). Let $r_{22} \ll r_{11}$.  In the search process, the bound (\ref{eq:ineq_n}) at level 2 is very loose, implying that there are many valid integers $z_2$. However, the bound (\ref{eq:ineq_1}) at level 1 is very tight. Hence, after $z_2$ is fixed, there is a high probability that no valid integer $z_1$ exists.  We must enumerate many integers at level 2 before we can find a valid integer at level 1.  This is the so-called search halting problem (see \cite{Teu95}).  
Note that $\mbox{det}(\A^T\A)= \mbox{det}(\R^T\R) = r_{11}^2 \ldots r_{nn}^2$ is constant, independent of the choice of $\Z$.
Making $r_{kk}$ as large as possible for large $k$ implies making $r_{kk}$ as small as possible for small $k$.   
Hence, we can say that the reduction process should strive for
\begin{equation}\label{eq:rgoal}
r_{11} \ll \ldots \ll r_{nn}.
\end{equation}

The typical reduction method for the OILS problem is the LLL reduction (see \cite{LenLL82} and \cite{HasB98}).  The LLL reduction can be written in the form of a QRZ factorization, where $\R$ satisfies the following criteria
\begin{align}
 |r_{k-1,j}|  & \le \frac{1}{2} r_{k-1,k-1}, \quad  j = k, \ldots, n  \label{eq:LLLprop1}\\
 r_{k-1,k-1} & \le \delta \sqrt{r_{k-1,k}^2 + r_{kk}^2}, \quad 1 \le \delta < 2 \quad \mbox{and} \quad k = 2, \ldots, n. \label{eq:LLLprop2}
\end{align}
Notice that taking $\delta = 1$ is best to strive for (\ref{eq:rgoal}).  In this thesis, we always take $\delta = 1$. The LLL reduction cannot guarantee $r_{11} \le \ldots \le r_{nn}$,  but substituting (\ref{eq:LLLprop1}) into (\ref{eq:LLLprop2}), we see that it can guarantee
\begin{equation*}
r_{k-1,k-1} \le \frac{2}{\sqrt{3}} r_{kk}, \quad k = 2, \ldots, n.
\end{equation*} 

In our implementation of the LLL reduction, we use two types of unimodular transformations to ensure properties (\ref{eq:LLLprop1}) and (\ref{eq:LLLprop2}). 
These are integer Gauss transformations and permutation matrices.  In the following, we present the effect of these transformations on the R factor of the QR factorization of $\A$.   

\subsection{Integer Gauss transformations}\label{s:igt}

Suppose we are given an upper triangular matrix $\R$ with positive diagonal entries.
An integer Gauss transformation (IGT) $\Z_{ij}$ has the following form
\begin{equation}\label{eq:Zij}
\Z_{ij} = \I - \mu \e_i\e_j^T, \qquad \mbox{$\mu$ is an integer}.
\end{equation}
Applying $\Z_{ij}$ ($i<j$) to $\R$ from the right gives
$$
\bbR = \R\Z_{ij}=\R-\mu \R\e_i\e_j^T.
$$
Thus $\bbR$ is the same as $\R$, except that
$$
\bar{r}_{kj}=r_{kj}-\mu r_{ki}, \quad k=1,\ldots,i.
$$
Taking $\mu=\lfloor r_{ij}/r_{ii} \rceil$ ensures that $|\bar{r}_{ij}|\leq \frac{1}{2} r_{ii}$.
Similarly, an IGT $\Z_{ij}$ ($i>j$) can be applied to a unit lower triangular matrix $\L$ from the right to ensure that $|\bar{l}_{ij}| \leq 1/2$.

\subsection{Permutations}\label{s:permutations}
For $\R$ to satisfy (\ref{eq:LLLprop2}) with $\delta=1$, we sometimes need to permute its columns.  In the reduction process, if
$r_{k-1,k-1} > \sqrt{r^2_{k-1,k} + r_{kk}^2},$ we permute columns $k$ and $k-1$.  

\begin{equation*}
\R \P_{k-1,k} =
\begin{bmatrix}
\R_{11} & \bbR_{12} & \R_{13} \\
        & \tR_{22}  & \R_{23}  \\
        &           & \R_{33}     
\end{bmatrix},
\end{equation*}
where
\begin{equation*}
\P_{k-1,k} =
\begin{bmatrix}
\I_{k-2} &    &        \\
         & \P &        \\
         &    & \I_{n-k} 
\end{bmatrix}, \quad \P =
\begin{bmatrix}
0 & 1 \\
1 & 0
\end{bmatrix}, \quad \tR_{22} =
\begin{bmatrix}
r_{k-1,k} & r_{k-1,k-1} \\
r_{kk}    & 0
\end{bmatrix},
\end{equation*}
\begin{equation*}
\bbR_{12} = [\R(1:k-2,k-1), \R(1:k-2,k)].
\end{equation*} 
As a result, $\R$ is no longer upper triangular.  To make it upper triangular again, we apply a Givens rotation $\G$ to zero element $r_{kk}$ in $\tR_{22}$.

\begin{equation*}
\G \tR_{22} = \bbR_{22}, \quad \mbox{or }
\begin{bmatrix}
c & s \\
-s & c
\end{bmatrix}
\begin{bmatrix}
r_{k-1,k} & r_{k-1,k-1} \\
r_{kk}    & 0
\end{bmatrix} =
\begin{bmatrix}
\bar{r}_{k-1,k-1} & \bar{r}_{k-1,k} \\
0                 &   \bar{r}_{kk}
\end{bmatrix},
\end{equation*}
where 
\begin{equation}\label{eq:r_k-1}
\bar{r}_{k-1,k-1} = \sqrt{r_{k-1,k}^2 + r_{kk}^2}, \quad c = \frac{r_{k-1,k}}{\bar{r}_{k-1,k-1}}, \quad s = \frac{r_{kk}}{\bar{r}_{k-1,k-1}}
\end{equation}
\begin{equation}\label{eq:r_kk}
\bar{r}_{k-1,k} = c r_{k-1,k-1}, \quad \bar{r}_{kk} = -s r_{k-1,k-1}.
\end{equation}
Therefore, we have
\begin{equation*}
\Q_{k-1,k} \R \P_{k-1,k} = \bbR =
\begin{bmatrix}
\R_{11} & \bbR_{12} & \R_{13} \\
        & \bbR_{22}  & \bbR_{23}  \\
        &           & \R_{33}     
\end{bmatrix}, \quad \Q_{k-1,k} =
\begin{bmatrix}
\I_{k-2} &    &        \\
         & \G &        \\
         &    & \I_{n-k} 
\end{bmatrix},  
\end{equation*}
\begin{equation*}
\bbR_{23} = \G \R_{23}.
\end{equation*}
After the permutation, inequality $\bar{r}_{k-1,k-1} \le \sqrt{\bar{r}^2_{k-1,k} + \bar{r}_{kk}^2}$ now holds.
While a permutation does not guarantee $\bar{r}_{k-1,k-1} \le \bar{r}_{kk}$, it does guarantee that
\begin{equation*}
\bar{r}_{k-1,k-1} < r_{k-1,k-1} \quad \mbox{ and } \quad \bar{r}_{kk} > r_{kk}.
\end{equation*}  Such a permutation is useful since the diagonal elements of $\R$ are now closer to (\ref{eq:rgoal}).    

\subsection{LLL reduction}\label{s:LLL}
 
The LLL reduction algorithm given in \cite{ChaG09} starts by finding the QR decomposition of $\A$ by Householder transformations, then computes $\by$ and works with $\R$ from left to right. At the $k$th column of $\R$, the algorithm applies IGTs to ensure that $|r_{ik}| < \frac{1}{2} r_{ii}$ for $i=k-1\!:\!-1\!:\!1$.  Then, if inequality (\ref{eq:LLLprop2}) holds, it moves to column $k+1$; otherwise it permutes columns $k$ and $k-1$, applies a Givens rotation to $\R$ from the left to bring $\R$ back to an upper triangular form, simultaneously applies the same Givens rotation to $\by$, and moves to column $k-1$.  We describe our implementation of the LLL reduction as follows (see \cite{ChaG09}).

\begin{algorithm} \label{a:decor}
\textnormal{(LLL Reduction).
Given the generator matrix $\A \in \mathbb{R}^{m \times n}$ and the input vector $\y \in \mathbb{R}^{m}$.
The algorithm returns the reduced upper triangular matrix $\R \in \mathbb{R}^{n \times n}$, the 
unimodular matrix $\Z \in \mathbb{Z}^{n \times n}$, and the vector $\by \in \mathbb{R}^{n}$. 
\begin{tabbing}
$\quad$ \= {\bf function}: $[\R, \Z,\by] = \mathrm{LLL}(\A, \y)$ \\
\> Compute QR factorization of $\A$ and set $\by = \Q_{1}^T\y$ \\
\> $\Z=\I$ \\
\> $k=2$ \\
\> {\bf while} \= $k \le n$  \\
\>            \>	{\bf for} \= $i=k-1:-1:1$ \\
\>            \>        \> \mbox{Apply IGT $\Z_{ik}$ to $\R$, .i.e., $\R=\R \Z_{ik}$} \\
\>            \>        \> \mbox{Update $\Z$, i.e., $\Z= \Z\Z_{ik}$} \\
\>	      \>        {\bf end} \\
\>            \> {\bf if}   $r_{k-1,k-1} > \sqrt{r_{k-1,k}^2 + r_{kk}^2}$ \\
\>	      \>                     \> \mbox{Interchange columns $k$ and $k-1$ of $\R$ and $\Z$} \\ 
\>	      \>	             \> \mbox{Transform $\R$ to an upper triangular matrix by a Givens rotation} \\
\>	      \>                     \> \mbox{Apply the same Givens rotation to $\by$ } \\
\>	      \>		     \> {\bf if} \= $k > 2$ \\
\>            \>                     \> 	\> $k = k - 1$ \\
\>	      \>		     \> {\bf end} \\
\>            \> {\bf else}\\
\>            \>         \> $k=k+1$ \\
\>            \> {\bf end} \\
\> {\bf end}
\end{tabbing}
}
\end{algorithm}

\chapter{The OILS problem in the quadratic form}\label{s:GPS}

A prerequisite for high precision relative GNSS positioning is
to resolve the unknown double differenced cycle ambiguities of the carrier phase data as integers. 
This turns out to be an OILS problem. 
Suppose $\hbx \in \mathbb{R}^n$ is the real-valued least squares estimate of the
integer parameter vector $\x \in \mathbb{Z}^n$ (i.e., the double differenced integer ambiguity vector
in the GNSS context)
and $\W_{\hbx}\in \mathbb{R}^{n\times n}$ is its covariance matrix,
which is symmetric positive definite.
The OILS estimate $\cbx$ is the solution of
the minimization problem:
\begin{equation}\label{eq:qils}
\min_{\x\in \mathbb{Z}^n} (\x-\hbx)^T\W_{\hbx}^{-1}(\x-\hbx).
\end{equation}

Although (\ref{eq:qils}) is in the form of an integer quadratic optimization problem, it is easy to rewrite it in the standard OILS form (\ref{eq:ILS}). 
We refer to (\ref{eq:qils}) as the quadratic form of the OILS problem.  
In Section \ref{s:reductionGPS}, we discuss the reduction process used in the GNSS literature.  In Section \ref{s:lambda},  we review the reduction stage in the LAMBDA (Least-squares AMBiguity Decorrelation Adjustment) method (e.g., \cite{Teu93, Teu95b, Teu95,  Teu98, Teu99}).  In Section \ref{s:mlambda}, we introduce the improvements to the reduction provided by the MLAMBDA  (Modified LAMBDA) method (see \cite{ChaYZ05}).  Finally in Section \ref{s:searchGPS}, we briefly review the search process in the quadratic form of the OILS problem.  

\section{Reduction process}\label{s:reductionGPS}

The reduction step uses a unimodular matrix $\Z$ to transform \eqref{eq:qils} into
\begin{equation}\label{eq:fobjGPS}   
\min_{\z\in \mathbb{Z}^n} (\z-\hbz)^T\W_{\hbz}^{-1}(\z-\hbz),
\end{equation}
where $\z = \Z^T \x$, $\hbz = \Z^T \hbx$ and $ \W_{\hbz} = \Z^T \W_{\hbx} \Z$.  
If $\cbz$ is the integer minimizer of (\ref{eq:fobjGPS}),
then $\cbx = \Z^{-T} \cbz$ is the integer minimizer of (\ref{eq:qils}).
The benefit of the reduction step is that the search in the new optimization problem (\ref{eq:fobjGPS}) can be much more efficient. 
If $\W_{\hbz}$ is a diagonal matrix, then the transformed ambiguities $z_1,\ldots,z_n$  are uncorrelated to each other.  In this case, simply setting $z_i = \round{\hat{z}_i}$, for $i=1:n$, would minimize the objective function.

Let the $\mathrm{\L^T\D\L}$ factorization of $\W_{\hbz}$ be
\be
\W_{\hbz} = \L^T\D\L,
\label{eq:qazldl}
\ee
where $\L$ is unit lower triangular and
$\D=\diag(d_1,\ldots,d_n)$ with
$d_i>0$.
These factors have a statistical interpretation. 
Let $\bz_{i}$ denote the least-squares estimate of $z_i$ when $z_{i+1},\ldots,z_n$ are fixed.
As shown in \cite[p.\ 337]{Teu98},  $d_i$ is the variance of $\bz_{i}$, which is denoted by $\sigma_{\bz_{i}}^{2}$.  Furthermore, $l_{ij} = \sigma_{\hat{z}_i \bz_{j}} \sigma_{\bz_{j}}^{-2}$ for $i > j$, where  $\sigma_{\hat{z}_i \bz_{j}}$ denotes the covariance between $\hat{z}_i$ and $\bz_{j}$.

In the literature (see, e.g., \cite{JonT96}, \cite[p.\ 498]{StrB97} and \cite[p.\ 369]{Teu98}), it is often mentionned that the following two goals should be pursued in the reduction process because they are crucial for the efficiency of the search process:
\begin{enumerate}
\item[(i)] $\W_{\hbz}$ is as diagonal as possible.   From (\ref{eq:qazldl}),
for $i \ne j$, making $\L(i+1\!:\!n,i)$ and $\L(j+1\!:\!n,j)$ closer to $0$ makes $\W_{\hbz}(i,j)$ closer to $0$.
Hence, making the absolute values of the off-diagonal entries of $\L$ as small as possible makes $\W_{\hbz}$ as diagonal as possible.  A covariance matrix which is close to diagonal means that there is little correlation between its random variables.  
In other words, the goal of the reduction is to decorrelate the ambiguities as much as possible.
\item[(ii)] The diagonal entries of $\D$ are distributed in
decreasing order if possible, i.e., one strives for
\begin{equation}\label{eq:order}
d_1 \gg d_{2} \gg \cdots \gg d_n.
\end{equation}
\end{enumerate}  

Note that we want $d_1$ to be as large as possible and $d_n$ to be as small as possible. 
We can show that striving for \eqref{eq:order} is equivalent to striving for \eqref{eq:rgoal}, where $d_{i}$ corresponds to $r_{ii}^{-2}$ for $i=1\!:\!n$.
In the reduction process of the LAMBDA method, the unimodular matrix $\Z$ is constructed by a sequence of integer Gauss transformations and permutations. 
The reduction process starts with the $\mathrm{L^TDL}$ factorization of $\W_{\hbx}$
and updates the factors to give the $\mathrm{L^TDL}$ factorization of  $\W_{\hbz}$.
The main contribution of this thesis is to show that, contrary to common belief,  the first goal will not make the search process more efficient.
While lower triangular integer Gauss transformations are used to make the absolute values of the off-diagonal entries of $\L$ as small as possible, we argue that they are 
useful because they help achieve the second goal.

In \cite{LiuHZO99}, \cite{LouG03} and \cite{Xu01}, instead of (i) and (ii), the condition number of $\W_{\hbz}$ is used to evaluate the reduction process.  In Section \ref{s:conditionNumber}, we show that this criterion can be misleading and that it is not as effective as (ii).         

\subsection{Integer Gauss transformations}\label{s:GPSigt}

Integer Gauss transformations (IGTs) were first introduced in Section \ref{s:igt}.  
We apply $\Z_{ij}$ with $\mu=\lfloor l_{ij} \rceil$  (see (\ref{eq:Zij})) to $\L$ from the right, i.e., $\bbL = \L \Z_{ij}$, 
to make $|\bl_{ij}|$ as small as possible.  This ensures that
\be
|\bl_{ij}|\leq 1/2, \quad i>j.
\label{eq:inequality}
\ee
We use the following algorithm to apply the IGT $\Z_{ij}$ to transform the OILS problem (see \cite{ChaYZ05}).
\begin{algorithm}\label{a:gauss}
\textnormal{(Integer Gauss Transformations).
Given a  unit lower triangular $\L\in \mathbb{R}^\nbn$, index pair $(i,j)$,
$\hbx\in \mathbb{R}^n$ and $\Z\in \mathbb{Z}^\nbn$.
This algorithm applies the integer Gauss transformation $\Z_{ij}$ to $\L$ such that
$|(\L\Z)(i,j)|\leq 1/2$,
then computes $\Z_{ij}^T\hbx$ and $\Z\Z_{ij}$, which overwrite $\hbx$ and $\Z$, respectively.
\begin{tabbing}
$\quad$ \= {\bf function}: $[\L, \hbx, \Z] = \mathrm{GAUSS}(\L, i, j, \hbx, \Z)$ \\
\> $\mu=\lfloor \L(i,j)\rceil$ \\
\> {\bf if} $\mu$ \= $\neq 0$ \\
\> \> $\L(i:n,j)=\L(i:n,j)-\mu \L(i:n,i)$ \\
\> \> $\Z(1:n,j)=\Z(1:n,j)-\mu \Z(1:n,i)$  \\
\> \> $\hat{\x}(j)=\hat{\x}(j)-\mu\hat{\x}(i)$ \\
\> {\bf end} 
\end{tabbing}
}
\end{algorithm}

\subsection{Permutations}\label{s:reorder}
In order to strive for order (\ref{eq:order}), symmetric permutations of the covariance matrix $\W_{\hbx}$ are needed in the reduction.
After a permutation, the factors $\L$ and $\D$ of the $\mathrm{L^TDL}$ factorization have to be updated.

If we partition the $\mathrm{L^TDL}$ factorization of $\W_{\hbx}$ as follows
$$
\W_{\hbx}
=\L^T\D\L
=\bmx \L_{11}^T &  \L_{21}^T & \L_{31}^T \\  & \L_{22}^T & \L_{32}^T \\ & & \L_{33}^T \emx
\bmx \D_1 & \\ & \D_2 & \\ & & \D_3 \emx
\underset{\;\;\;\; k-1 \quad \;\;  2 \;\; \quad n-k-1}
{\bmx \L_{11} \\ \L_{21} & \L_{22} \\ \L_{31} & \L_{32} & \L_{33} \emx}
\hspace{-1mm}\small{\ba{l} k\!-\!1 \\ 2  \\ \small{n\!-\!k\!-\!1} \ea}.
$$
Let
$$
\P=\bmx  0 & 1 \\ 1 & 0\emx, \quad
\P_{k,k+1}=\bmx \I_{k-1} & \\ & \P & \\ & & \I_{n-k-1} \emx .
$$
It can be shown that $\P_{k,k+1}^T\W_{\hbx}\P_{k,k+1}$
has the $\mathrm{L^TDL}$ factorization
\be
\P_{k,k+1}^T\W_{\hbx}\P_{k,k+1}
= \bmx \L_{11}^T & \bbL_{21}^T & \L_{31}^T \\ & \bbL_{22}^T & \bbL_{32}^T \\ & & \L_{33}^T \emx
\bmx \D_1 & \\ & \bbD_2 & \\ & & \D_3 \emx
\bmx\L_{11} & \\ \bbL_{21} & \bbL_{22} & \\ \L_{31} & \bbL_{32} & \L_{33} \emx,
\label{eq:zszldl}
\ee
where
\begin{align}
& \bbD_2=\bmx  \bd_{k} & \\ & \bd_{k+1} \emx, \quad
  \bd_{k+1}= d_{k}+l_{k+1,k}^2 d_{k+1}, \quad \bd_{k} = \frac{d_{k}}{\bd_{k+1}}d_{k+1},
  \label{eq:D2} \\
& \bbL_{22} \equiv \bmx  1 & \\ \bl_{k+1,k} & 1 \emx, \quad
  \bl_{k+1,k} = \frac{d_{k+1}l_{k+1,k}}{\bd_{k+1}},
  \label{eq:L22} \\
& \bbL_{21}=\bmx  -l_{k+1,k} & 1 \\  \frac{d_k}{\bd_{k+1}} & \bl_{k+1,k}\emx \L_{21}
           =\bmx  -l_{k+1,k} & 1 \\  \frac{d_k}{\bd_{k+1}} & \bl_{k+1,k}\emx \L(k\!:\!k+1,1\!:\!k-1),
  \label{eq:L21} \\
& \bbL_{32}= \L_{32}\P=\bmx \L(k+2\!:\!n,k+1) & \L(k+2\!:\!n,1\!:\!k)\emx.
  \label{eq:L32}
\end{align}
We refer to such an operation as a permutation between pair ($k,k+1$).
We describe the process as an algorithm (see \cite{ChaYZ05}).
\begin{algorithm}\label{a:permute}
\textnormal{(Permutations).
Given the $\L$ and $\D$ factors of the $\mathrm{L^TDL}$ factorization of $\W_{\hbx}\in \mathbb{R}^\nbn$,
index $k$, scalar $\delta$ which is equal to $\bd_{k+1}$ in (\ref{eq:D2}),
$\hat \x \in  \mathbb{R}^n$, and  $\Z\in\mathbb{Z}^\nbn$.
This algorithm computes the updated $\L$ and $\D$ factors in (\ref{eq:zszldl})
after $\W_{\hbx}$'s $k$th row and $(k+1)$th row, and $k$th column and $(k+1)$th column are interchanged, respectively.
It also interchanges $\hat \x$'s $k$th entry and $(k+1)$th entry
and $\Z$'s $k$th column and $(k+1)$th column.
\begin{tabbing}
$\quad$ \= {\bf function}: $[\L, \D, \hbx, \Z] = \mbox{PERMUTE}(\L, \D, k, \delta, \hbx, \Z)$ \\
\> $\eta=\D(k,k)/\delta$ \qquad // see (\ref{eq:D2})  \\
\> $\lambda=\D(k+1,k+1)\L(k+1,k)/\delta$ \qquad // see (\ref{eq:L22})  \\
\> $\D(k,k)=\eta \D(k+1,k+1)$ \qquad // see (\ref{eq:D2})  \\
\> $\D(k+1,k+1)=\delta$ \\
\> $\L(k\!:\!k+1,1\!:\!k-1) = \bmx -\L(k+1,k) & 1 \\ \eta & \lambda \emx \L(k\!:\!k+1,1\!:\!k-1)$
          \qquad // see (\ref{eq:L21})   \\
\> $\L(k+1,k)=\lambda$ \\
\> swap columns $\L(k+2\!:\!n,k)$ and $\L(k+2\!:\!n,k+1)$ \qquad // see (\ref{eq:L32})  \\
\> swap columns $\Z(1\!:\!n,k)$ and $\Z(1\!:\!n,k+1)$ \\
\> swap entries $\hbx(k)$ and $\hbx(k+1)$ 
\end{tabbing}
}
\end{algorithm}

\section{LAMBDA reduction}\label{s:lambda}

We now describe the reduction process of the LAMBDA method (see \cite[Sect.\ 3]{JonT96}).
First, it computes the $\mathrm{L^TDL}$ factorization of $\W_{\hbx}$. 
The algorithm starts with column $n-1$ of $\L$.  At column $k$, IGTs are applied to ensure that the absolute values of the entries below the ($k,k$)th entry are as small as possible. 
Then, if $\bd_{k+1} \ge d_{k+1}$ holds (see (\ref{eq:D2})), it moves to column $k-1$; otherwise it permutes pair ($k,k+1$) and moves 
back to the initial position $k=n-1$.
The algorithm uses a variable ($k1$ in Algorithm \ref{a:decor} below) to track down the columns whose off-diagonal entries in magnitude are already bounded above by 1/2 due to previous integer Gauss transformations. We present the implementation given in \cite{ChaYZ05}.

\begin{algorithm} \label{a:decor}
\textnormal{(LAMBDA REDUCTION).
Given the covariance matrix $\W_{\hbx}$ and real-valued LS estimate $\hbx$ of $\x$.
This algorithm computes an integer unimodular matrix $\Z$ and the $\mathrm{L^TDL}$ factorization
$\W_{\hbz}=\Z^T\W_{\hbx}\Z=\L^T\D\L$, where
$\L$ and  $\D$ are updated from the factors of the $\mathrm{L^TDL}$ factorization of $\W_{\hbx}$.
This algorithm also computes $\hbz=\Z^T\hbx$, which overwrites $\hbx$.
\begin{tabbing}
$\quad$\= {\bf function}: $[\Z, \L,\D,\hbx] = \mathrm{REDUCTION}(\W_{\hbx}, \hbx)$ \\
\> Compute the $\mathrm{L^TDL}$ factorization of $\W_{\hbx}$: $\W_{\hbx}=\L^T\D\L$ \\
\> $\Z=\I$ \\
\> $k=n-1$ \\
\> $k1 = k$ \\
\> {\bf while} \= $k>0$  \\
\>	      \> {\bf if} \= $k \le k1$ \\ 
\>            \>	\>  {\bf for} \= $i=k+1\!:\!n$ \\
\>            \>        	\>	\> $[\L, \hbx, \Z] = \mbox{GAUSS}(\L, i, k, \hbx, \Z)$ \\
\>	      \>	\>  {\bf end} \\
\>            \>  {\bf end} \\
\>            \> $\bbD(k+1,k+1)=\D(k,k)+\L(k+1,k)^2\D(k+1,k+1)$  \\
\>            \> {\bf if}   $\bbD$\=$(k+1,k+1)<\D(k+1,k+1)$ \\
\>            \>                     \>   $[\L, \D, \hbx, \Z] = \mbox{PERMUTE}(\L, \D, k, \bbD(k+1,k+1), \hbx, \Z)$ \\
\>	      \>		     \> $k1 = k$ \\
\>	      \>		     \> $k = n-1 $ \\
\>            \> {\bf else}\\
\>            \>         \> $k=k-1$ \\
\>            \> {\bf end} \\
\> {\bf end}
\end{tabbing}
}
\end{algorithm}

When the reduction process is finished, we have 
\begin{align}
|l_{kj}| & \le 1/2, \quad j = 1, \ldots, k-1, \label{eq:lambdag1} \\
d_{k+1}  & \le d_k + l_{k+1,k}^2 d_{k+1},  \quad k = 1,2, \ldots, n-1. \label{eq:lambdag2}
\end{align}
Note that (\ref{eq:lambdag1}) and (\ref{eq:lambdag2}) are the lower-triangular equivalent of the LLL reduction properties (\ref{eq:LLLprop1}) and (\ref{eq:LLLprop2}) with $\delta=1$, respectively. 

\section{MLAMBDA reduction}\label{s:mlambda}

In the MLAMBDA method, several strategies are proposed to reduce the computational complexity of the reduction process.     

\subsection{Symmetric pivoting strategy}\label{s:sp}
In striving for (\ref{eq:order}), LAMBDA reduction performs symmetric permutations of the covariance matrix $\W_{\hbx}$. 
After each permutation, an IGT is needed to update the $\L$ and $\D$ factors of the $\mathrm{L^TDL}$ factorization of $\W_{\hbx}$.
One idea is to apply permutations to $\W_{\hbx}$ before computing its $\mathrm{L^TDL}$ factorization.   
This way, the cost of the IGT associated with a permutation is saved.
Once the $\mathrm{L^TDL}$ factorization is computed, new permutations are usually needed to strive for (\ref{eq:order}).
Nevertheless, this strategy usually reduces the number of permutations done after we have the $\L$ and $\D$ factors. 
First, we show how to compute the $\mathrm{L^TDL}$ of $\W_{\hbx}$ without pivoting.  We partition the $\W_{\hbx} = \L^T \D \L$ as follows
\begin{equation*}
\begin{bmatrix}
\tilde{\W}_{\hbx} & \q \\
\q^T & \q_{nn}
\end{bmatrix} =
\begin{bmatrix}
\tilde{\L}^T & \l \\
             & 1
\end{bmatrix}
\begin{bmatrix}
\tilde{\D} & \\
      & d_n
\end{bmatrix}
\begin{bmatrix}
\tilde{\L} & \\
\l^T & 1
\end{bmatrix}.
\end{equation*}  
We can see that
\begin{equation}\label{eq:ldl}
d_n = q_{nn}, \quad \l = \q/d_n, \quad \tilde{\W}_{\hbx} - \l d_n \l^T = \tilde{\L}^T \tilde{\D} \tilde{\L}.
\end{equation}
We recurse on $\tilde{\W}_{\hbx} - \l d_n \l^T$ to find the complete factorization. 
Now we introduce the symmetric pivoting strategy.  Since we strive for (\ref{eq:order}), we first symmetrically permute the smallest diagonal entry of 
$\W_{\hbx}$ to position $(n,n)$.  With (\ref{eq:ldl}), we compute $d_n$ and $\l$. We continue this procedure with $\tilde{\W}_{\hbx} - \l d_n \l^T$. Finally, we get the 
$\mathrm{L^TDL}$ factorization of a permuted $\W_{\hbx}$.
The implementation of this strategy is described in \cite{ChaYZ05}.
\begin{algorithm}\label{a:sympiv}
\textnormal{($\mathrm{L^TDL}$ factorization with symmetric pivoting).
Suppose $\W_{\hbx} \in \mathbb{R}^\nbn$ is symmetric positive definite.
This algorithm computes a permutation $\P$, a unit lower triangular matrix $\L$
and a diagonal $\D$ such that $\P^T\W_{\hbx}\P=\L^T\D\L$.
The strict lower triangular part of $\W_{\hbx}$ is overwritten by that of $\L$
and the diagonal part of $\W_{\hbx}$ is overwritten by that of $\D$.
\begin{tabbing}
$\quad$ \= $\P=\I_n$ \\
        \> {\bf for}  \= $k=n\!:\!-1\!:\!1$ \\
        \>            \> $q=\arg\min_{1\leq j\leq k} \W_{\hbx}(j,j)$ \\
        \>            \> swap $\P(:,k)$ and $\P(:,q)$ \\
        \>            \> swap $\W_{\hbx}(k,:)$ and $\W_{\hbx}(q,:)$ \\
        \>            \> swap $\W_{\hbx}(:,k)$ and $\W_{\hbx}(:,q)$ \\
        \>            \> $\W_{\hbx}(k,1\!:\!k-1)=\W_{\hbx}(k,1\!:\!k-1)/\W_{\hbx}(k,k)$ \\
        \>            \> $\W_{\hbx}(1\!:\!k\!-\!1,1\!:\!k\!-\!1)=$ \= $\W_{\hbx}(1\!:\!k\!-\!1,1\!:\!k\!-\!1)-\W_{\hbx}(k,1\!:\!k\!-\!1)^T*$\\
        \>            \>   	\> $\W_{\hbx}(k,k)*\W_{\hbx}(k,1\!:\!k\!-\!1)$ \\
\> {\bf end}
\end{tabbing}
}
\end{algorithm}

\subsection{Greedy selection strategy}\label{s:gs}
The reduction process starts with the $\mathrm{L^TDL}$ factorization with pivoting.  In order to further reduce the number of permutations, a greedy selection strategy is proposed.  As shown in Section \ref{s:lambda}, the reduction process of the LAMBDA method permute pairs ($k,k+1$) from right to left.  If for some index $k$, we have $d_{k+1} \gg d_{k}$ and $\bar{d}_{k+1} < d_{k+1}$, then we permute pair ($k,k+1$) and we move to column $k+1$ of $\L$.  Now, it is likely that we also have to permute pair ($k+1,k+2$), and so on.  As a result, it is possible that some of the permutations done before reaching index $k$ are wasted.       
To avoid these unnecessary permutations, instead of looping $k$ from $n-1$ to 1 as in Algorithm \ref{a:decor}, we choose the index $k$ such that 
$d_{k+1}$ decreases most after a permutation for pair ($k,k+1$) is performed.  In other words, we first permute the pairs ($k,k+1$) for which we are most confident of the order.  We define $k$ by
\begin{equation}\label{eq:k}
k = \arg \min_{1 \le j \le n-1} \{ \bar{d}_{j+1}/d_{j+1}: \bar{d}_{j+1} < d_{j+1} \}. 
\end{equation}  
If no $k$ can be found, no more permutations are applied. 

\subsection{Lazy transformation strategy}\label{s:lt}
In LAMBDA reduction, IGTs can be applied to the same entries in $\L$ numerous times.  We now explain how this can occur.  When we permute pair ($k,k+1$), the entries of $\L(k\!:\!k+1,1\!:\!k-1)$ are modified (see (\ref{eq:L22})).  If the absolute values of the entries of $\L(k\!:\!k+1,1\!:\!k-1)$ are bounded above by 1/2 before the permutation, then these bounds may no longer hold after the permutation.  Hence, new IGTs have to be applied.  To avoid this extra work, we want to defer as much IGTs as possible to the end of the reduction process.  From (\ref{eq:D2}), if $d_k < d_{k+1}$, we need $|l_{k+1,k}|$ to be as small as possible to determine the order of the ambiguities.  Therefore, at first, we apply IGTs only on some of the subdiagonal entries of $\L$.  Then, when no more permutations occur, IGTs are applied to all the entries in the strictly lower triangular part of $\L$.  This strategy is called a ``lazy'' transformation strategy in \cite{ChaYZ05}.

\subsection{The reduction algorithm}

We present the modified reduction algorithm (MREDUCTION) given in \cite{ChaYZ05}.  It uses an $(n+1)$-dimensional vector ChangeFlag to track if $l_{k+1,k}$ is modified by the last permutation. 

\begin{algorithm}
\textnormal{(MREDUCTION)
Given the covariance matrix $\W_{\hbx}$ and real-valued LS estimate $\hbx$ of $\x$.
This algorithm computes an integer unimodular matrix $\Z$ and the $\mathrm{L^TDL}$ factorization
$\W_{\hbz}=\Z^T\W_{\hbx}\Z=\L^T\D\L$, where
$\L$ and  $\D$ are updated from the factors of the $\mathrm{L^TDL}$ factorization of $\W_{\hbx}$.
This algorithm also computes $\hbz=\Z^T\hbx$, which overwrites $\hbx$.
\begin{tabbing}
$\quad$ \= {\bf function}: $[\Z, \L,\D,\hbx] = \mathrm{MREDUCTION}(\W_{\hbx}, \hbx)$ \\
	\> Compute the $\mathrm{L^TDL}$ factorization of $\W_{\hbx}$ \\
        \> with symmetric pivoting $\P^T\W_{\hbx}\P=\L^T\D\L$ \\
        \>  $\hbx = \P^T \hbx$  \\  
	\> $\Z = \P$ \\
	\> Set all elements of ChangeFlag(1:n+1) to ones \\
	\> {\bf while}  \= true \\
	\>              \> minratio = 1 \\
	\> 	        \> {\bf for}  \= $k$ = $1:n-1$ \\
	\>		\> 	\> \bf{if } \= $\frac{\D(k,k)}{\D(k+1,k+1)} < 1$ \\
	\>		\>	\>	\>  \bf{if }  \= ChangeFlag$(k+1) = 1$ \\
	\>		\>	\>	\>	\> $[\L, \hbx, \Z] = \mbox{GAUSS}(\L, k+1, k, \hbx, \Z)$  \\
	\>		\>	\>	\>	\>  $\bbD(k+1,k+1)=\D(k,k)+\L(k+1,k)^2\D(k+1,k+1)$ \\
	\>		\>	\>	\>	\>  ChangeFlag$(k+1) = 1$	\\
	\>		\>	\>	\>  \bf{end } \\
	\>		\>	\>	\>  tmp = $\frac{\bbD(k+1,k+1)}{\D(k+1,k+1)}$  \\
	\>		\>	\>	\>  \bf{if } \= tmp $<$ minratio \\
	\>		\>	\>	\>	\>  $i = k \quad$  // see (\ref{eq:k}) \\
	\>		\>	\>	\>	\> minratio = tmp \\
	\>		\>	\>	\>	\> $\tilde{d}  = \bbD(k+1,k+1)$ \\
	\>		\>	\>	\> \bf{end }\\	
	\>		\> 	\> \bf{end } \\
	\>		\> {\bf end} \\
	\>		\> {\bf if} minratio $ = 1$ \\
	\>		\>	\> break while loop \\
	\>		\> \bf{end }\\
	\>		\> $[\L, \D, \hbx, \Z] = \mbox{PERMUTE}(\L, \D, i,\tilde{d}, \hbx, \Z)$  \\
	\>		\> Set ChangeFlag$(i:i+2)$ to ones \\
	\> {\bf end} \\
	\> // Apply IGTs to $\L$'s strictly lower triangular part \\
	\> {\bf for} $k$ = $1:n-1$ \\
	\>	\> {\bf for} $i=k+1:n$ \\
	\>	\>  \>  $[\L, \hbx, \Z] = \mbox{GAUSS}(\L, i, k, \hbx, \Z)$ \\
	\>	\> {\bf end} \\
	\> {\bf end}
\end{tabbing}
}
\end{algorithm}

Numerical simulations in \cite{ChaYZ05} show that MLAMBDA can be much faster than LAMBDA implemented in Delft's LAMBDA package (MATLAB, version 2.0) for 
high dimensional problems. In Section \ref{s:simulations}, our numerical simulations indicate that MREDUCTION can be numerically unstable on some problems.  In these problems, MLAMBDA finds a worse solution to the OILS problem than LAMBDA.

\section{Search process}\label{s:searchGPS}

After the reduction process, the search process starts. 
The critical step in understanding the search process is to rewrite the objective function \eqref{eq:fobjGPS} in terms of a sum-of-squares, similar to (\ref{eq:objf}).  
Substituting the $\mathrm{\L^T\D\L}$ factorization in (\ref{eq:fobjGPS}), we get
\begin{equation}\label{eq:fobjGPS2}   
\min_{\z\in \mathbb{Z}^n} (\z-\hbz)^T\L^{-1}\D^{-1}\L^{-T}(\z-\hbz).
\end{equation}
Define $\bbz$ as
\begin{equation}\label{eq:bbz}
\bbz =  \z - \L^{-T}(\z-\hbz),
\end{equation} 
or equivalently
\begin{equation*}
\L^T(\z - \bbz) = \z -\hbz,
\end{equation*}
which can be expanded to
\begin{equation}\label{eq:seqDefN}
\bz_{j} = \hat{z}_j + \sum_{i=j+1}^n l_{ij}(z_i - \bz_{i}), \quad j = n:-1:1. 
\end{equation}
Observe that $\bz_{j}$ depends on $z_{j+1}, \ldots, z_n$.
With \eqref{eq:bbz}, we can rewrite the optimization problem \eqref{eq:fobjGPS2} as follows
\begin{equation}\label{eq:fobjGPS3}
\min_{\z\in \mathbb{Z}^n} (\z-\bbz)^T\D^{-1}(\z-\bbz),
\end{equation}
or equivalently
\begin{equation}\label{eq:GPSsearch}
\min_{\z \in \mathbb{Z}^n} \sum_{j=1}^n \frac{(z_j - \bz_{j})^2}{d_j}.
\end{equation} 
Assume that the solution of \eqref{eq:GPSsearch} satisfies the bound
\begin{equation}\label{eq:ambspace}
\sum_{j=1}^n \frac{(z_j - \bz_{j})^2}{d_j} < \beta^2.
\end{equation} 
Note that \eqref{eq:ambspace} is a hyper-ellipsoid, which we refer to as an ambiguity search space.
If $\z$ satisfies (\ref{eq:ambspace}), then it must also satisfy inequalities
\begin{align}
& \mbox{level }n : \frac{(z_n - \bz_{n})^2}{d_n} < \beta^2, \nonumber\\ 
& \quad \vdots \nonumber \\
& \mbox{level }k : \frac{(z_k - \bz_{k})^2}{d_k} < \beta^2 - \sum_{i=k+1}^n \frac{(z_i - \bz_{i})^2}{d_i} \label{eq:validintk} \\ 
& \quad \vdots \nonumber \\
& \mbox{level }1 : \frac{(z_1 - \bz_{1})^2}{d_1} < \beta^2 -  \sum_{i=2}^n \frac{(z_i - \bz_{i})^2}{d_i}. \nonumber
\end{align}
The search process starts at level $n$ and moves down to level 1.
From \eqref{eq:validintk}, the range of $z_k$ is $[l_k,u_k]$, where
\begin{equation}
l_k = \Big \lceil \bz_k - d_k^{1/2}(\beta^2 - \sum_{i=k+1}^n (z_i - \bz_{i})^2/d_i)^{1/2} \Big \rceil  \label{eq:lGPS}
\end{equation} 
and
\begin{equation}
u_k = \Big \lfloor \bz_k + d_k^{1/2}(\beta^2 - \sum_{i=k+1}^n (z_i - \bz_{i})^2/d_i)^{1/2} \Big \rfloor. \label{eq:uGPS}
\end{equation}
With the inequalities at each level, the search for the OILS solution can be done with the same procedure shown in Section \ref{s:search}.

\chapter{Reduction Misconceptions and New Reduction Algorithms}\label{s:misconceptions}

Basically there are two communities studying ILS problems: the information theory and communications community and the GNSS community.  
Typically, the former uses the OILS problem in the standard form, while the later uses the quadratic form.
In Section \ref{s:itILS}, we presented the OILS problem in the standard form and the LLL reduction method.  In Section \ref{s:GPS}, we presented the OILS problem in the quadratic form and the LAMBDA reduction and the MREDUCTION methods.
It appears that there are two misconceptions about the reduction process in the literature.
The first  is that the reduction process should decorrelate the covariance matrix
of the real least squares estimate as far as possible, i.e., make the off-diagonal entries
of the covariance matrix as small as possible (see, e.g., \cite{JonT96}, \cite[p.\ 498]{StrB97} and \cite[p.\ 369]{Teu98}). 
This misconception also appears in the communications literature, where it is claimed that the search process will be faster if the reduction process makes the off-diagonal entries of the triangular matrix $\R$ as small as possible (see, e.g., \cite{ChaG09} and \cite{MurGDC06}).  
The second is that the reduction process should reduce the condition number of the the covariance matrix
(see, e.g., \cite{LiuHZO99}, \cite{LouG03} and \cite{Xu01}).
In this Chapter, we show that both are incorrect in Sections \ref{s:proof} and \ref{s:conditionNumber}, respectively.  Our results will provide insight on the role of lower triangular IGTs in the reduction process. In Section \ref{s:plambda}, this new understanding leads us to develop PREDUCTION, a new reduction algorithm which is more efficient and numerically stable than LAMBDA reduction and MREDUCTION. In Section \ref{s:simulations}, we present simulation results.  Finally, in Section \ref{s:miscLLL}, we discuss the implications of these results to the standard form of the OILS problem and to the LLL reduction algorithm.  

\section{Impact of decorrelation on the search process}\label{s:proof}

As seen in Section \ref{s:reductionGPS}, according to the literature,
one of the two goals of the reduction process is to decorrelate the
ambiguities as much as possible.  Decorrelating the ambiguities as
much as possible implies making the covariance matrix as diagonal as
possible, i.e., making the absolute values of the off-diagonal entries of $\L$ as small as
possible.  In the following, we show that to solely
make the absolute values of the off-diagonal entries of $\L$ as small as possible will have 
no impact on the search process. 

\begin{THEOREM}\label{thm:decorrelation}
Given the OILS problem \eqref{eq:qils} and the reduced OILS problem
\eqref{eq:fobjGPS}. If the transformation matrix $\Z$
is a product of lower triangular IGTs, then the search trees for problems
\eqref{eq:qils} and \eqref{eq:fobjGPS} are identical.
\end{THEOREM}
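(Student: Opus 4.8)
The plan is to reduce everything to one structural identity. Each lower triangular IGT $\Z_{ij}$ ($i>j$) is a unit lower triangular integer matrix, so any product $\Z$ of such transformations is again unit lower triangular, integer, and (having determinant $1$) unimodular. First I would write the $\mathrm{L^TDL}$ factorization $\W_{\hbx}=\L_x^T\D_x\L_x$ and substitute it into $\W_{\hbz}=\Z^T\W_{\hbx}\Z=(\L_x\Z)^T\D_x(\L_x\Z)$. Since $\L_x\Z$ is a product of unit lower triangular matrices, it is unit lower triangular, and the uniqueness of the $\mathrm{L^TDL}$ factorization of a symmetric positive definite matrix forces $\D_z=\D_x=:\D$ and $\L_z=\L_x\Z$. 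Thus the diagonal factor, i.e.\ the conditional variances $d_k$, is left untouched by a purely decorrelating $\Z$; this is the crux.

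Next I would track the conditional residuals. Using $\L_z=\L_x\Z$, $\z=\Z^T\x$ and $\hbz=\Z^T\hbx$, I would compute $\L_z^{-T}(\z-\hbz)=\L_x^{-T}\Z^{-T}\Z^T(\x-\hbx)=\L_x^{-T}(\x-\hbx)$. Recalling from \eqref{eq:bbz} that $z_k-\bz_k=[\L_z^{-T}(\z-\hbz)]_k$, with the analogous statement $x_k-\bx_k=[\L_x^{-T}(\x-\hbx)]_k$ for the original problem, taking the $k$-th component gives the pointwise identity $z_k-\bz_k=x_k-\bx_k$ whenever $\z=\Z^T\x$. Because $\L^{-T}$ is upper triangular, this $k$-th component depends only on entries $k,\dots,n$, so the identity is available as soon as the higher-index variables are fixed. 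Writing out the $k$-th row of $\z=\Z^T\x$, namely $z_k=x_k+\sum_{i>k}\Z_{ik}x_i$, and abbreviating the integer shift $S_k:=\sum_{i>k}\Z_{ik}x_i\in\mathbb{Z}$, the residual identity rearranges to $\bz_k=\bx_k+S_k$.

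With these two facts I would set up a bijection between the nodes of the two search trees. A level-$k$ node is a partial assignment of the tail $(z_{k+1},\dots,z_n)$; since the tail of $\Z^T$ is unit upper triangular and integer, $\x=\Z^{-T}\z$ maps it bijectively onto $(x_{k+1},\dots,x_n)$, preserving levels and the parent-child relation. At such a node the Schnorr-Euchner rule enumerates $z_k$ in a zig-zag about $\round{\bz_k}$; because $\bz_k=\bx_k+S_k$ with $S_k$ an integer, one has $\round{\bz_k}=\round{\bx_k}+S_k$ and $\bz_k-\round{\bz_k}=\bx_k-\round{\bx_k}$, so the two enumerations start in the same direction and the $m$-th examined integers obey $z_k^{(m)}=x_k^{(m)}+S_k$, i.e.\ they correspond under the bijection and give equal residuals $z_k^{(m)}-\bz_k=x_k^{(m)}-\bx_k$. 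Since $\D$ is shared, the partial objective $\sum_{i\ge k}(z_i-\bz_i)^2/d_i$ equals $\sum_{i\ge k}(x_i-\bx_i)^2/d_i$ at corresponding nodes, so the validity test \eqref{eq:validintk}, the resulting pruning, and every update of $\beta$ (the found points have identical objective values) agree step for step. Hence the two searches visit corresponding nodes in the same order with the same active/pruned status, and the trees are identical.

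The main obstacle, and the point I would treat most carefully, is the bookkeeping that lets a single global vector identity drive a strictly top-down, one-variable-at-a-time search: I must confirm that $\bz_k$, $\bx_k$ and $S_k$ each depend only on the already-fixed variables $x_{k+1},\dots,x_n$, so that $\bz_k=\bx_k+S_k$ is a legitimate relation at the moment level $k$ is processed, and that the integrality of $S_k$ (which is exactly what makes rounding and the zig-zag order commute with the transformation) is invoked correctly. Once $\Z$ is recognized as unit lower triangular and unimodular, the factorization uniqueness and the residual cancellation are routine.
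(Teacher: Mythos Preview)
Your proof is correct and takes a genuinely different, more structural route than the paper. The paper treats a single IGT $\Z_{kj}$ first, carries out a coordinate-by-coordinate computation (culminating in the chain of equalities that yields $\bx_j-\bz_j=x_k\mu$), and then argues by composition that the same conclusion holds for a product of IGTs. You instead observe at the outset that any product of lower triangular IGTs is unit lower triangular and integer, invoke the uniqueness of the $\mathrm{L^TDL}$ factorization to get $\D_z=\D_x$ and $\L_z=\L_x\Z$ in one stroke, and then derive the global residual identity $\L_z^{-T}(\z-\hbz)=\L_x^{-T}(\x-\hbx)$ by a one-line cancellation. This bypasses the paper's entry-wise bookkeeping and its separate induction over the IGT factors; the integer shift $S_k=\sum_{i>k}\Z_{ik}x_i$ and its dependence only on already-fixed coordinates fall out immediately from the upper-triangular shape of $\Z^T$, whereas the paper recovers the analogous fact for a single IGT through direct computation and then composes. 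What your approach buys is brevity and a clearer conceptual picture (the entire content is that $\Z$ preserves the $\D$-factor and shifts each conditional center by an integer); what the paper's approach buys is a fully explicit, self-contained calculation that does not appeal to factorization uniqueness. Both arguments ultimately rest on the same two facts---unchanged conditional variances and integer-shifted conditional centers---so the Schnorr--Euchner matching step is identical in spirit.
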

\begin{proof}
Let the $\mathrm{L^TDL}$ factorization of $\W_{\hbx}$ and $\W_{\hbz}$ be
\begin{equation*}
\W_{\hbx} = \L^T \D \L, \quad \W_{\hbz} = \bbL^T \bbD \bbL.
\end{equation*} 
As shown in Section \ref{s:reductionGPS}, the OILS problems
(\ref{eq:qils}) and \eqref{eq:fobjGPS} can be written in the form (c.f.\ \ref{eq:GPSsearch})
\begin{equation}\label{f}
f(\x) = \sum_{j=1}^n (x_j - \bx_{j})^2/d_j, \quad
f(\z) = \sum_{j=1}^n (z_j - \bz_{j})^2/\bd_j,
\end{equation}
where
\begin{eqnarray}\label{seqDef}
\bx_{j}  & = & \hat{x}_j + \sum_{i=j+1}^n l_{ij}(x_i - \bx_{i}), \quad \bz_{j} =  \hat{z}_j + \sum_{i=j+1}^n \bl_{ij}(z_i - \bz_{i}).
\end{eqnarray}
We first consider the case where $\Z$ is a single lower triangular IGT $\Z_{kj}$ ($k > j$), which is applied
to $\L$ from the right to make $|l_{kj}|$ as small as possible (see Section \ref{s:GPSigt}).  We have 
\begin{equation*}
\bbL = \L \Z_{kj} = \L - \mu \L \e_k \e_j^T,
\end{equation*}
where the modified entries in $\L$ are
\begin{equation}\label{Lchange}
\bar{l}_{tj} = l_{tj} - \mu l_{tk}, \quad t = k, \ldots, n.
\end{equation}
Let $\hbz = \Z_{kj}^T \hbx$. 
Thus,
\begin{equation}\label{ah}
\hat{z}_i = \left\{
\begin{array}{l l}
 \hat{x}_i, & \quad \text{if $i \ne j,$}\\
 \hat{x}_j - \hat{x}_k \mu, & \quad \text{if $i = j.$}\\
\end{array} \right.
\end{equation}
From \eqref{seqDef} and \eqref{ah}, we know that 
\begin{equation}\label{a}
\bz_{i}  = \bx_{i}, \quad \text{for }i>j.
\end{equation}
Lower triangular IGTs do not affect the $\D$ factor, meaning
\begin{equation}\label{eq:Dfactor}
\bd_i = d_i, \quad \forall i.
\end{equation}
We want to compare the enumerated points in the search process of problems \eqref{eq:qils} and \eqref{eq:fobjGPS}.
The search process starts at level $n$ and moves down to level 1.
When it moves down to level $i$, it chooses $x_i = \round{\bx_{i}}$ and $z_i = \round{\bz_{i}}$.  From \eqref{a} and \eqref{eq:Dfactor},
if the chosen integer $x_i$ is not valid, i.e., it does not satisfy bound \eqref{eq:validintk} at level $i$, then the chosen integer $z_i$ is also not valid.
In this case, the search trees for problems \eqref{eq:qils} and \eqref{eq:fobjGPS} will both move up to level $i+1$.
Therefore, before we reach level $j$ in the search process, we have
\begin{equation*}
z_{i}  = x_{i}, \quad \text{for }i>j.
\end{equation*}
At level $j$, 
\begin{eqnarray}
\bx_{j} - \bz_{j} & = &  \hat{x}_j + \sum_{i=j+1}^n
l_{ij}(x_i - \bx_{i}) - \hat{z}_j - \sum_{i=j+1}^n
\bar{l}_{ij}(z_i - \bz_{i})
\nonumber \\
& = &  \hat{x}_k \mu + \sum_{i=j+1}^n l_{ij}(x_i - \bx_{i}) -
\sum_{i=j+1}^n \bar{l}_{ij}(z_i - \bz_{i}) \quad \text{(using
\eqref{ah})} \nonumber \\
& = &  \hat{x}_k \mu + \sum_{i=j+1}^n (l_{ij} - \bar{l}_{ij})(x_i - \bx_{i}) \quad \text{(using \eqref{a})} \nonumber \\
& = & \hat{x}_k \mu + \sum_{i=k}^n (l_{ij} - \bar{l}_{ij})(x_i - \bx_{i})  \quad \text{(using \eqref{Lchange})} \nonumber \\
& = & \hat{x}_k \mu + \sum_{i=k}^n \mu l_{ik}(x_i - \bx_{i})
\quad \text{(using \eqref{Lchange})} \nonumber \\
& = & \hat{x}_k \mu + \mu l_{kk}(x_k - \bx_{k}) + \sum_{i=k+1}^n
\mu l_{ik}(x_i - \bx_{i}) \nonumber \\
& = & x_k \mu + \mu[\hat{x}_k + \sum_{i=k+1}^n
l_{ik}(x_i - \bx_{i}) -\bx_{k}] \quad \text{(since $l_{kk} = 1$)} \nonumber  \\
& = & x_k \mu \quad \text{(using \eqref{seqDef}).} \label{eq:resultc}
\end{eqnarray}
Since $x_k$ and $\mu$ are integers,  $\bz_{j}$ is an integer distance from $\bx_{j}$. 
This means that if integer $x_j$ is chosen when we move down to level $j$ in the search, then the chosen integer $z_j$ is (see \eqref{eq:resultc})
\begin{equation}
z_j = x_j - x_k \mu. \label{eq:z_j}
\end{equation} 
From \eqref{eq:resultc} and
\eqref{eq:z_j}, we obtain
\begin{equation}\label{eq:leveljs}
z_j - \bz_{j} = x_j - \bx_{j}.
\end{equation}
Using \eqref{ah} and \eqref{eq:leveljs} in \eqref{seqDef}, we get 
\begin{equation*}
\bz_{i} = \bx_{i}, \quad \text{for }i < j.
\end{equation*}
In other words, while $z_j$ and $x_j$ have different values, they have the same impact on the lower levels of the search process.
Hence, the enumerated points in the search process satisfy
\begin{equation*}
z_i = x_i, \quad \text{for }i<j.
\end{equation*}
For each level $i$ in the search process, the enumerated points $\x$ and $\z$ satisfy
\begin{align*}
& z_i = x_i, \quad \forall i \ne j, \\
& z_j = x_j - x_k \mu, \quad \text{for }i = j. 
\end{align*} 
This shows that the search trees 
for problems \eqref{eq:qils} and \eqref{eq:fobjGPS} are identical. 
Consider the case where $\Z$ is a product of lower triangular IGTs, i.e., $\Z = \Z_1, \ldots, \Z_n$, used to make the absolute values of the other off-diagonal entries of $\L$ as small as possible.  As shown, applying $\Z_1$ to \eqref{eq:qils} will transform the ILS problem, but not modify the search tree.  Applying $\Z_2$ to this transformed ILS problem will also not modify the search tree, and so on.
Thus, if $\Z$ is a product of lower triangular IGTs, the search trees for problems \eqref{eq:qils} and \eqref{eq:fobjGPS} are identical.
\end{proof}
Since the search trees are identical, lower triangular IGTs by themselves have no impact on the search process.
Hence, it is not true that the search process is more efficient when the off-diagonal entries of $\L$ are as small as possible.

We provide a 2 by 2 example.
Let the covariance matrix $\W_{\hbx}$ be
\begin{equation*}
\W_{\hbx} =
\left(
\begin{array}{cc}
11026 & 1050 \\
1050 & 100 \\
\end{array}
\right).
\end{equation*}
Its $\L$ and $\D$ factors are
\begin{equation*}
\L =
\left(
\begin{array}{cc}
1 & 0 \\
10.5 & 1 \\
\end{array}
\right), \quad
\D = 
\left(
\begin{array}{cc}
1 & 0 \\
0 & 100 \\
\end{array}
\right).
\end{equation*}
Let the real least-squares estimate be $\hbx = (5.38, 18.34)^T$.
We can make $|l_{21}|$ as small as possible with the following IGT
\begin{equation*}
\Z =
\left(
\begin{array}{cc}
1 & 0 \\
-10 & 1 \\
\end{array}
\right). \\
\end{equation*}
The covariance matrix and its $\L$ factor become
\begin{align*}
\W_{\hbz} = \Z^T \W_{\hbx} \Z =
\left(
\begin{array}{cc}
26 & 50 \\
50 & 100 \\
\end{array}
\right), \quad
\bbL & = \L \Z = \left(
\begin{array}{cc}
1 & 0 \\
0.5 & 1 \\
\end{array}
\right). 
\end{align*}
 
In \cite{Teu98}, the correlation coefficient $\rho$ and the elongation of the search space $e$ are used to quantify the correlation between the ambiguities. The correlation coefficent $\rho$ between random variables $s_1$ and $s_2$ is defined as (see \cite[p.\ 322]{StrB97})
\begin{equation*}
\rho =  \sigma_{s_1 s_2}/\sigma_{s_1} \sigma_{s_2}.   
\end{equation*}
The elongation of the search space $e$ is given by square of the condition number of the covariance matrix (see Section \ref{s:conditionNumber}). 
For the original ambiguities $\x$, we have $\rho = 0.999$ and $e = 1.113 \times 10^3$.   
For the transformed ambiguities $\z$, we have $\rho = 0.981$ and $e = 12.520$.  These measurements indicate that the transformed ambiguities are more decorrelated. 
The points $(x_1,x_2)^T$ and $(z_1,z_2)^T$ encountered during the search process are shown in Table \ref{t:WOD} and \ref{eq:IGTt}, where $-$ indicates that no valid integer is found.  In both cases, the first point encountered is valid, while the others points are invalid.  The OILS solution is $\cbx = (2,18)^T.$
As expected, we observe that the lower triangular IGT did not reduce the number of points encountered in the search process. 
\begin{table}
\caption{Search process without IGT}\label{t:WOD}
\begin{center}
\begin{tabular}[h!]{|c c|}
\hline
$x_1$ & $x_2$ \\
\hline
2  & 18  \\
$-$ & 18 \\
 $-$ & 19 \\
$-$  & 17  \\
$-$ &  20 \\
$-$ & $-$   \\
\hline
\end{tabular}
\end{center}
\end{table}
\begin{table}
\caption{Search process with IGT}\label{eq:IGTt}
\begin{center}
\begin{tabular}{|c c|}
\hline
$z_1$ & $z_2$ \\
\hline
-178  & 18  \\
$-$  & 18  \\
 $-$ & 19 \\
$-$  & 17  \\
$-$ &  20 \\
$-$ & $-$   \\
\hline
\end{tabular}
\end{center}
\end{table}

\subsection{Implications to some reduction strategies}
In \cite{LiuHZO99}, a united ambiguity decorrelation approach is proposed: unlike the usual pairwise decorrelation, all the ambiguities are decorrelated at once.  This allows for faster, but not maximum, ambiguity decorrelation.  Their approach can be divided in two stages: (i) reordering the ambiguities (ii) decorrelating them.  The reduction process can be written as $\M^T\P^T\W_{\hbx}\P \M$, where $\P$ is a permutation matrix and $\M$ is a product of lower triangular IGTs.     In this contribution, we have shown that  stage (ii) will not improve the search process.  This means that the search process would have been identical if the reduction strategy consisted of (i) only.  The united ambiguity decorrelation approach can be iterated until no more decorrelation is possible.  In this case, only the last decorrelation step can be removed. 

In the lazy transformation strategy of the MREDUCTION algorithm (see Section \ref{s:mlambda}), when no more permutations occur in the reduction process, lower triangular IGTs are applied to the off-diagonal entries of $\L$.  Our current understanding shows that these IGTs are unnecessary; removing them reduces the computational cost of the reduction process, without affecting the search process. 

\section{Partial reduction}\label{s:partialDecorrelation}
In the literature, it is often conjectured that when the ambiguities get more decorrelated, the computational cost of the search process decreases. We have already shown that to solely decorrelate the ambiguities by applying lower triangular IGTs to the $\L$ factor of the $\mathrm{L^TDL}$ factorization of the covariance matrix will not help the search process. However, as in LAMBDA reduction, lower triangular IGTs combined with permutations can significantly reduce the cost of the search process.   This indicates that the accepted explanation is, to say the least, incomplete.  We now provide a new explanation on the role of lower triangular IGTs in the reduction process.

We claim that the computational cost of the search depends mainly on the $\D$ factor.  The off-diagonal entries of $\L$ are only 
important when they affect $\D$.  In the reduction process, when we permute pair $(k,k+1)$, $\D$ is modified according to \eqref{eq:D2}.
We strive for (\ref{eq:order}).  In order to make $\bar{d}_{k+1}$ as small as possible, from (\ref{eq:D2}), we observe that $|l_{k+1,k}|$ should be made as small as possible. An example would be helpful to show this.
Let the $\L$ and $\D$ factors of a 2 by 2 covariance matrix $\W_{\hbx}$ be
\begin{equation*}
\L =
\left(
\begin{array}{cc}
1 & 0 \\
0.8 & 1 \\
\end{array}
\right), \quad
\D = 
\left(
\begin{array}{cc}
1 & 0 \\
0 & 100 \\
\end{array}
\right).
\end{equation*}
We have $d_{2} = 100$ and $d_{1} = 1$.  Let the real least-squares estimate be $\hbx = (13.5, 1.2)^T$.
If we permute the two ambiguities without first applying an IGT, 
i.e.,
\begin{equation*}
\Z =
\left(
\begin{array}{cc}
0 & 1 \\
1 & 0 \\
\end{array}
\right), \\ 
\end{equation*}
using \eqref{eq:D2}, we have 
$\bar{d}_{2} = 65$ and $\bar{d}_{1} = 1.54$.  The search process will be more efficient after this transformation because  
$\bd_2 < d_2$ allows more pruning to occur (see \eqref{eq:order}). 
The integer pairs $(z_1,z_2)^T$ encountered during the search are $(2,14)^T, (-,14)^T, (-,13)^T$ and $(-,-)^T$.
The OILS solution is $\cbx = \Z^{-T}(2,14)^T =(14,2)^T.$
However, we can make $\bd_2$ even smaller by applying a lower triangular IGT before the permutation, which means that 
\begin{equation*}
\Z =
\left(
\begin{array}{cc}
0 & 1 \\
1 & -1 \\
\end{array}
\right). \\ 
\end{equation*} 
In this case, we have $\bd_2 = 5$ and $\bd_{1} = 20$.  Now, three and not four integer pairs are encountered during the search, namely $(2,12)^T, (-,12)^T$ and $(-,-)^T$.  The OILS solution is $\cbx = \Z^{-T}(2,12)^T =(14,2)^T.$  This example illustrates how a lower triangular IGT, followed by a permutation, can prune more nodes from the search tree. 

It is useful to make $|l_{k+1,k}|$ as small as possible because of its effect on $\D$.
However, making $|l_{jk}|$ as small as possible, where $j > k+1$, will have no effect on $\D$ since \eqref{eq:D2} only involves $l_{k+1,k}$. 
Hence, even if $|l_{jk}|$ is very large, making it smaller will not improve the search process. This means that making all the off-diagonal entries of $\L$ as close to 0 as possible is unnecessary.  
It is only necessary to make $|l_{k+1,k}|$ as close to 0 as possible before permuting pair ($k,k+1$) in order to strive for \eqref{eq:order}.
We call this strategy a ``minimal'' reduction (MINREDUCTION) strategy. The MINREDUCTION algorithm is exactly like Algorithm \ref{a:decor}, except that the first ``for loop'' is replaced with the following statement: $[\L, \hbx, \Z] = \mbox{GAUSS}(\L, k+1, k, \hbx, \Z)$. 

Large off-diagonal entries in $\L$ indicate that the ambiguities
are not decorrelated as much as possible, which  contradicts the claim that it is one of the goals of the reduction process. 
In the following, we provide a 3 by 3 example which illustrates the issue. 
Let the covariance matrix and the real least-squares estimate of the ambiguity 
vector $\x$ be
\begin{equation*}
\W_{\hbx} =
\left(
\begin{array}{ccc}
 2.8376 &  -0.0265 &  -0.8061 \\
 -0.0265 &  0.7587 &  2.0602 \\
 -0.8061 &  2.0602 &   5.7845
\end{array}
\right),
\quad
\hbx =
\left(
\begin{array}{c}
 26.6917 \\
 64.1662 \\
 42.5485
\end{array}
\right).
\end{equation*}
Let $\psi(\W_{\hbx})$ denote the sum of the absolute values of the correlation coefficients of $\W_{\hbx}$, which is
used to quantify the entire correlation between the ambiguities.
It is defined as follows
\begin{equation*}
\psi(\W_{\hbx}) = \sum_{i,j > i}^n \Big|\W_{\hbx}(i,j)/ \sqrt{\W_{\hbx}(i,i)\W_{\hbx}(j,j)}\Big|.
\end{equation*}
For the original ambiguities $\x$, we have $\psi(\W_{\hbx})$ = 1.2005.  
With LAMBDA reduction or MREDUCTION, the transformation matrix is 
\begin{equation*}
\Z =
\left(
\begin{array}{ccc}
  4 &   -2 &  1 \\
  -43 &  19 &  -11 \\
  16 &   -7 &  4
\end{array}
\right).
\end{equation*}
The covariance matrix and the real least-squares estimate become
\begin{equation*}
\W_{\hbz} = \Z^T\W_{\hbx}\Z =
\left(
\begin{array}{ccc}
0.2282 &   0.0452 &  -0.0009 \\
0.0452 &   0.1232 &  -0.0006 \\
-0.0009 &  -0.0006 &   0.0327
\end{array}
\right), \quad
\z  =  \Z^T \hbx 
    =  
\left(
\begin{array}{c}
-1971.6 \\
867.9 \\
-508.9
\end{array}
\right). 
\end{equation*}
We have $\psi(\W_{\hbz})$ = 0.2889, which indicates that the transformed ambiguities $\z$ are less correlated than the original ambiguities.
With MINREDUCTION, the transformation matrix is
\begin{equation*}
\Z =
\left(
\begin{array}{ccc}
  0 &    0 &  1 \\
  1 &   -3 &  -11 \\
  0 &   1 &  4
\end{array}
\right).
\end{equation*}
The covariance matrix and the real least-squares estimate become
\begin{equation*}
\W_{\hbz} = \Z^T\W_{\hbx}\Z =
\left(
\begin{array}{ccc}
   0.7587 &  -0.2160  & -0.1317 \\
  -0.2160 &   0.2518 &   0.0649 \\
  -0.1317 &   0.0649 &   0.0327
\end{array}
\right), \quad
\z  =  \Z^T \hbx 
    =  
\left(
\begin{array}{c}
64.1662 \\
-149.9499 \\
-508.9418
\end{array}
\right).  
\end{equation*}
Now, we have $\psi(\W_{\hbz})$ = 2.0449, which means that the transformed ambiguities are more correlated than the original ambiguities.

\begin{table}[h!]
\caption{Search process with NOREDUCTION}\label{t:NOREDUCTION}
\begin{center}
\begin{tabular}{|c c c|}
\hline
$z_1$ & $z_2$ & $z_3$ \\
\hline
23 & 64 & 43 \\
$-$ & 64 & 42 \\
27 & 64 & 42 \\
$-$ & 64 & 42 \\
$-$ & $-$ & 44 \\
$-$ & $-$ & 41 \\
$-$ & $-$ & $-$ \\
\hline
\end{tabular}
\end{center}
\end{table}

\begin{table}[h!]
\caption{Search process with LAMBDA reduction or MREDUCTION}\label{t:LAMBDAREDUCTION}
\begin{center}
\begin{tabular}{|c c c|}
\hline
$z_1$ & $z_2$ & $z_3$ \\
\hline
-1972 & 868 & -509 \\
$-$ & 868 & -509 \\
$-$ & $-$ & $-$ \\
\hline
\end{tabular}
\end{center}
\end{table}

\begin{table}[h!]
\caption{Search process with MINREDUCTION}\label{t:MINREDUCTION}
\begin{center}
\begin{tabular}{|c c c|}
\hline
$z_1$ & $z_2$ & $z_3$ \\
\hline
64 & -150 & -509 \\
$-$ & -150 & -509 \\
$-$ & $-$ & $-$ \\
\hline
\end{tabular}
\end{center}
\end{table}

We refer to the reduction process which consists of only finding the $\mathrm{L^TDL}$ factorization of $\W_{\hbx}$ as NOREDUCTION.  
The integer triples encountered during the search when NOREDUCTION, LAMBDA reduction and MINREDUCTION are used are shown in Tables \ref{t:NOREDUCTION}, \ref{t:LAMBDAREDUCTION} and \ref{t:MINREDUCTION}, where $-$ indicates that no valid integer is found.
The ILS solution is $\check{\x} = (27,64,42)^T$.
Observe that MINREDUCTION causes the search to encounter the same number of integer points as LAMBDA reduction.  
Furthermore, NOREDUCTION causes the search to encounter four extra integer points than MINREDUCTION, although the ambiguities transformed by the latter are more correlated than the original ambiguities.
This indicates that it is not true that the reduction process should decorrelate the ambiguities as much as possible in order for the search process to be more efficient. 

The significance of this result is threefold: 
\begin{itemize} 
\item It indicates that contrary to common belief, the computational cost of the search is largely independent of the off-diagonal entries of $\L$
and of the correlation between the ambiguities. 
\item It provides a different explanation on the role of lower triangular IGTs in the reduction process.  
\item It leads to a more efficient reduction algorithm, see Section \ref{s:plambda}.
\end{itemize}

\section{Geometric interpretation}\label{s:geometry}

In Section \ref{s:proof}, we have shown that solely decorrelating the ambiguities will not improve the search process.  We now illustrate this result geometrically.
Let the real LS estimate of $\x$ and the covariance matrix be
\begin{equation*}
\hbx =
\left(
\begin{array}{c}
\hat{x}_1 \\
\hat{x}_2 \\
\end{array}
\right),
\quad
\W_{\hbx} =
\left(
\begin{array}{cc}
\sigma_{\hat{x}_1}^2 & \sigma_{\hat{x}_1 \hat{x}_2}  \\
\sigma_{\hat{x}_2 \hat{x}_1} & \sigma_{\hat{x}_2}^2  \\
\end{array}
\right).
\end{equation*}
We assume that $|\sigma_{\hat{x}_1 \hat{x}_2}| > \frac{1}{2} \sigma_{\hat{x}_1}^2$; otherwise, no further decorrelation is possible. 
From \eqref{eq:ambspace}, we observe that the ambiguity search space is centered at $\hbx$.
To decorrelate the two ambiguities, we use the following IGT
\begin{equation*}
\Z =
\left(
\begin{array}{cc}
1 & 0 \\
-[\sigma_{\hat{x}_1 \hat{x}_2} \sigma_{\hat{x}_1}^{-2}] & 1 \\
\end{array}
\right).
\end{equation*}
Note that the $\Z$-transformation reduces $\sigma_{\hat{x}_1}^2$ but does not affect $\sigma_{\hat{x}_2}^2$ in $\W_{\hbx}$. 
Therefore, as explained in \cite[p.\ 365]{Teu98}, the result of this transformation is to push the vertical tangents of the ambiguity search space, where the vertical axis is $\hat{x}_2$ and the horizontal axis $\hat{x}_1$ (see Fig.\ \ref{fig:SearchSpace}).
Since $|\mbox{det}(\Z)| = 1$, a $\Z$-transformation does not change the area of the search space.  

\begin{figure}[h!]
\centering
\scalebox{1}{\includegraphics*[105,560][580,715]{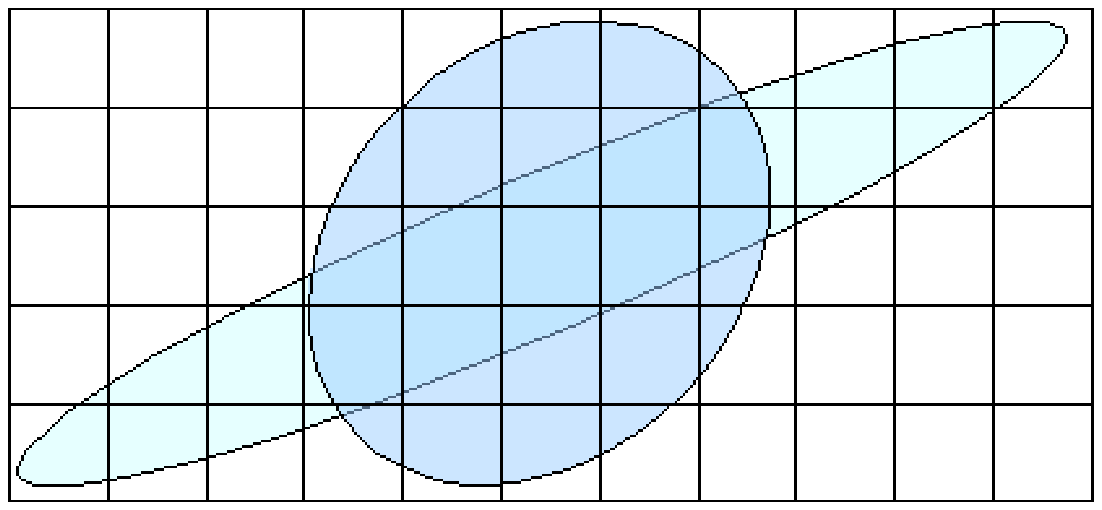}}
\caption{Original and transformed search space.  The transformed search space is less elongated. }
\label{fig:SearchSpace}
\end{figure}

In Fig.\ \ref{fig:SearchSpace}, we observe that if the integer at level 2 is determined (vertical axis), the number of valid integers at level 1 (horizontal axis) is the same in the orginal and transformed search space.  
We prove this result as follows.  
In the search process, the lower bound $l_k$ and the upper bound $u_k$ of the valid integers at a given level $k$ are given by \eqref{eq:lGPS} and \eqref{eq:uGPS}.  In the proof of Theorem \ref{thm:decorrelation}, we
have shown that after applying an IGT $\Z_{jk}$ for $j > k$, we have 
\begin{equation*}
\frac{(x_i - \bx_{i})^2}{d_i}= \frac{(z_i - \bz_{i})^2}{d_i}, \quad \forall i > k.
\end{equation*}
We have also shown that $\bz_{k}$ is an integer distance $\delta$ of $\bx_{k}$.
These results imply that interval $[l_k,u_k]$ for $x_k$ and interval $[\bar{l}_k,\bar{u}_k]$ for $z_k$ 
satisfy (see \eqref{eq:lGPS} and \eqref{eq:uGPS})
\begin{equation*}
[\bar{l}_k,\bar{u}_k] = [l_k + \delta,u_k + \delta].
\end{equation*} 
Hence in the search process, the number of valid integers $x_k$ and the number of valid integers $z_k$ are the same.  
In other words, lower triangular IGTs by themselves do not reduce search halting (see Sect.\ \ref{s:reduction}).

In the literature (see, e.g., \cite{LiuHZO99} and \cite{Teu97}), it is commonly stated that the elongated shape of the ambiguity search space (see \eqref{eq:ambspace}), which is an hyper-ellipsoid, causes the search to be highly inefficient.  It is then said that the role of the $\Z$-transformation is to make the search space less elongated. We now clarify this explanation.  
The elongation of the search space is defined to be the ratio of the major and minor principal axes.
While a lower triangular IGT will not improve the search, it will, however, reduce the elongation of the search space (see Fig.\ \ref{fig:SearchSpace} and the example given in Sect.\ \ref{s:proof}).  This is explained by the fact that changes in the principal axes can occur without changes in the conditional variances. This shows that using the elongation of the search space to evaluate the effectiveness of the reduction can be misleading.
See Section \ref{s:conditionNumber} for an example and the relationship between the elongation of the search space and the condition number of the covariance matrix.

\section{A new reduction method}\label{s:plambda}

This new understanding on the role of IGTs in the reduction process led us to a new algorithm: Partial Reduction (PREDUCTION).  
The motivation of PREDUCTION is to eliminate the unnecessary IGTs applied in LAMBDA reduction. 
Our results indicate that lower triangular IGTs have two roles in the reduction process.
\begin{description}
\item[Efficiency for search.] 
In Section \ref{s:partialDecorrelation}, we showed that if pair $(k,k+1)$ will be permuted, then we first need to make $|l_{k+1,k}|$ as small as possible in order to improve the search efficiency.
\item[Stability for reduction.] 
When only making $|l_{k+1,k}| \le 1/2$,  it is possible that large entries will appear in $\L(k+2\!:\!n,k)$.  This effect may accumulate during the reduction process and introduce huge numbers, which can cause serious rounding errors.    This problem does not occur if every time after we reduce $|l_{k+1,k}|$, we also reduce  $|l_{k+2,k}|,\ldots,|l_{nk}|$.  
\end{description}
The MINREDUCTION strategy mentioned in Section \ref{s:partialDecorrelation} did not include the IGTs that are necessary to ensure numerical stability. 
This means that on some ILS problems, MINREDUCTION yields a different and worse ILS solution than LAMBDA reduction. 
The PREDUCTION algorithm can be summarized as follows.  
It starts with column $n-1$ of $\L$.  At column $k$, it computes the new value of $l_{k+1,k}$ if an IGT were to be applied.  
Then, using this new value, if $\bd_{k+1} \ge d_{k+1}$ holds (see (\ref{eq:D2})), then permuting pair ($k,k+1$) will not help strive for \eqref{eq:order}, therefore it moves to column $k-1$ without applying any IGT; otherwise it
first applies IGTs to make $|l_{ik}| \le 1/2 \mbox{ for } i = k+1\!:\!n$, then it permutes pair ($k,k+1$) and moves to column $k+1$.     
In the LAMBDA reduction algorithm (see Sect.\ \ref{s:lambda}), when a permutation occurs at column $k$, the algorithm restarts, i.e., it goes back to the initial position $k=n-1$.
From (\ref{eq:zszldl}),  no new permutation occurs in the last columns $n-k-1$ of $\L$.  Hence, the new algorithm does not restart, but simply moves to column $k+1$.
In order to further reduce the computational costs, we use the symmetric pivoting strategy presented in Section \ref{s:sp}.  
 
We now present the complete PREDUCTION algorithm:
\begin{algorithm} \label{a:pdecor}
\textnormal{(PREDUCTION).
Given the covariance matrix $\W_{\hbx}$ and real-valued LS estimate $\hbx$ of $\x$.
This algorithm computes an integer unimodular matrix $\Z$ and the $\mathrm{L^TDL}$ factorization
$\W_{\hbz}=\Z^T\W_{\hbx}\Z=\L^T\D\L$, where
$\L$ and  $\D$ are updated from the factors of the $\mathrm{L^TDL}$ factorization of $\W_{\hbx}$.
This algorithm also computes $\hbz=\Z^T\hbx$, which overwrites $\hbx$.
\begin{tabbing}
$\quad$\= {\bf function}: $[\Z, \L,\D,\hbx] = \mathrm{PREDUCTION}(\W_{\hbx}, \hbx)$ \\
\> Compute the $\mathrm{L^TDL}$ factorization of $\W_{\hbx}$ \\
\> with symmetric pivoting $\P^T\W_{\hbx}\P=\L^T\D\L$ \\
\> $\hbx = \P^T \hbx$  \\ 
\> $\Z=\P$ \\
\> $k=n-1$ \\
\> $k1 = k$ \\
\> {\bf while} \= $k>0$  \\
\>            \>  $l = \L(k+1,k) - \round{\L(k+1,k)} \L(k+1,k+1)$  \\
\>            \> $\bbD(k+1,k+1)=\D(k,k)+l^2\D(k+1,k+1)$  \\
\>            \> {\bf if} \=  $\bbD$\=$(k+1,k+1) < \D(k+1,k+1)$ \\
\>	      \>	   	\>  {\bf if} $k \le k1$  \\
\>            \>	 	\>	\> {\bf for} \= $i=k+1:n$ \\
\>            \>              	\>	\> 	\> // See Alg.\ \ref{a:gauss} \\
\>            \>              	\>	\> 	\> $[\L, \hbx, \Z] = \mbox{GAUSS}(\L, i, k, \hbx, \Z)$  \\
\>            \>  		\>	\> {\bf end} \\
\>	      \>		\> {\bf end} \\
\>            \>                     \> // See Alg.\ \ref{a:permute} \\
\>            \>                     \>   $[\L, \D, \hbx, \Z] = \mbox{PERMUTE}(\L, \D, k, \bbD(k+1,k+1), \hbx, \Z)$ \\
\>	      \>		     \> $k1 = k$ \\
\>	      \>		     \> {\bf if} \= $k < n-1$ \\
\>            \>                     \> 	\> $k = k + 1$ \\
\>	      \>		     \> {\bf end} \\
\>            \> {\bf else}\\
\>            \>         \> $k=k-1$ \\
\>            \> {\bf end} \\
\> {\bf end}
\end{tabbing}
}
\end{algorithm}

Note that our final $\L$ might not be LLL-reduced since we do not ensure property \eqref{eq:lambdag1}. 
Consider PREDUCTION without the symmetric pivoting strategy.
Then, Theorem \ref{thm:decorrelation} implies that PREDUCTION will have the same impact on the search process as LAMBDA reduction. With the symmetric pivoting strategy, the initial ordering of the columns $\L$ is different than in LAMBDA reduction.  For this reason, it is no longer true that the search process will be identical.  Nevertheless, we do not expect significant differences in the computational cost, which is confirmed by simulations 
(see Sect.\ \ref{s:simulations}). 
Unlike MREDUCTION, PREDUCTION ensures that we do not create large off-diagonal entries in $\L$ during the reduction process.
This is necessary to avoid serious rounding errors.  

In the following, we give a 3 by 3 example to illustrate the differences between LAMBDA reduction, MREDUCTION and PREDUCTION. 
Let the covariance matrix and the real least-squares estimate of the ambiguity 
vector $\x$ be
\begin{equation*}
\W_{\hbx} =
\left(
\begin{array}{ccc}
1.3616 & 1.7318 & 0.9696 \\
1.7318 & 2.5813 & 1.4713 \\
0.9696 & 1.4713 & 0.8694
\end{array}
\right),
\quad
\hbx =
\left(
\begin{array}{c}
27.6490 \\
10.3038 \\
5.2883
\end{array}
\right).
\end{equation*}

For this example, we get the same transformation matrix from LAMBDA reduction and MREDUCTION, which is
\begin{equation*}
\Z =
\left(
\begin{array}{ccc}
-1 & 1 & 0 \\
2 & 0 & 1 \\
-2 & -1 & -2
\end{array}
\right).
\end{equation*}
The covariance matrix and the real least-squares estimate become
\begin{equation*}
\W_{\hbz} = \Z^T\W_{\hbx}\Z =
\left(
\begin{array}{ccc}
0.3454 & -0.0714 & 0.0200 \\
-0.0714 & 0.2918 & 0.0601 \\
0.0200 & 0.0601 & 0.1738
\end{array}
\right), \quad
\z  =  \Z^T \hbx 
    =  
\left(
\begin{array}{c}
-17.6179 \\
22.3607 \\
-0.2727
\end{array}
\right). 
\end{equation*}

The $\L$ and $\D$ factors of the $\mathrm{L^TDL}$ factorization of $\W_{\hbz}$ are
\begin{equation*}
\L =
\left(
\begin{array}{ccc}
1 & 0 & 0 \\
-0.2889 & 1 & 0 \\
 0.1149 & 0.3459 & 1
\end{array}
\right), \quad
\D =
\left(
\begin{array}{ccc}
0.3205 & 0 & 0\\
0 & 0.2710 & 0 \\
0 & 0 & 0.1738
\end{array}
\right).
\end{equation*}
The integer triples encountered during the search are shown in Table \ref{t:LAMBDA}, where $-$ indicates that no valid integer is found.
The last full integer point found is $\z = (-18,23,0)^T$.
The integer least-squares solution $\check{\x}$ for the original ambiguities is $\check{\x} =  \Z^{-T}\z = (28,10,5)^T$.
\begin{table}[h!]
\caption{Search process with LAMBDA reduction or MREDUCTION}\label{t:LAMBDA}
\begin{center}
\begin{tabular}{|c c c|}
\hline
$z_1$ & $z_2$ & $z_3$ \\
\hline
-17 & 22 & 0 \\
 $-$  & 22 & 0 \\
-18 & 23 & 0 \\
$-$ & $-$ & $-$ \\
\hline
\end{tabular}
\end{center}
\end{table}
Now, we compare the results with our PREDUCTION method.  The transformation matrix is 
\begin{equation*}
\Z =
\left(
\begin{array}{ccc}
0 & 1 & 0 \\
0 & 0 & 1 \\
1 & -1 & -2
\end{array}
\right)
\end{equation*}
The covariance matrix and the real least-squares estimate become
\begin{equation*}
\W_{\hbz} = \Z^T\W_{\hbx}\Z =
\left(
\begin{array}{ccc}
0.8694 & 0.1002 & -0.2676 \\
0.1002 & 0.2918 & 0.0601 \\
-0.2676 & 0.0601 & 0.1738
\end{array}
\right), \quad
\z  =  \Z^T \hbx 
    =  
\left(
\begin{array}{c}
5.2883 \\
22.3607 \\
-0.2727
\end{array}
\right). 
\end{equation*}
The $\L$ and $\D$ factors of the $\mathrm{L^TDL}$ factorization of $\W_{\hbz}$ are
\begin{equation*}
\L =
\left(
\begin{array}{ccc}
1 & 0 & 0 \\
0.7111 & 1 & 0 \\
-1.5392 & 0.3459 & 1
\end{array}
\right), \quad
\D =
\left(
\begin{array}{ccc}
0.3205 & 0 & 0\\
0 & 0.2710 & 0 \\
0 & 0 & 0.1738
\end{array}
\right).
\end{equation*}
The integer triples encountered during the search are shown in Table \ref{t:PLAMBDA}. 
The last full integer point found is $\z = (5,23,0)^T$.
The integer least-squares solution $\check{\x}$ for the original ambiguities is $\check{\x} =  \Z^{-T}\z = (28,10,5)^T$.
Notice that in this example, the search process is identical whether the $\Z$-transformation comes from LAMBDA reduction, MREDUCTION or PREDUCTION.
\begin{table}[h!]
\caption{Search process with PREDUCTION}\label{t:PLAMBDA}
\begin{center}
\begin{tabular}{|c c c|}
\hline
$z_1$ & $z_2$ & $z_3$ \\
\hline
5 & 22 & 0 \\
 $-$  & 22 & 0 \\
5 & 23 & 0 \\
$-$ & $-$ & $-$ \\
\hline
\end{tabular}
\end{center}
\end{table}
     
\section{Numerical simulations}\label{s:simulations}

We implemented the PREDUCTION method given in Section \ref{s:plambda}. We did numerical simulations
to compare its running time with LAMBDA reduction and MREDUCTION.  
All our computations were performed in MATLAB 7.9 on a Pentium-4, 2.66 GHz machine with 501 MB memory running Ubuntu 8.10.

\subsection{Setup}
We performed simulations for different cases. Cases 1-8 are test examples given in \cite{ChaYZ05}. With the exception of case 9, the real vector $\hat \x$ was constructed as follows:
\begin{equation}
\hat \x=100*\texttt{randn}(n,1),
\end{equation}
where $\texttt{randn}(n,1)$ is a MATLAB built-in function to generate a vector of $\n$
random entries which are normally distributed.

The first four cases are based on $\W_{\hbx} = \L^T\D\L$ where
$\L$ is a unit lower triangular matrix with each $l_{ij}$ (for $i>j$) being a random
number generated by $\texttt{randn}$,
and $\D$ is generated in four different ways:
\begin{itemize}
\item Case 1: $\D = \diag(d_i), \; d_i = \texttt{rand}$, where $\texttt{rand}$
is a MATLAB built-in function to generate uniformly distributed random numbers
in $(0,1)$.
\item Case 2: $\D = \diag(n^{-1},(n-1)^{-1}, \ldots,1^{-1})$.
\item Case 3: $\D = \diag(1^{-1}, 2^{-1}, \ldots, n^{-1})$.
\item Case 4: $\D = \diag(200, 200, 200, 0.1, 0.1, \ldots, 0.1)$.
\end{itemize}

The last five cases are as follows:

\begin{itemize}
\item Case 5: $\W_{\hbx}=\U\D\U^T$, $\U$ is a random orthogonal matrix
obtained by the QR factorization of a random matrix generated by $\texttt{randn}(n,n)$,
$\D = \diag(d_i), \; d_i = \texttt{rand}$.

\item Case 6: $\W_{\hbx}=\U\D\U^T$, $\U$ is generated in the same way as in Case 5,
$d_1 = 2^{-\frac{n}{4}}$, $d_n = 2^{\frac{n}{4}}$, other diagonal elements of $\D$
is randomly distributed between $d_1$ and $d_n$, $n$ is the dimension of $\W_{\hbx}$.
Thus the condition number of $\W_{\hbx}$ is $2^{\frac{n}{2}}$

\item Case 7: $\W_{\hbx}=\A^T\A$, $\A=\texttt{randn}(n,n)$.

\item Case 8: $\W_{\hbx}=\U\D\U^T$, the dimension of $\W_{\hbx}$ is fixed to 20,
$\U$ is generated in the same way as in Case 5, $d_1 = 2^{-\frac{k}{2}}$, $d_n = 2^{\frac{k}{2}}$,
other diagonal elements of $\D$ are randomly distributed between $d_1$ and $d_n$, $k = 5,6,\ldots,20$.
Thus the range of the condition number of $\W_{\hbx}$ is  from $2^5$ to $2^{20}$.

\item Case 9:  We assume the linear model
\begin{equation*}
\y = \A \x + \v,
\end{equation*}
where $\A=\texttt{randn}(2n,n)$, $\x = \round{100*\texttt{randn}(n,1)}$ and $\v \sim \mathcal{N}(\0,0.05 \I)$.
Then, we solve the following OILS problem
\begin{equation*}
\min_{\x \in \mathbb{Z}^n} \| \y-\A \x \|_2^2,
\end{equation*} 
which we can rewrite in terms of \eqref{eq:qils}; see \eqref{eq:dev}.
\end{itemize}

Case 4 is motivated by the fact that the covariance matrix $\W_{\hbx}$ in GNSS usually has
a large gap between the third conditioned standard deviation and the forth one
(see \cite[Sect.\ 8.3.3]{Teu98}).  The motivation for case 9 is that in typical GNSS applications, the variance of the noise vector $\v$ is small.
For the reduction process, we took dimensions $n=5\!:\!40$ and performed 40 runs for the all cases.
For the search process, we took dimensions $n=5\!:\!30$ and performed 40 runs for all cases.
The results about the average running time (in seconds) are given in Figs.\ \ref{f:case1} to \ref{f:case9}.
For each case, we give two plots, corresponding to the average reduction time and the average search time, respectively.
Note that $\Z^T \W_{\hbx} \Z = \L^T \D \L$.  Thus, $\W_{\hbx} = \Z^{-T} \L^T \D \L \Z^{-1}$ is a factorization of $\W_{\hbx}$.
We use the relative backward error to check the numerical stability of the factorization, which is 
\begin{equation*}
\frac{\| \W_{\hbx} - \Z^{-T}_c \L^T_c \D_c \L_c \Z^{-1}_c \|_2}{\| \W_{\hbx} \|_2},
\end{equation*}
where $\Z_c, \L_c$ and $\D_c$ are the computed values of $\Z,\L$ and $\D$.
The results for the three reduction algorithms are displayed in Figs.\ \ref{f:case1e} to \ref{f:case9e}. 

\subsection{Comparison of the reduction strategies}
From the simulation results, we observe that PREDUCTION improves the computational efficiency of the reduction stage for all cases.  
Usually, the improvement becomes more significant when the dimension $n$ increases.
For example, in Case 3, PREDUCTION has about the same running time as MREDUCTION and LAMBDA reduction when $n=5$, but is almost 10 times faster when $n = 40$. 
  
Below, we show that the MREDUCTION is not numerically stable.  For this reason, we did not compare the effectiveness of MREDUCTION with the other reduction algorithms. With LAMBDA reduction and PREDUCTION,  we obtain the same computed solution for the same OILS problem.

In Section \ref{s:plambda}, we showed that PREDUCTION without the symmetric pivoting strategy and LAMBDA reduction have exactly the same impact on the search process.  With the symmetric pivoting strategy, PREDUCTION and LAMBDA reduction can give different $\D$ factors.  The $\D$ factor which satisfies order \eqref{eq:order} better depends on the specific OILS problem.     
Nevertheless, Figs.\ \ref{f:case1} to \ref{f:case8} show that there is no significant difference in the search process whether we use PREDUCTION or the LAMBDA reduction.   

In our simulations, we found that MREDUCTION sometimes causes the search to find a different and worse OILS solution than if LAMBDA reduction or PREDUCTION was used. For instance, this occured twice out of 180 runs for case 7 at $n = 35$.  For these problems, the relative backward error of MREDUCTION was in the order of $10^2$ and $10^{12}$, while the relative backward error of LAMBDA reduction and PREDUCTION was in the order of $10^{-14}$.   
Observe that the relative backward error of MREDUCTION is particularly large for cases 2 and 7.  
This means that the MREDUCTION algorithm is not backward stable.   The stability problem is caused by the lazy transformation strategy (see Section \ref{s:mlambda}).  The deferred IGTs in the reduction process can cause large off-diagonal entries in $\L$ to appear, which can lead to big rounding errors. Such an issue is avoided in PREDUCTION.
For all the cases, the relative backward error in PREDUCTION is less than in LAMBDA reduction and in MREDUCTION.  For some cases, the difference is of several orders of magnitude.  For example, for case 3 at $n = 40$, the relative backward error in LAMBDA reduction and MREDUCTION is around $10^{-12}$, while it is $10^{-16}$ with PREDUCTION. This indicates that PREDUCTION is more computationally efficient and stable than both MREDUCTION and LAMBDA reduction. 
In some applications, the computational cost of the search process can be prohibitive. In order to reduce the search time, one might opt to find an approximate OILS solution.  In these cases, the savings in the reduction time provided by PREDUCTION become particularly important.    

\begin{figure}[ht!]
\centering
{\includegraphics[scale=0.80]{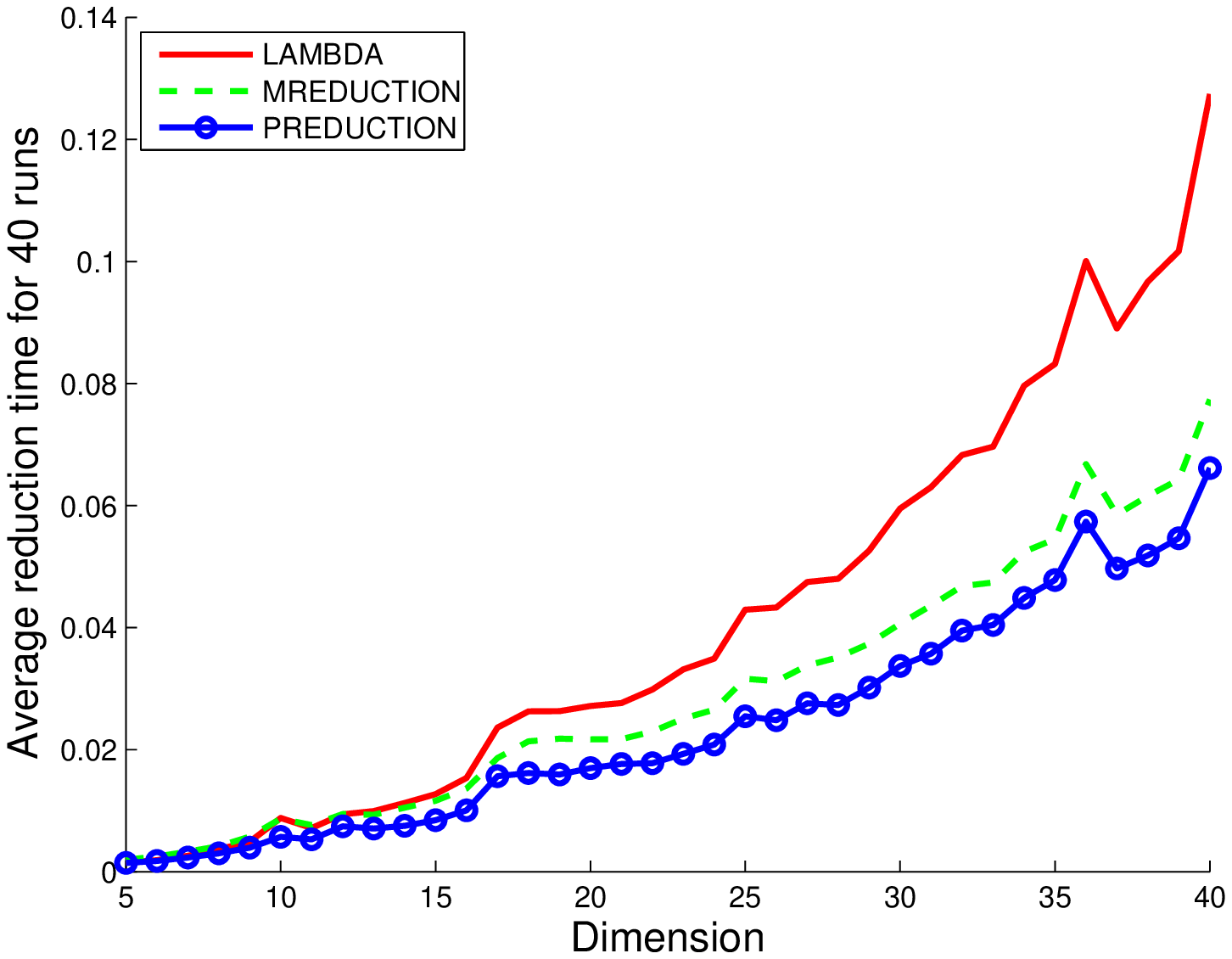}}
{\includegraphics[scale=0.80]{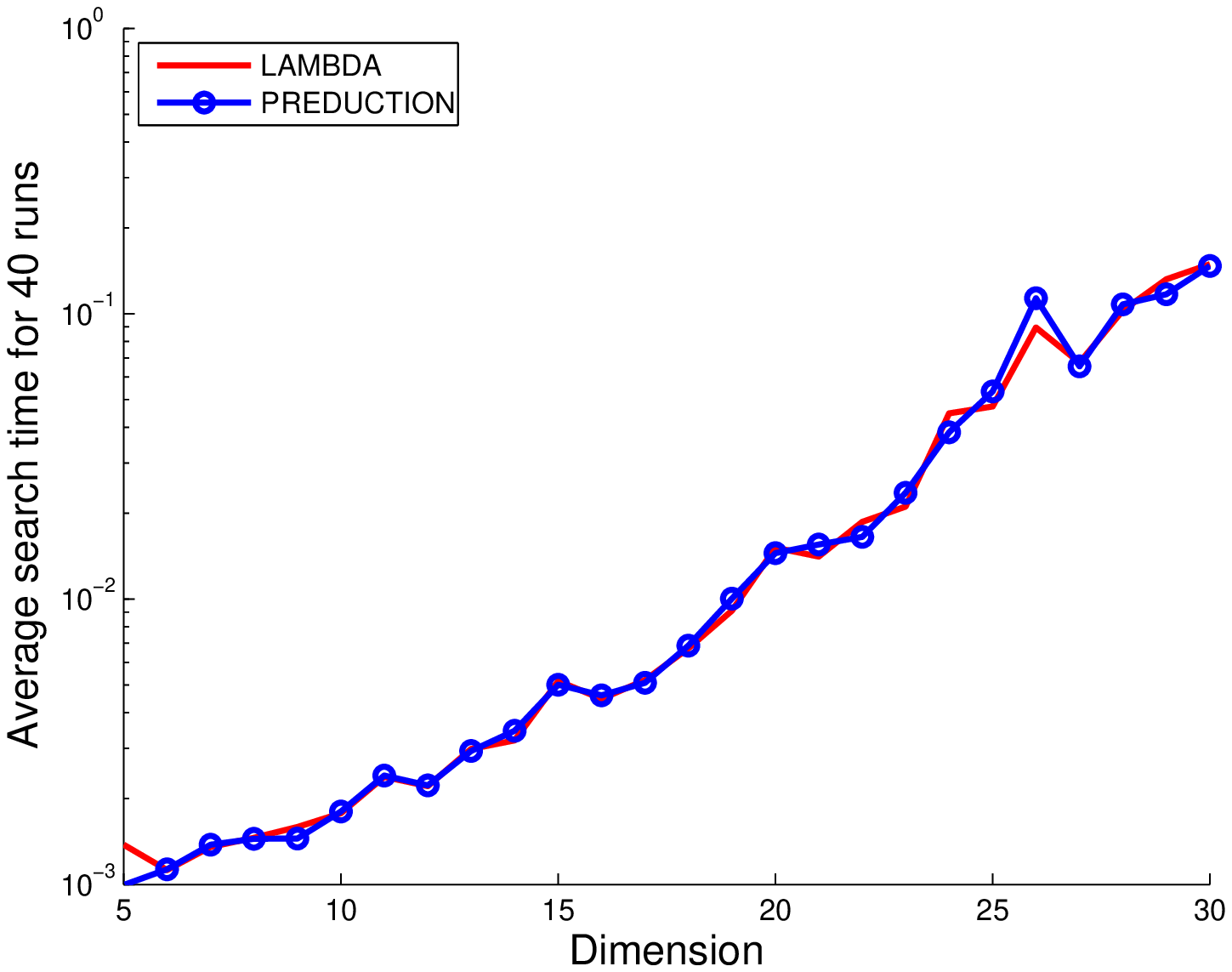}}
\caption{Running time for Case 1} \label{f:case1}
\end{figure}

\begin{figure}[ht!]
\centering
{\includegraphics[scale=0.80]{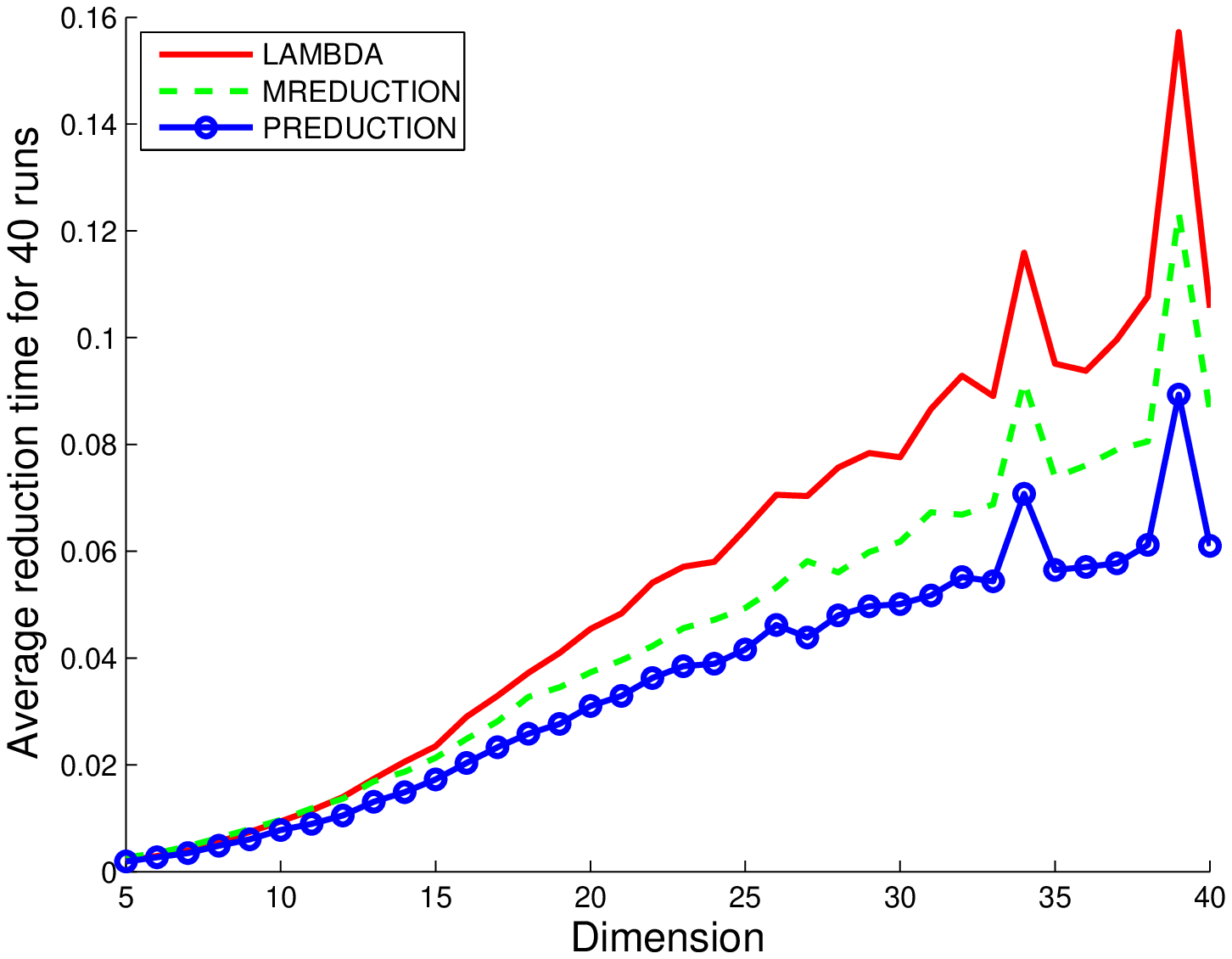}}
{\includegraphics[scale=0.80]{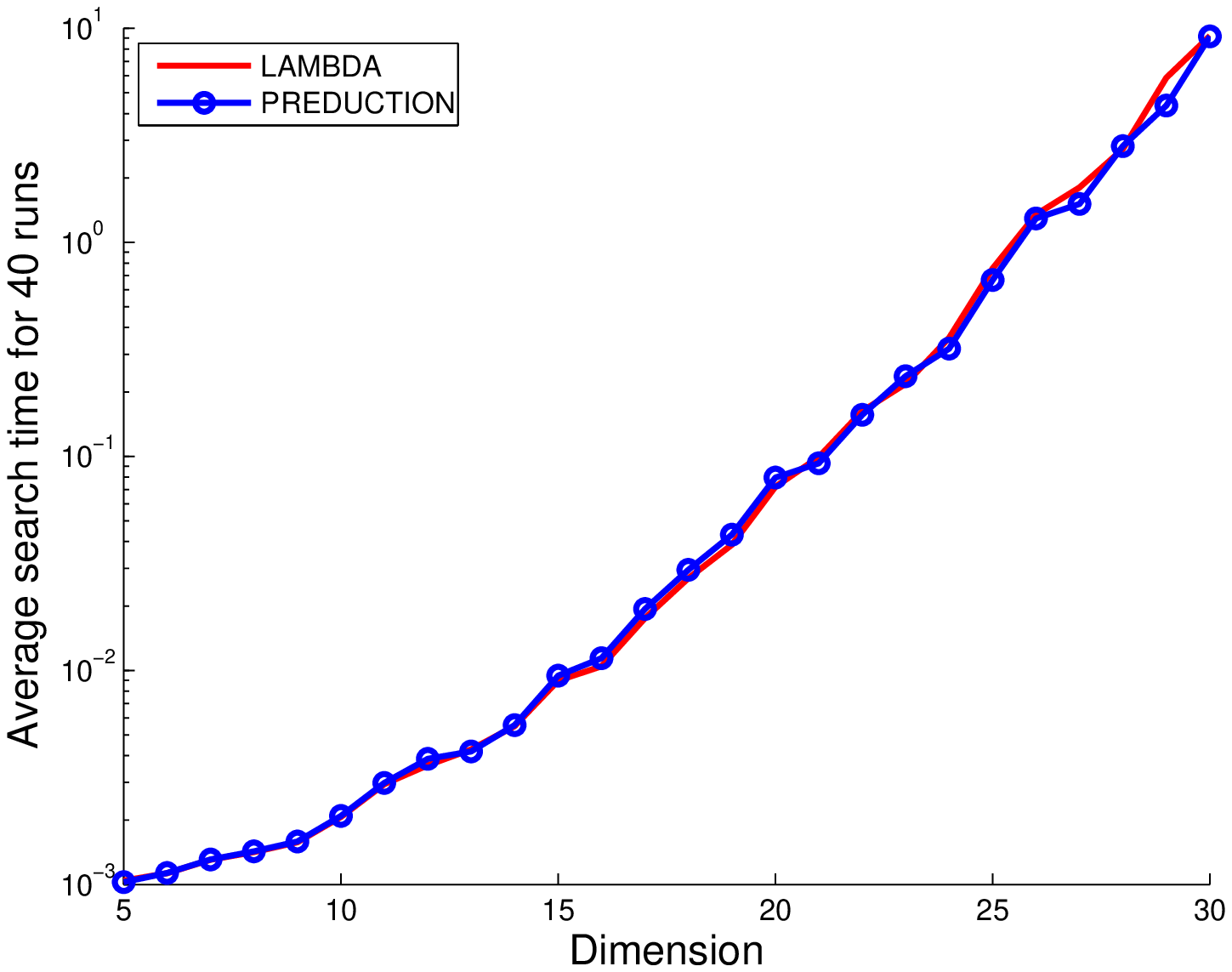}}
\caption{Running time for Case 2} \label{f:case2}
\end{figure}

\begin{figure}[ht!]
\centering
{\includegraphics[scale=0.80]{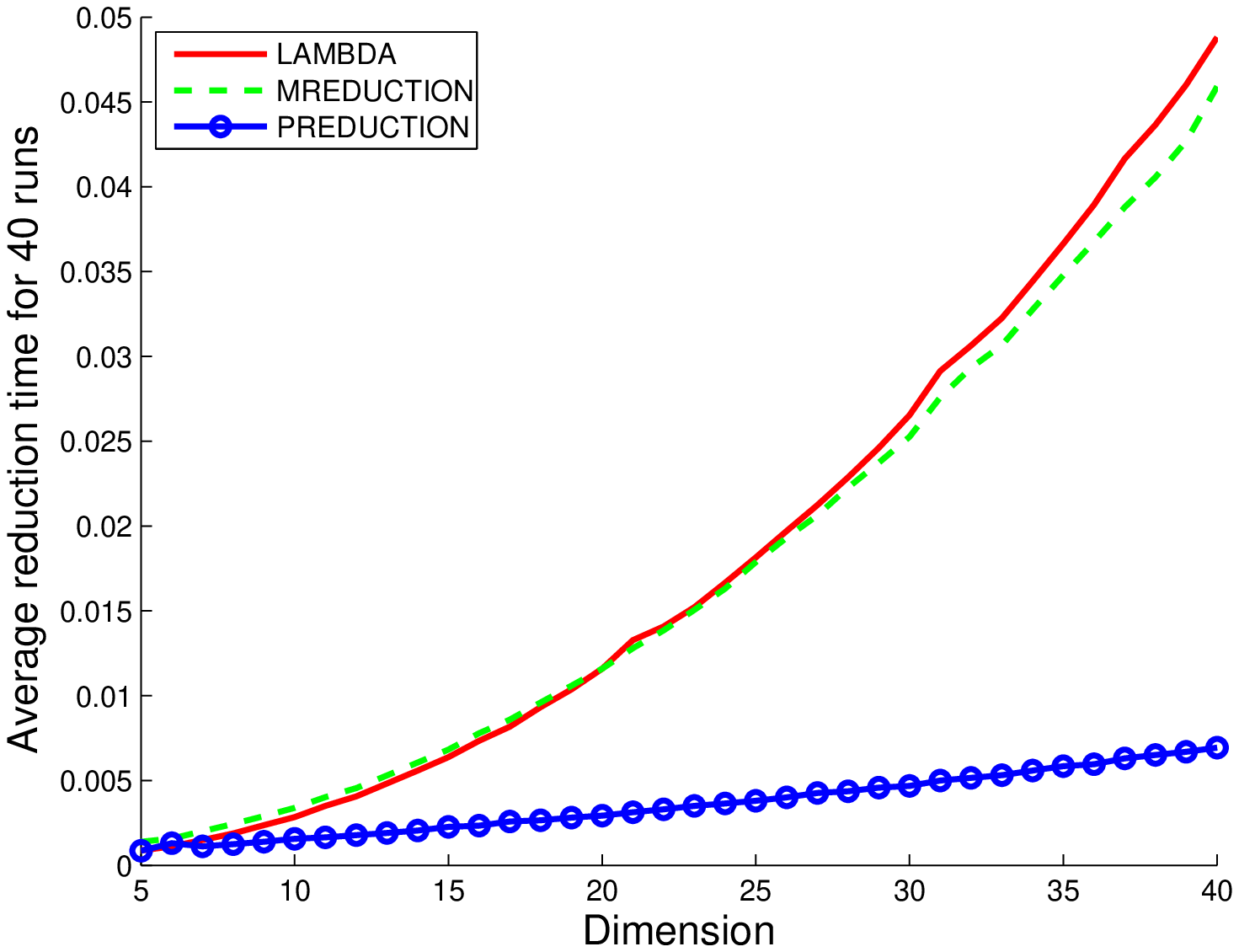}}
{\includegraphics[scale=0.80]{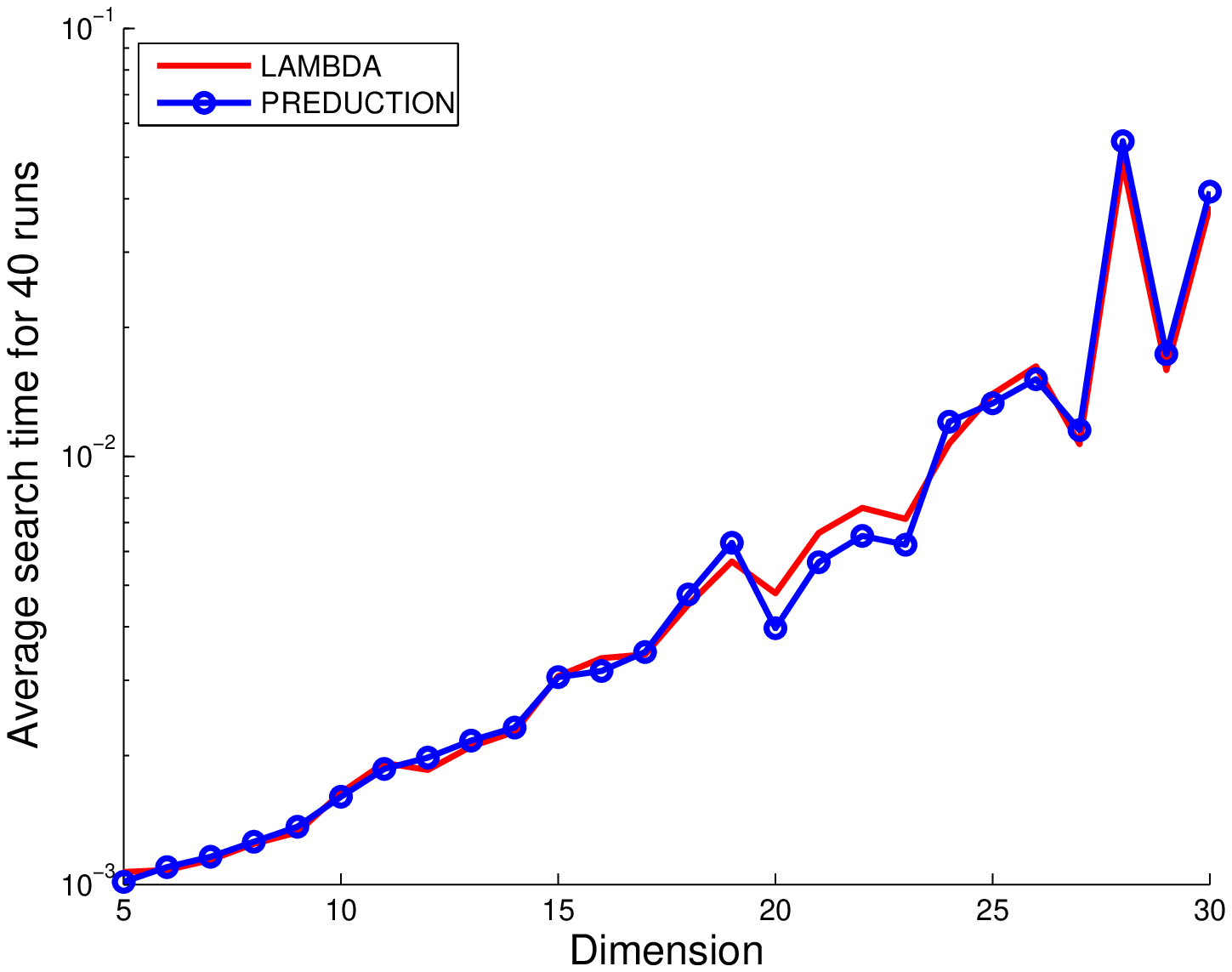}}
\caption{Running time for Case 3}\label{f:case3}
\end{figure}

\begin{figure}[ht!]
\centering
{\includegraphics[scale=0.80]{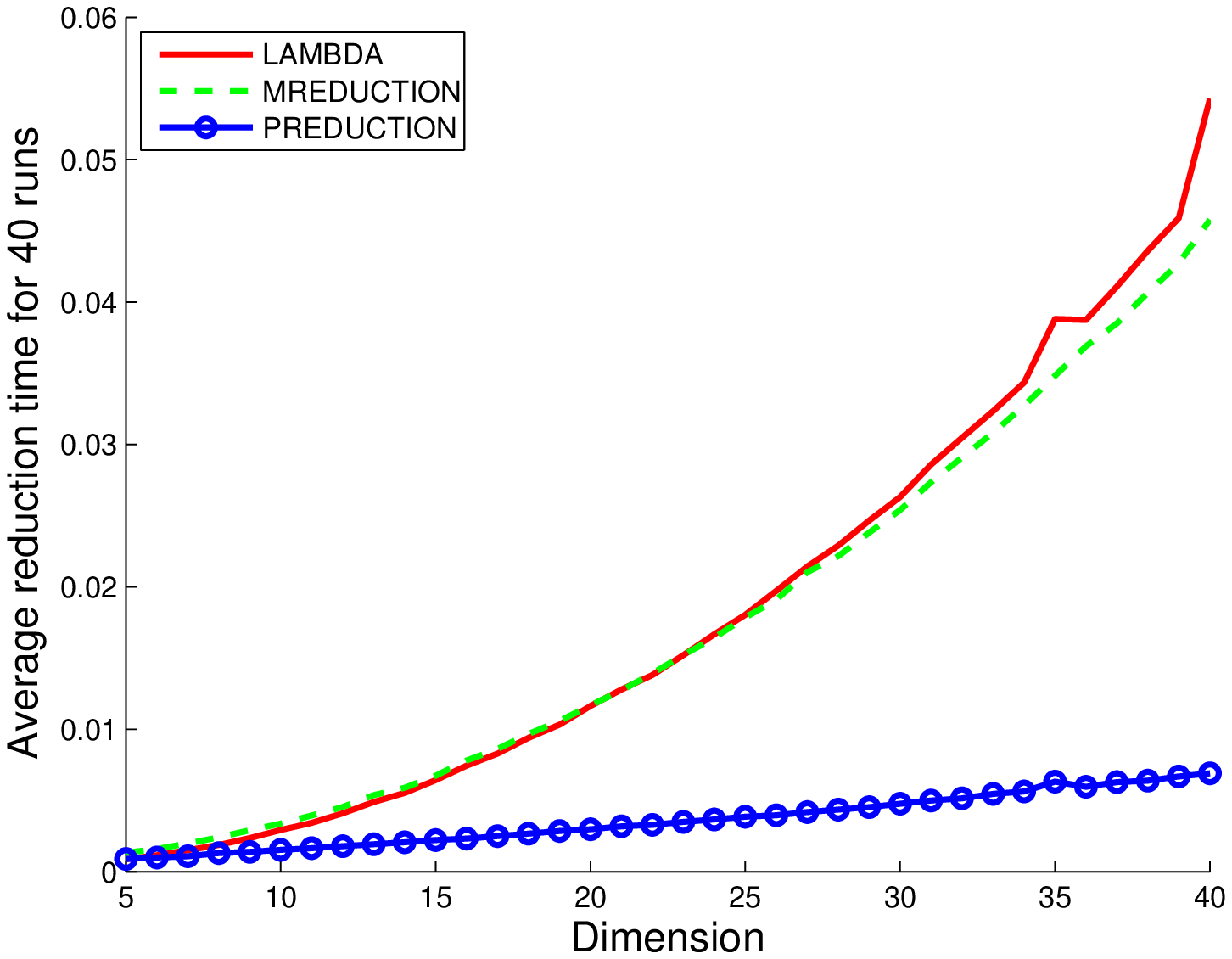}}
{\includegraphics[scale=0.80]{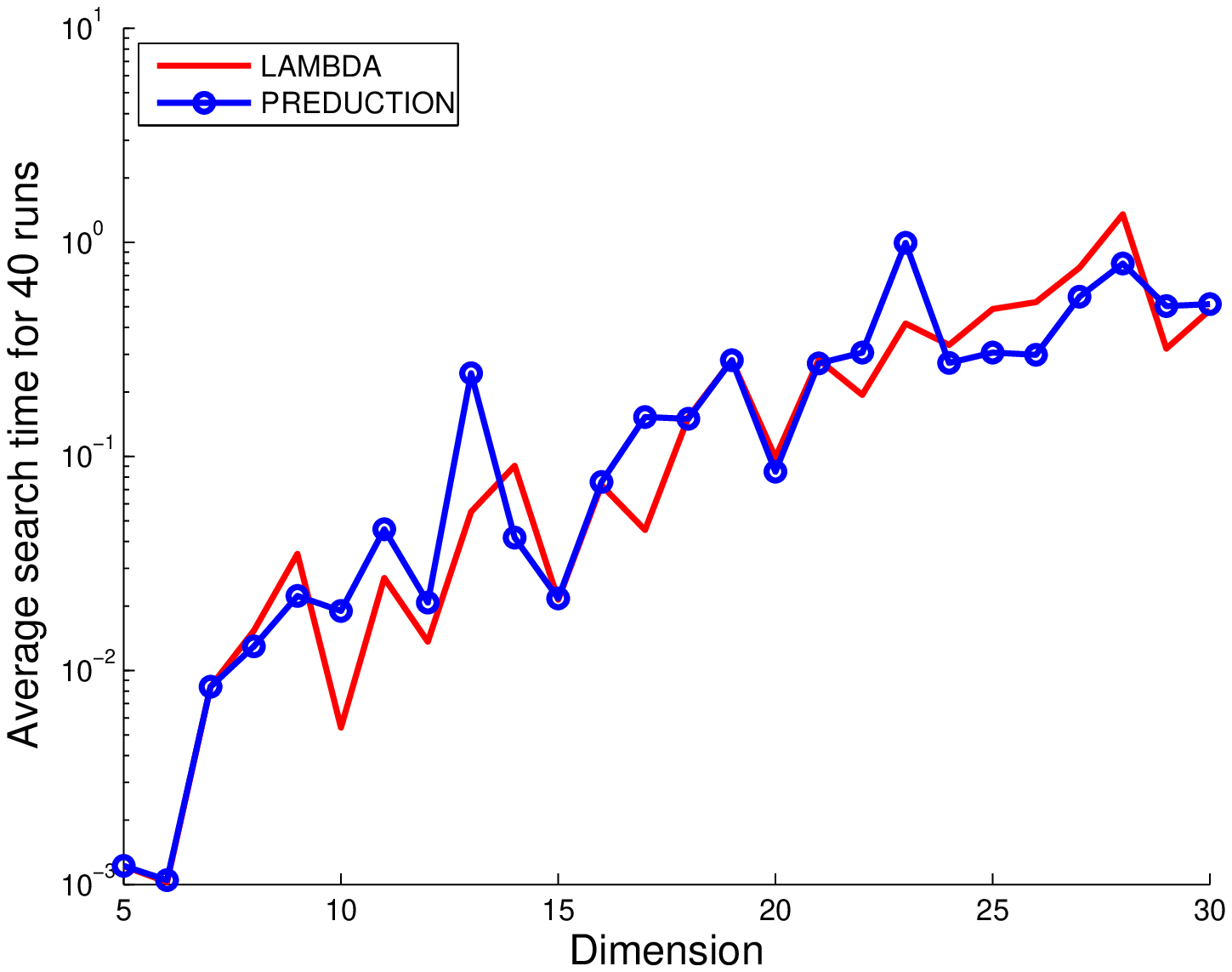}}
\caption{Running time for Case 4} \label{f:case4}
\end{figure}

\begin{figure}[ht!]
\centering
{\includegraphics[scale=0.80]{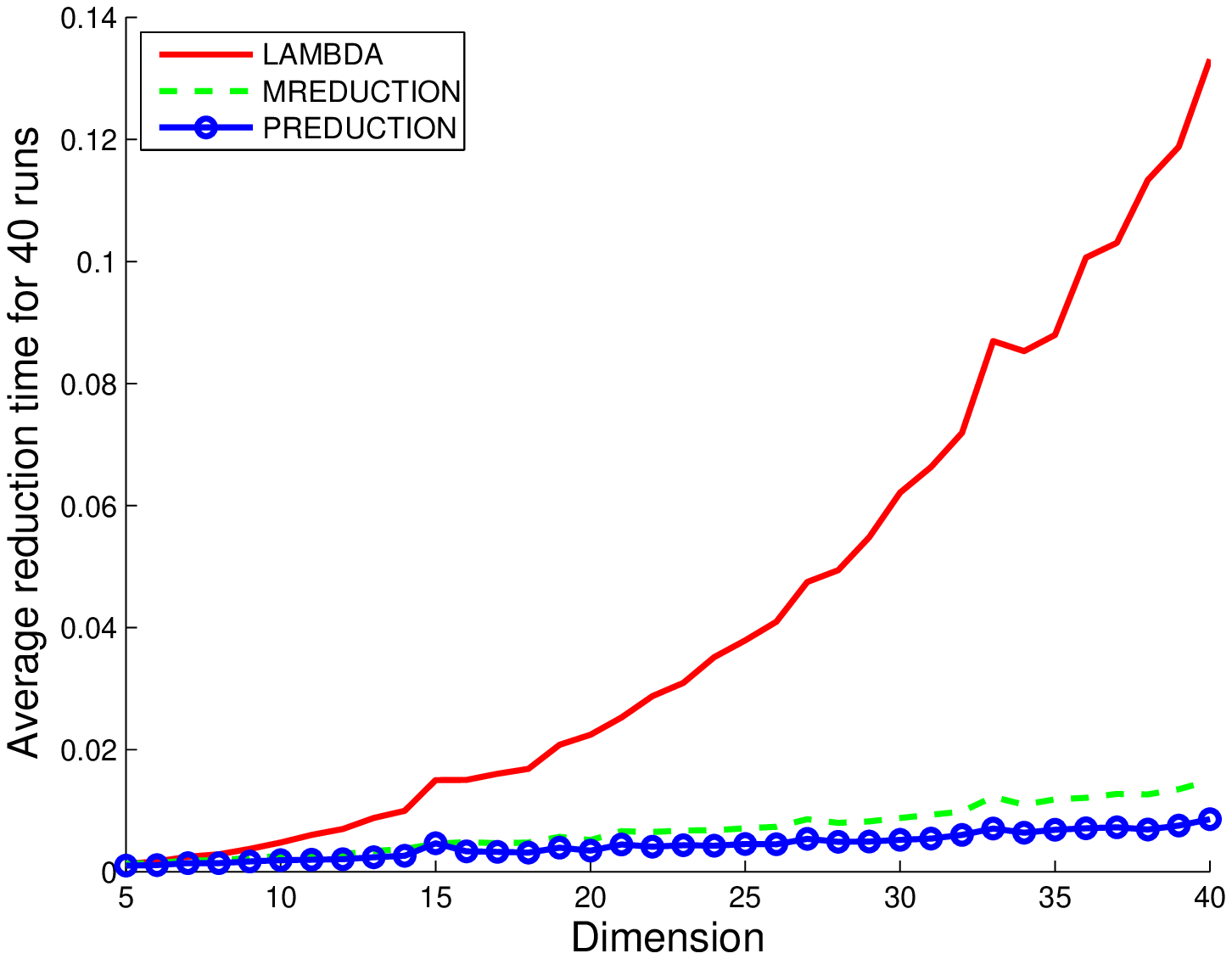}}
{\includegraphics[scale=0.80]{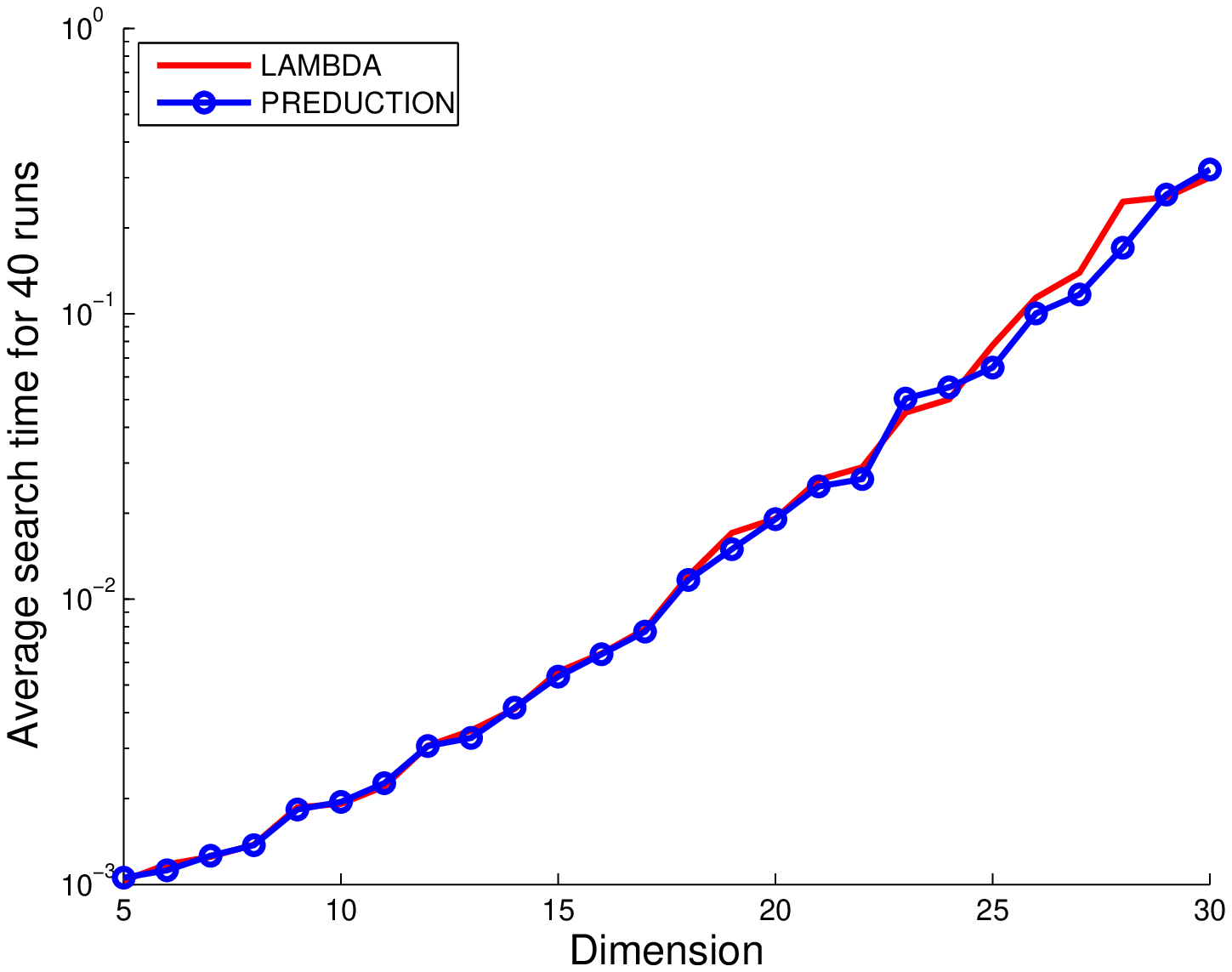}}
\caption{Running time for Case 5} \label{f:case5}
\end{figure}

\begin{figure}[ht!]
\centering
{\includegraphics[scale=0.80]{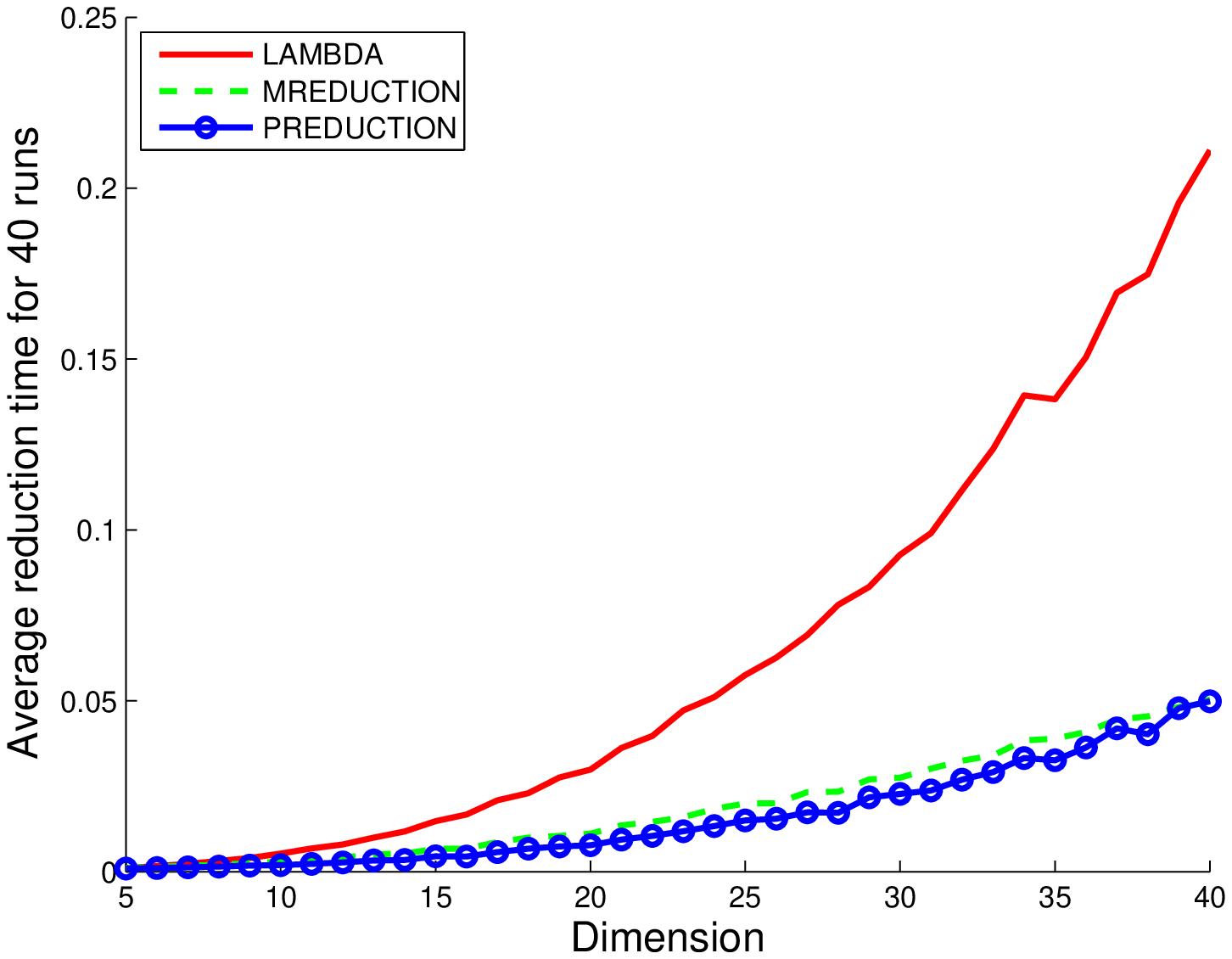}}
{\includegraphics[scale=0.80]{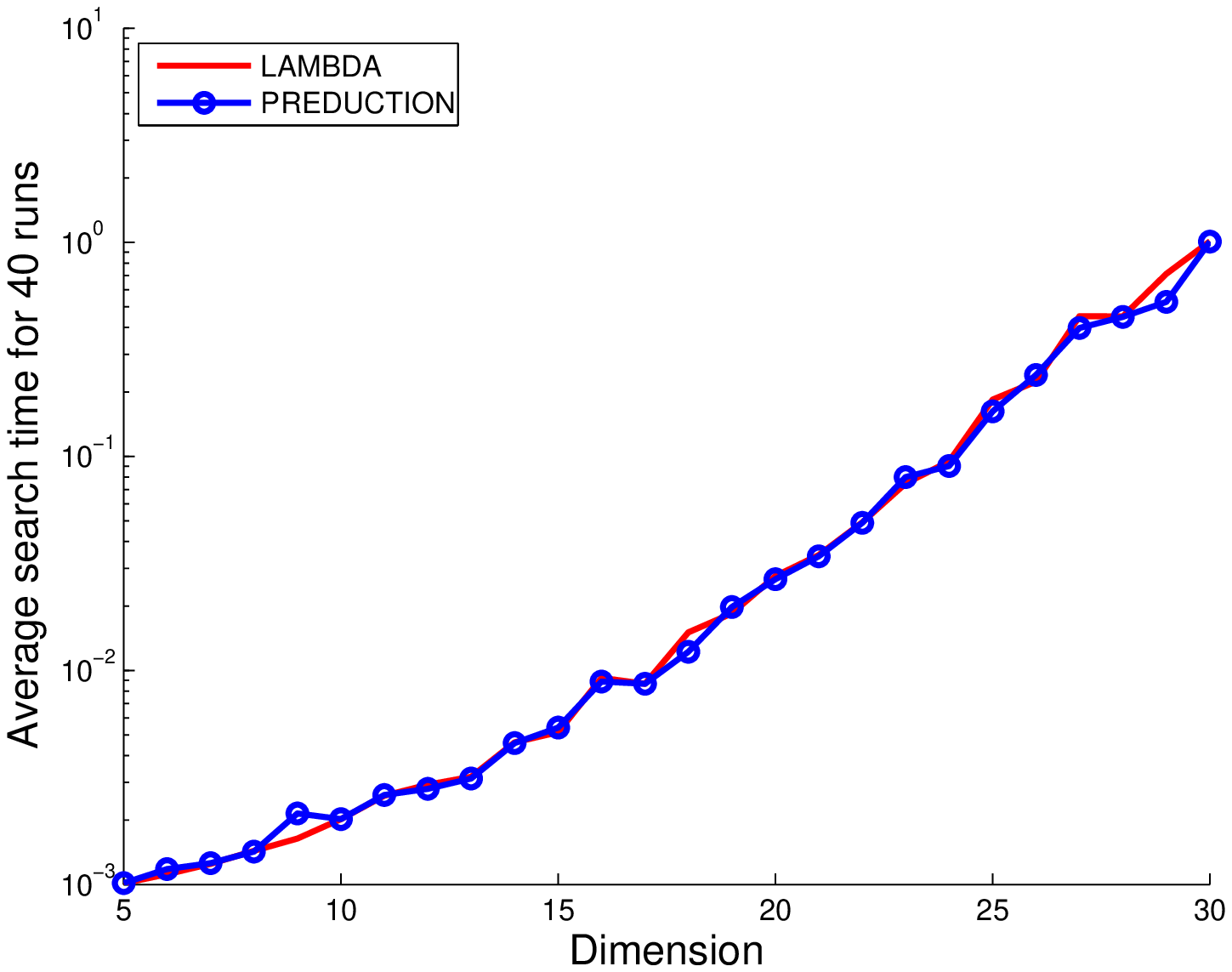}}
\caption{Running time for Case 6} \label{f:case6}
\end{figure}

\begin{figure}[ht!]
\centering
{\includegraphics[scale=0.80]{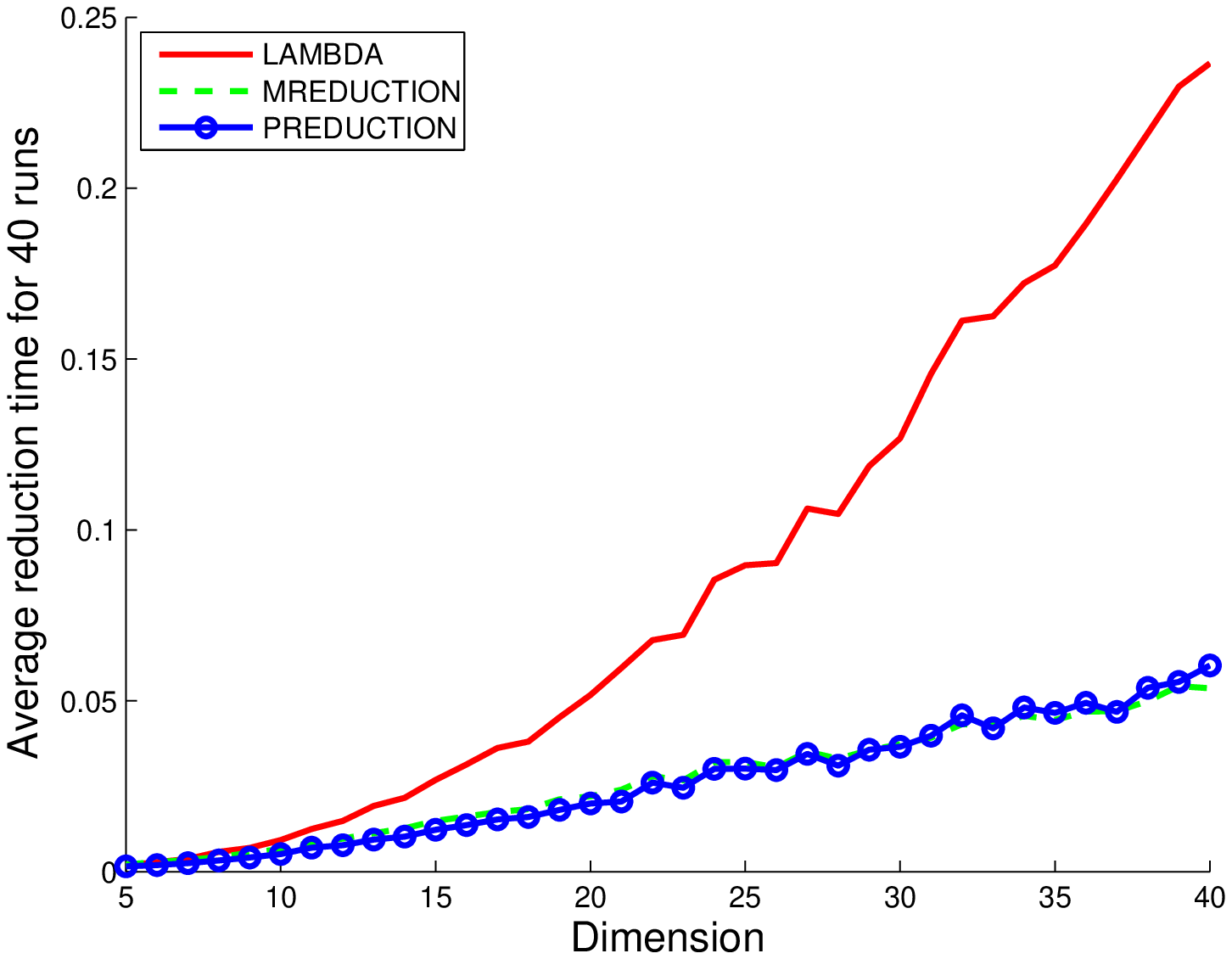}}
{\includegraphics[scale=0.80]{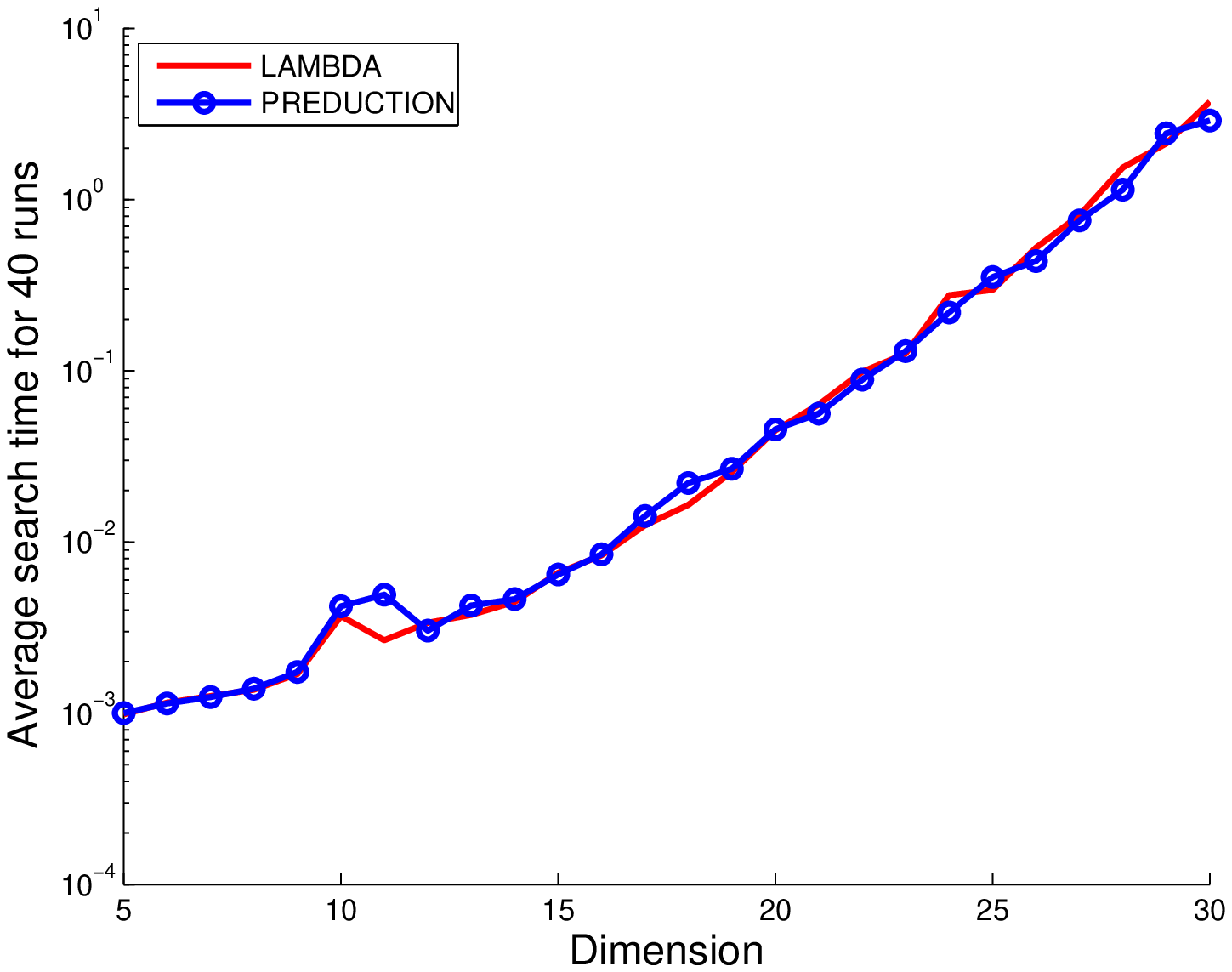}}
\caption{Running time for Case 7} \label{f:case7}
\end{figure}

\begin{figure}[ht!]
\centering
{\includegraphics[scale=0.80]{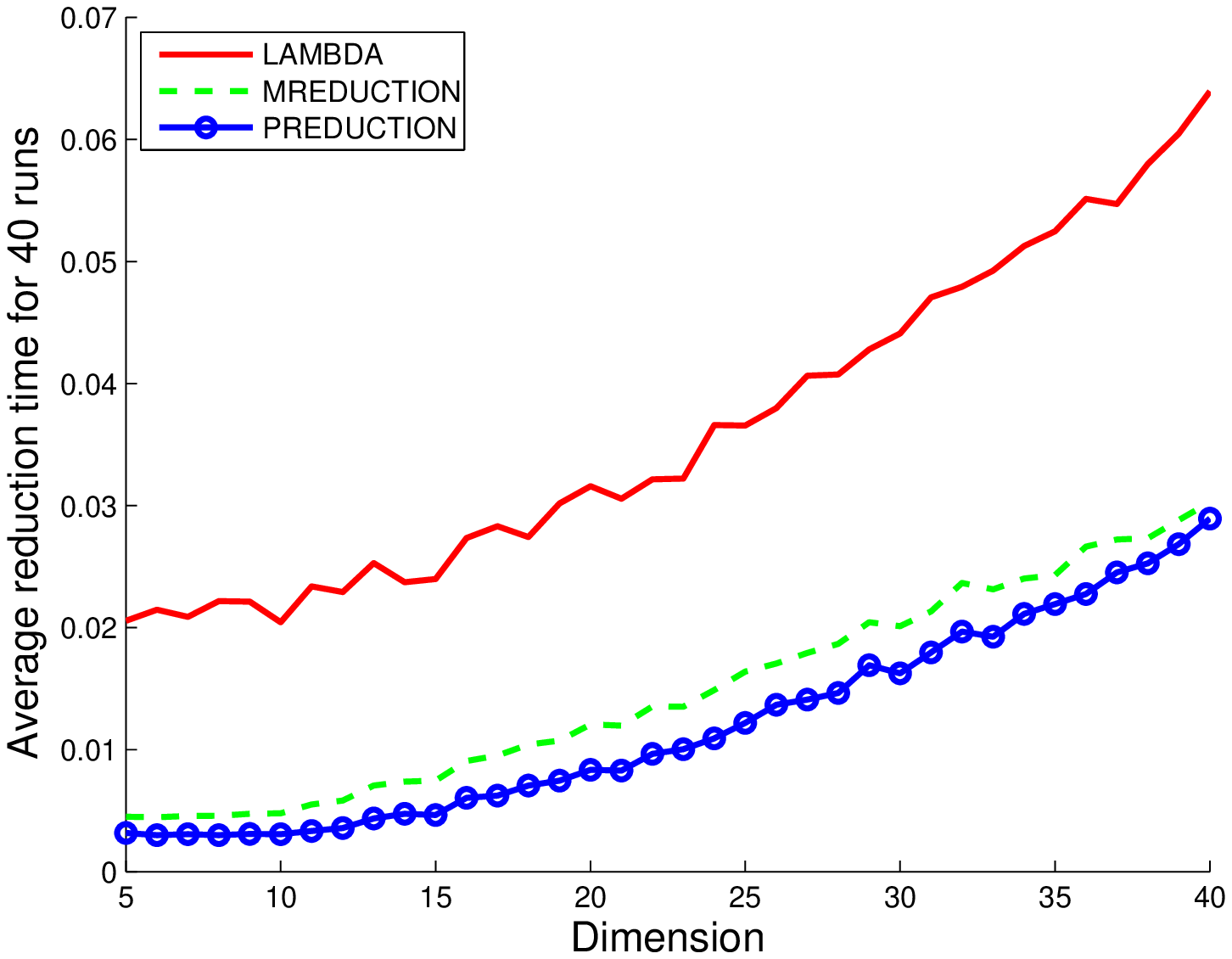}}
{\includegraphics[scale=0.80]{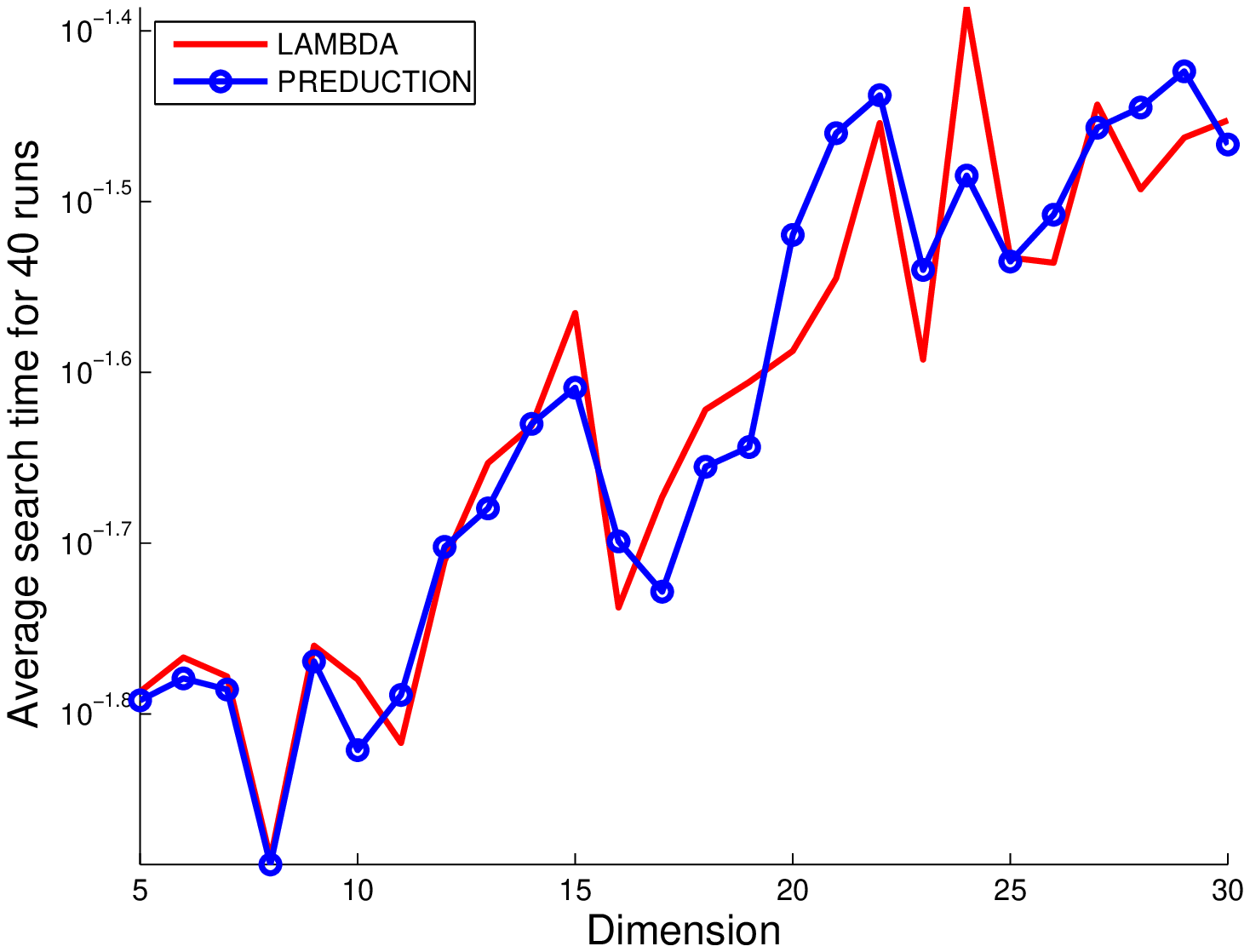}}
\caption{Running time for Case 8} \label{f:case8}
\end{figure}

\begin{figure}[ht!]
\centering
{\includegraphics[scale=0.80]{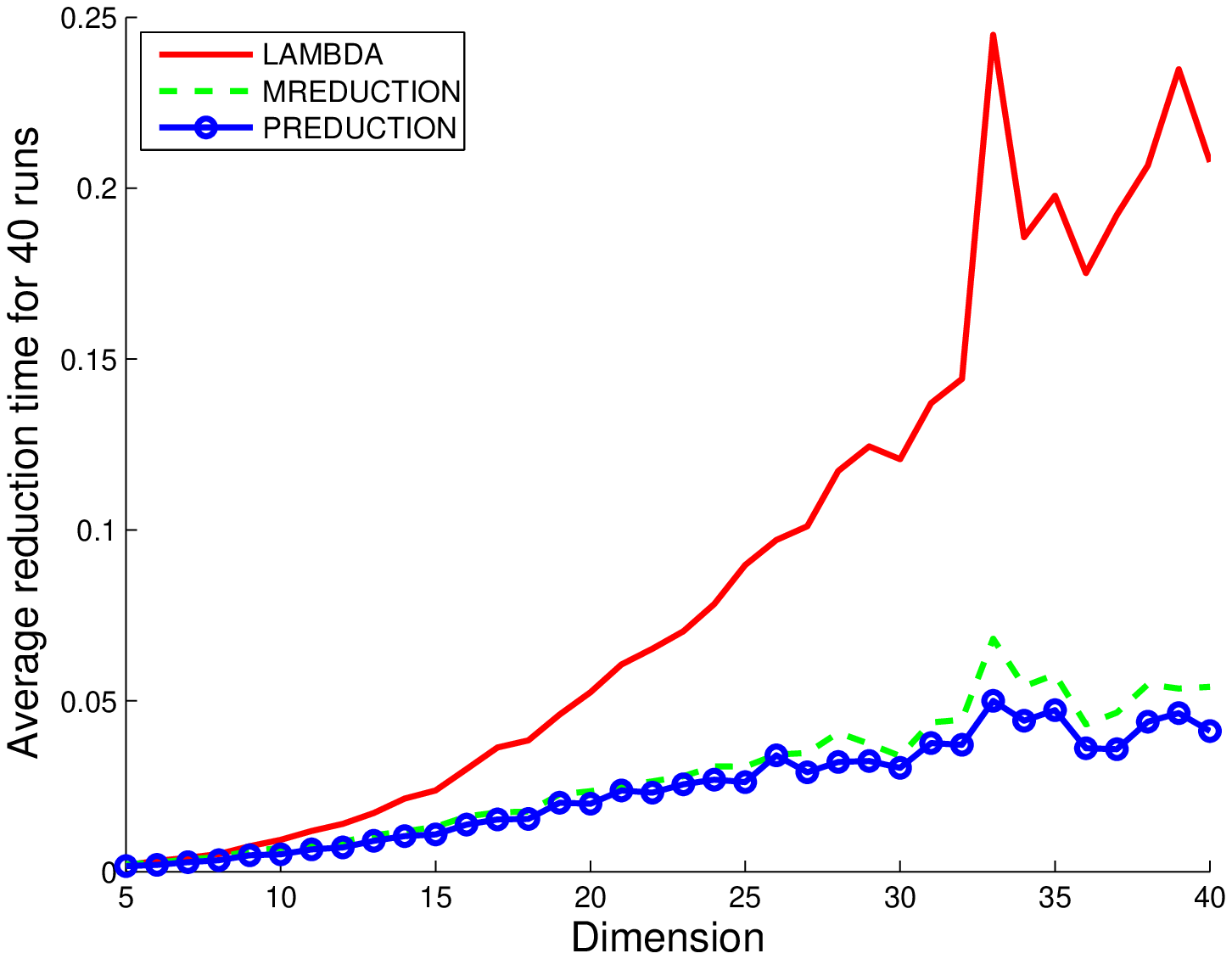}}
{\includegraphics[scale=0.80]{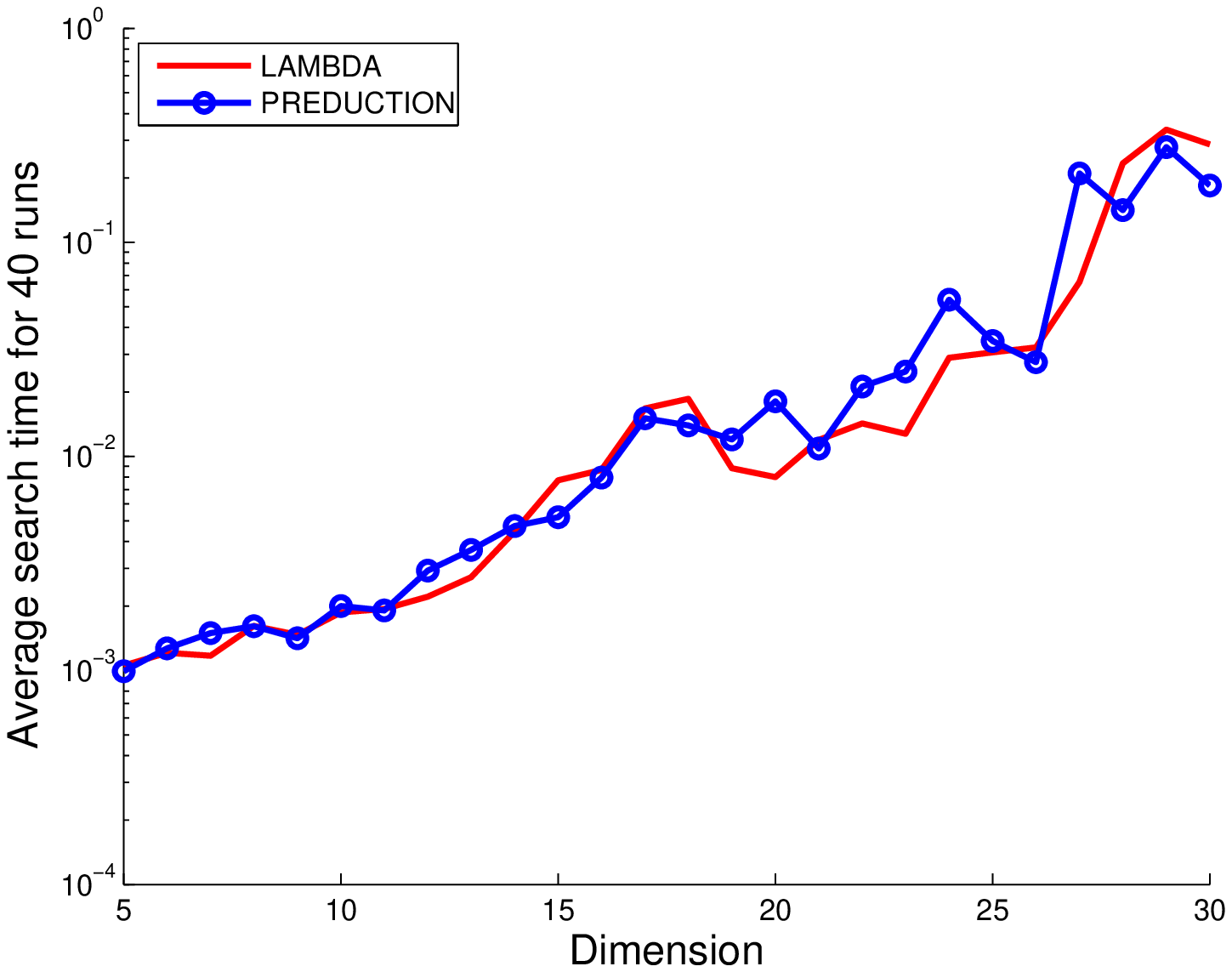}}
\caption{Running time for Case 9} \label{f:case9}
\end{figure}

\begin{figure}[ht!]
\centering
{\includegraphics[scale=0.75]{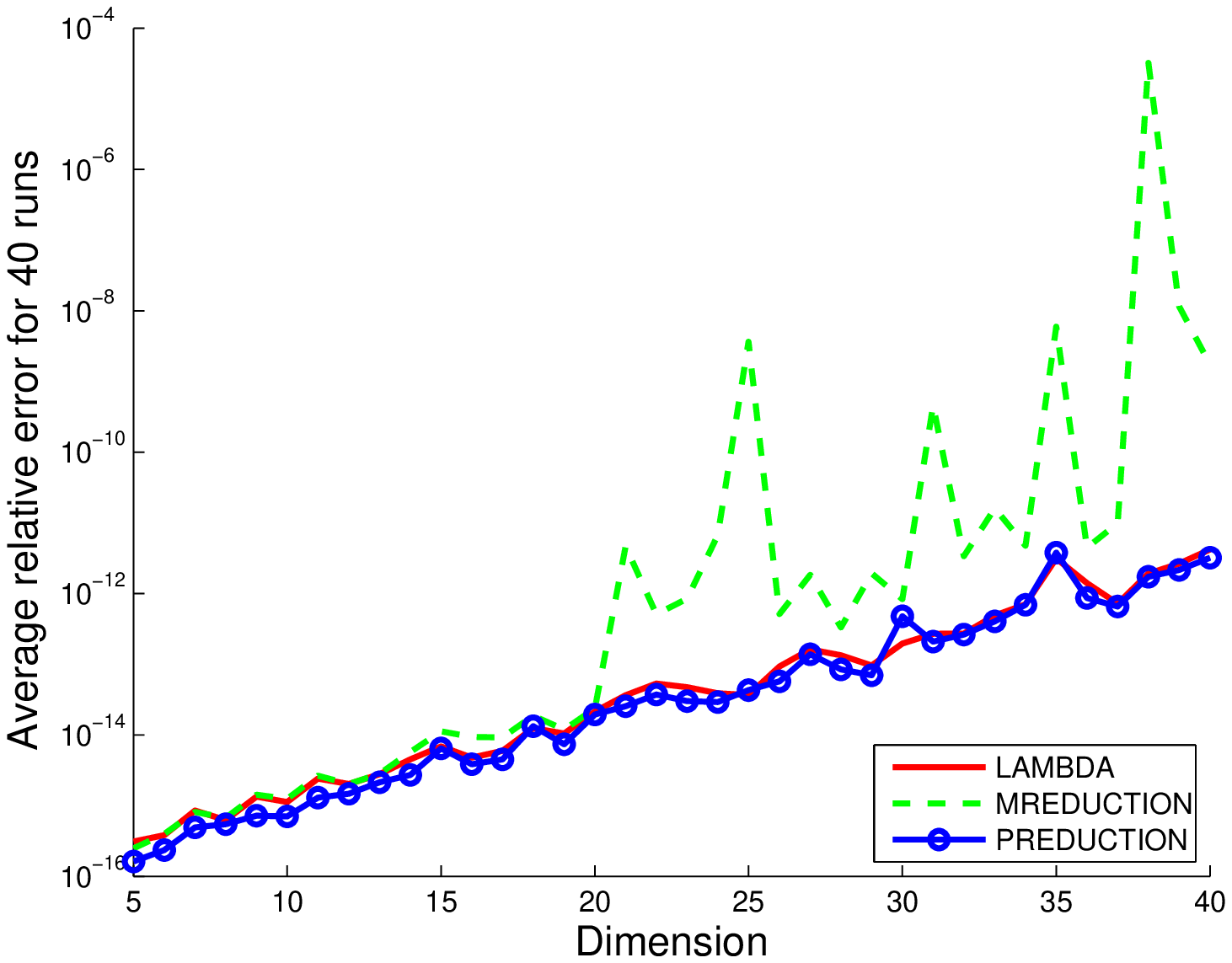}}
\caption{Relative backward error for Case 1} \label{f:case1e}
\end{figure}
\begin{figure}[ht!]
\centering
{\includegraphics[scale=0.75]{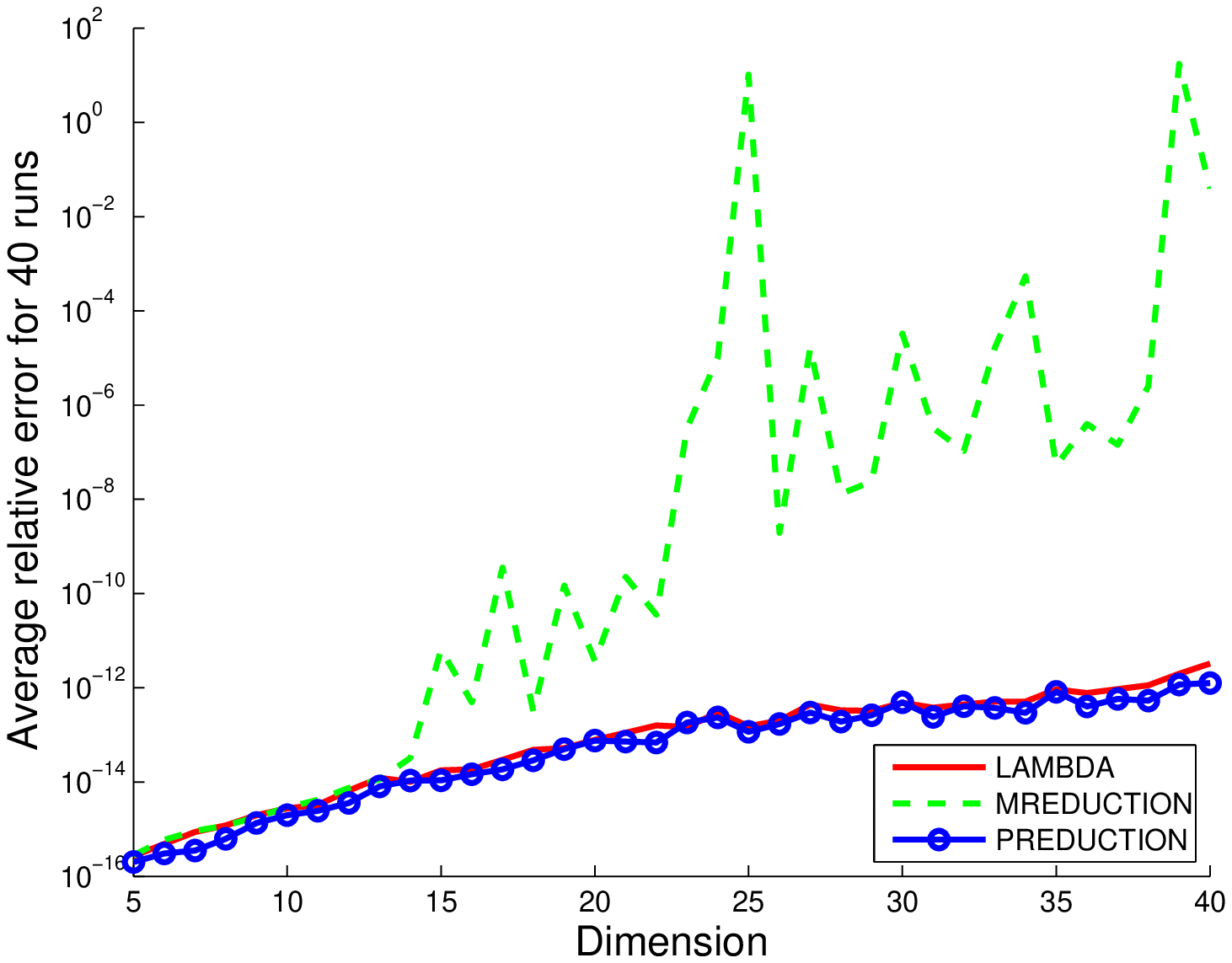}}
\caption{Relative backward error for Case 2} \label{f:case2e}
\end{figure}

\begin{figure}[ht!]
\centering
{\includegraphics[scale=0.75]{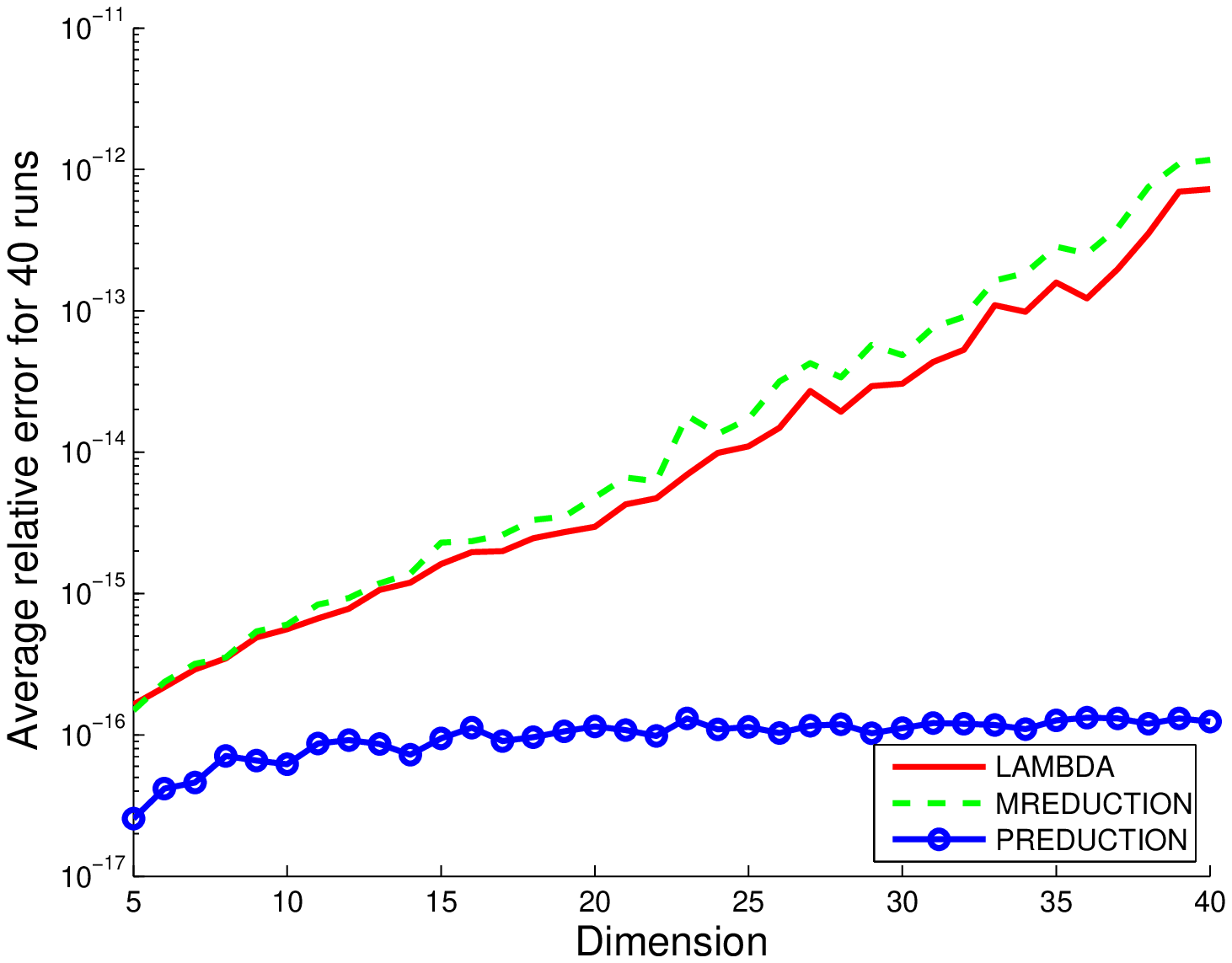}}
\caption{Relative backward error for Case 3} \label{f:case3e}
\end{figure}
\begin{figure}[ht!]
\centering
{\includegraphics[scale=0.75]{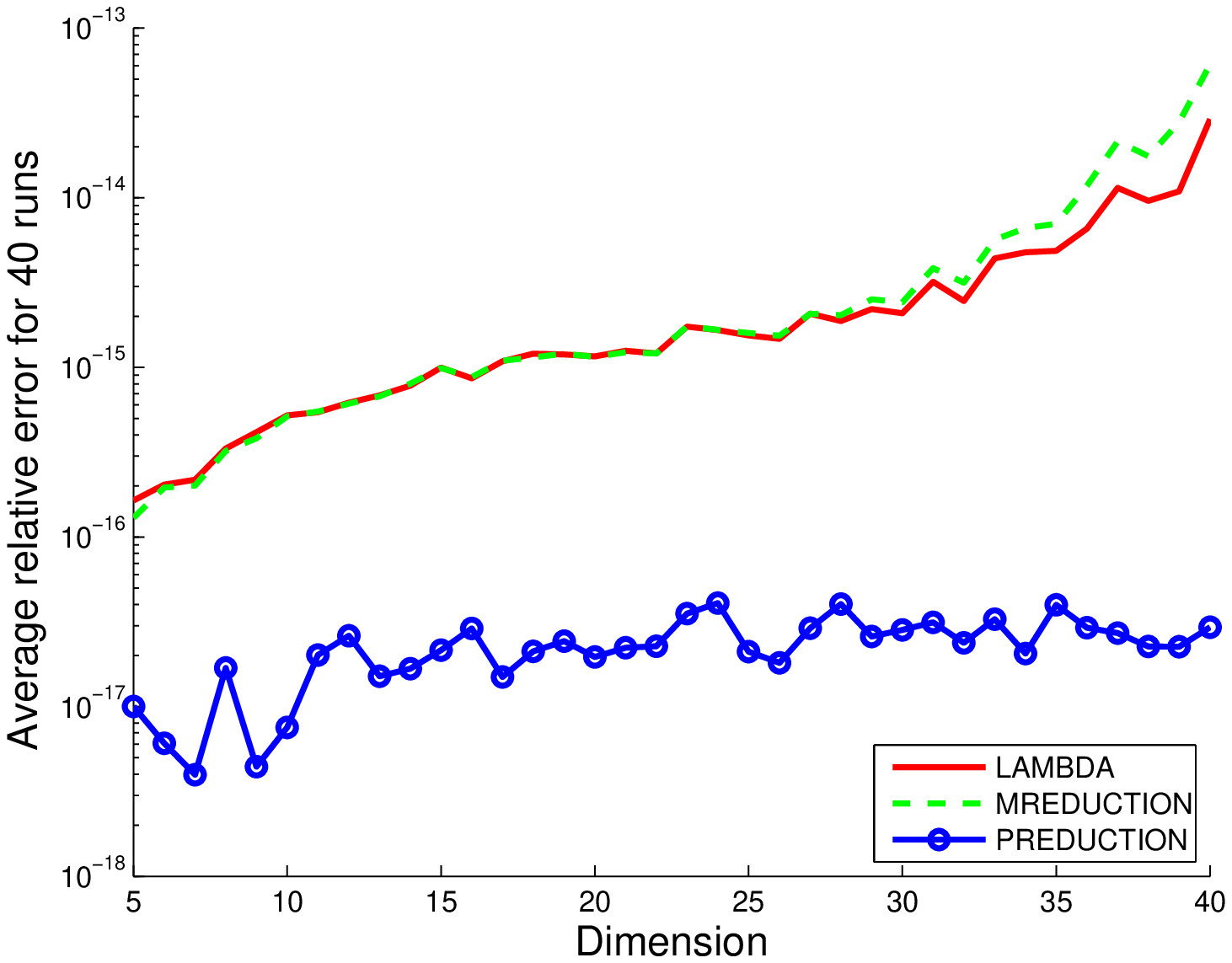}}
\caption{Relative backward error for Case 4} \label{f:case4e}
\end{figure}

\begin{figure}[ht!]
\centering
{\includegraphics[scale=0.75]{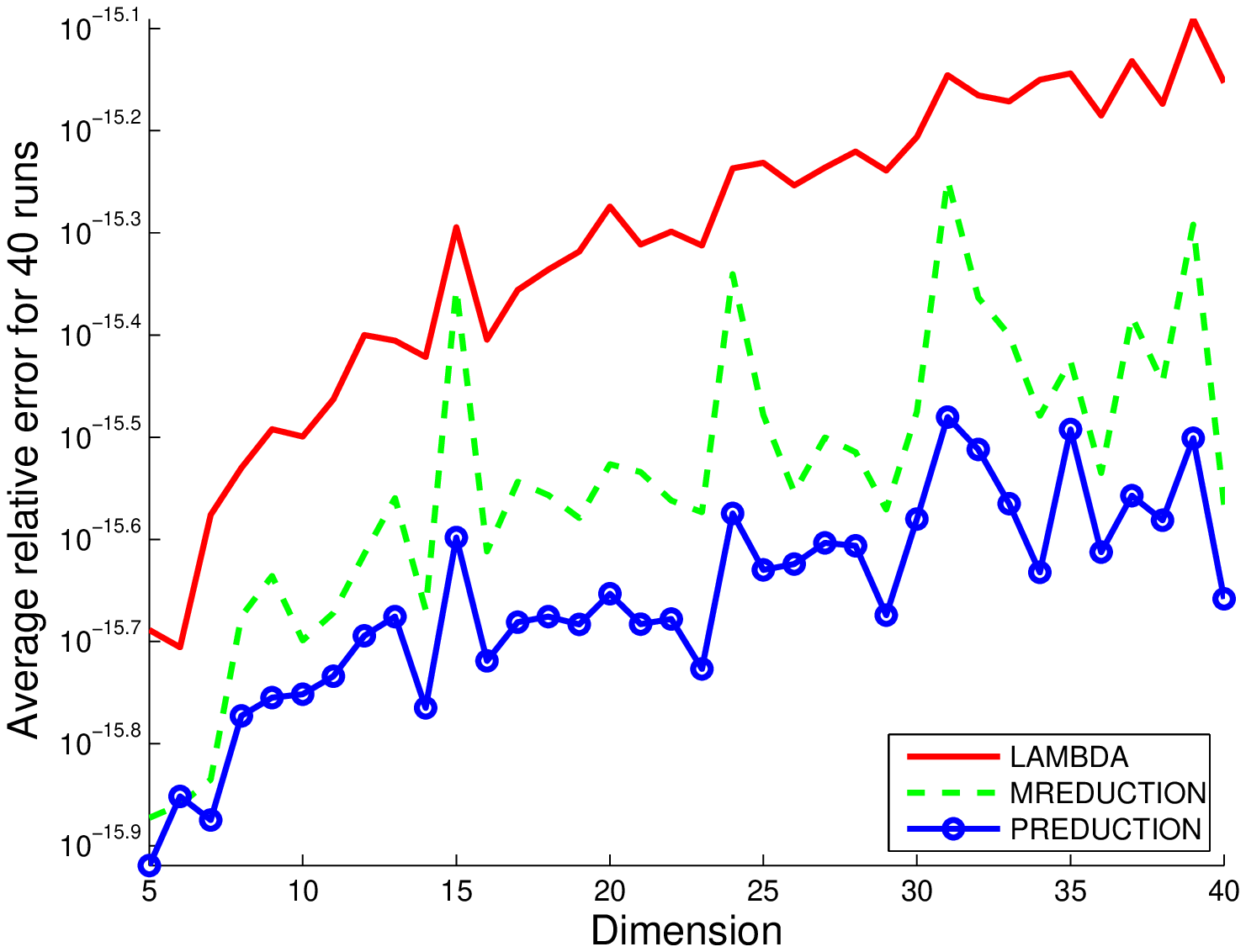}}
\caption{Relative backward error for Case 5} \label{f:case5e}
\end{figure}
\begin{figure}[ht!]
\centering
{\includegraphics[scale=0.75]{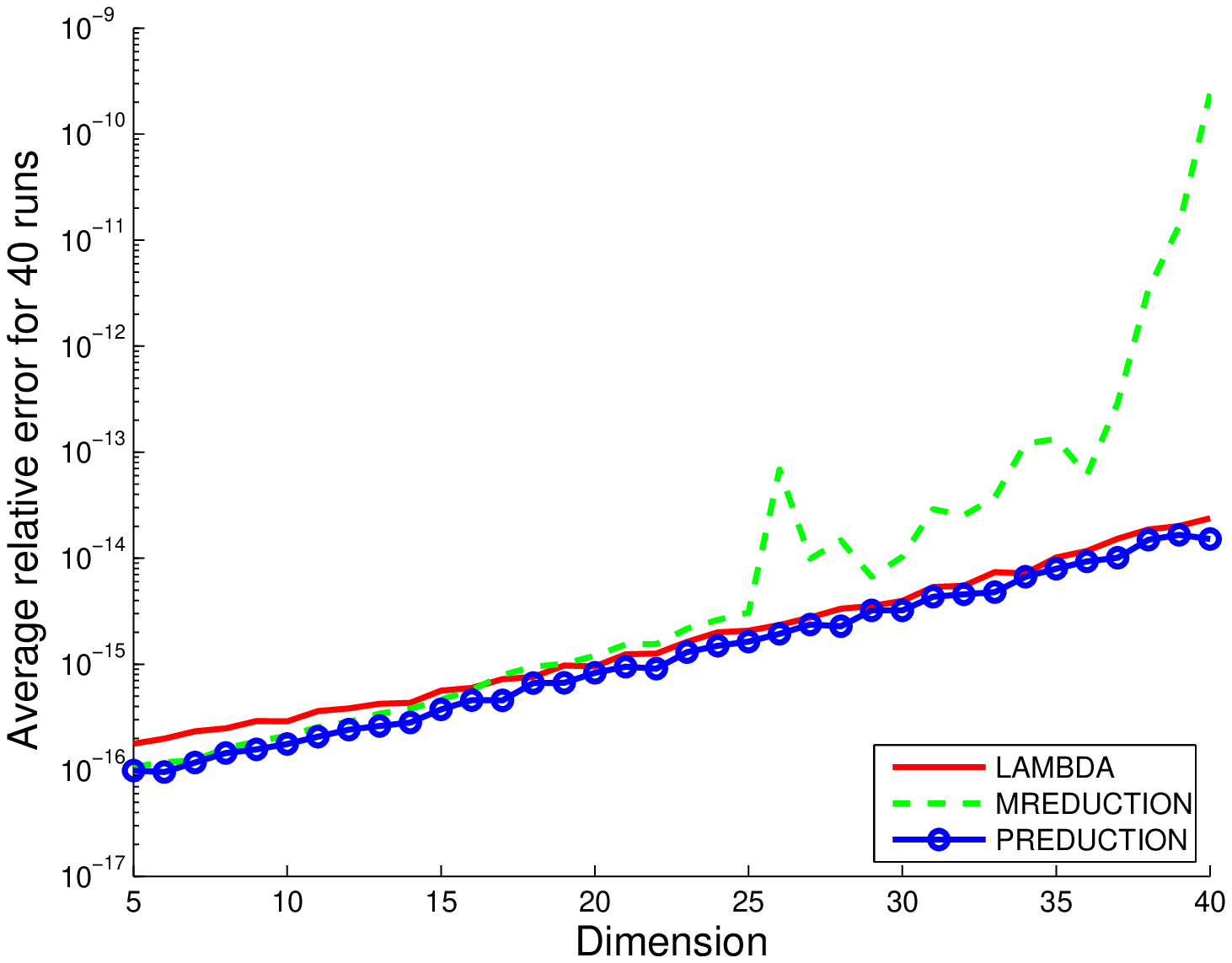}}
\caption{Relative backward error for Case 6} \label{f:case6e}
\end{figure}

\begin{figure}[ht!]
\centering
{\includegraphics[scale=0.75]{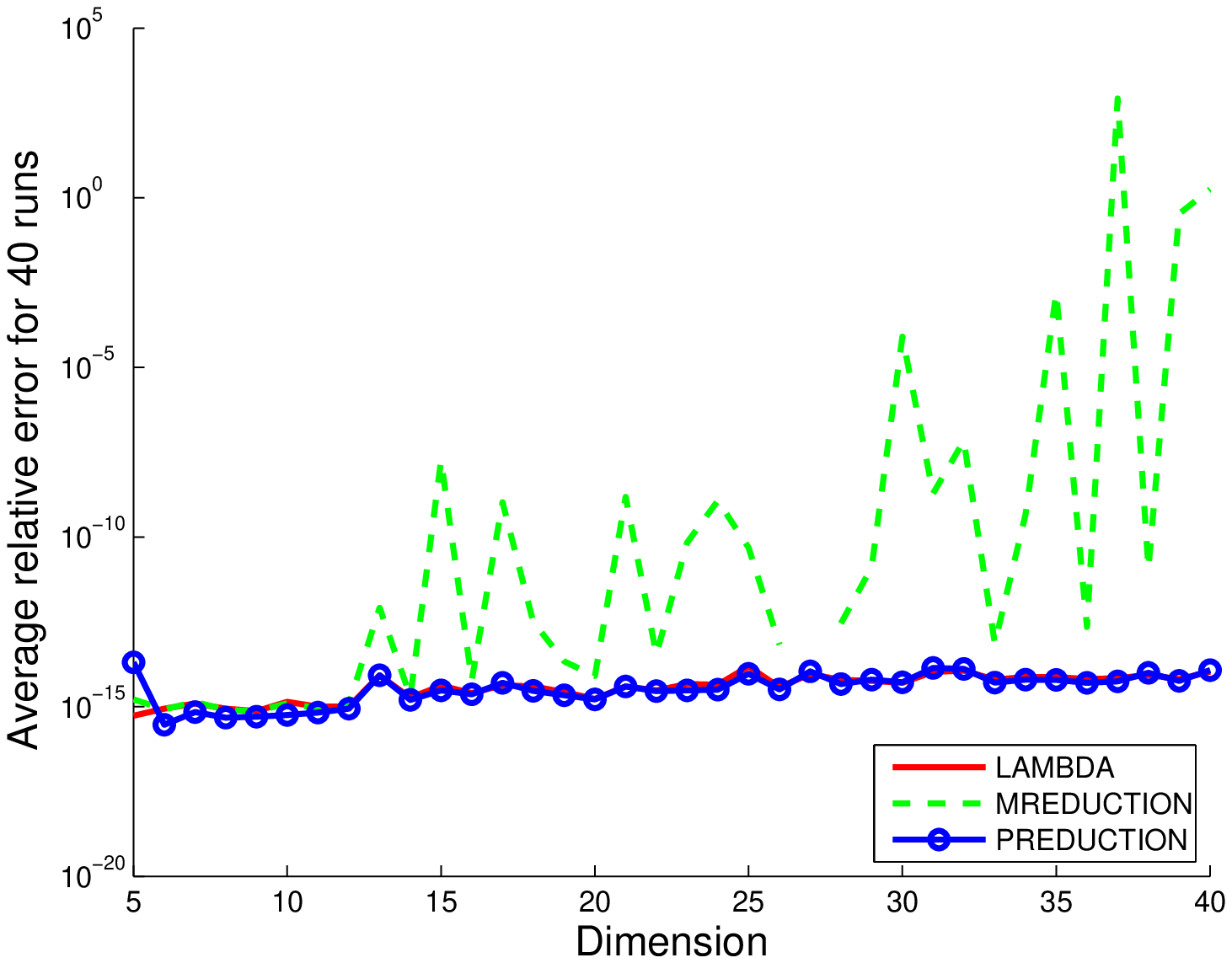}}
\caption{Relative backward error for Case 7} \label{f:case7e}
\end{figure}
\begin{figure}[ht!]
\centering
{\includegraphics[scale=0.75]{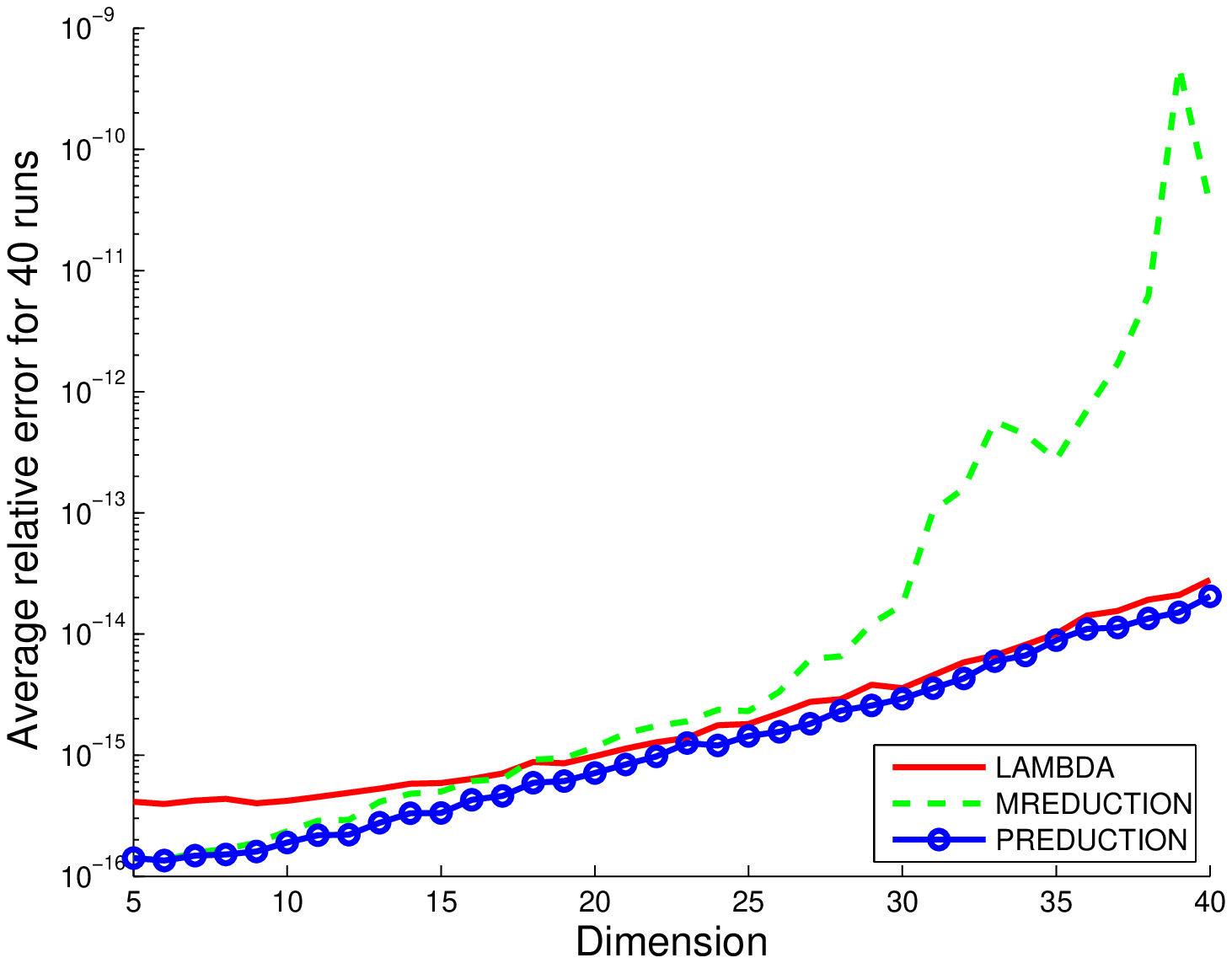}}
\caption{Relative backward error for Case 8} \label{f:case8e}
\end{figure}

\begin{figure}[ht!]
\centering
{\includegraphics[scale=0.75]{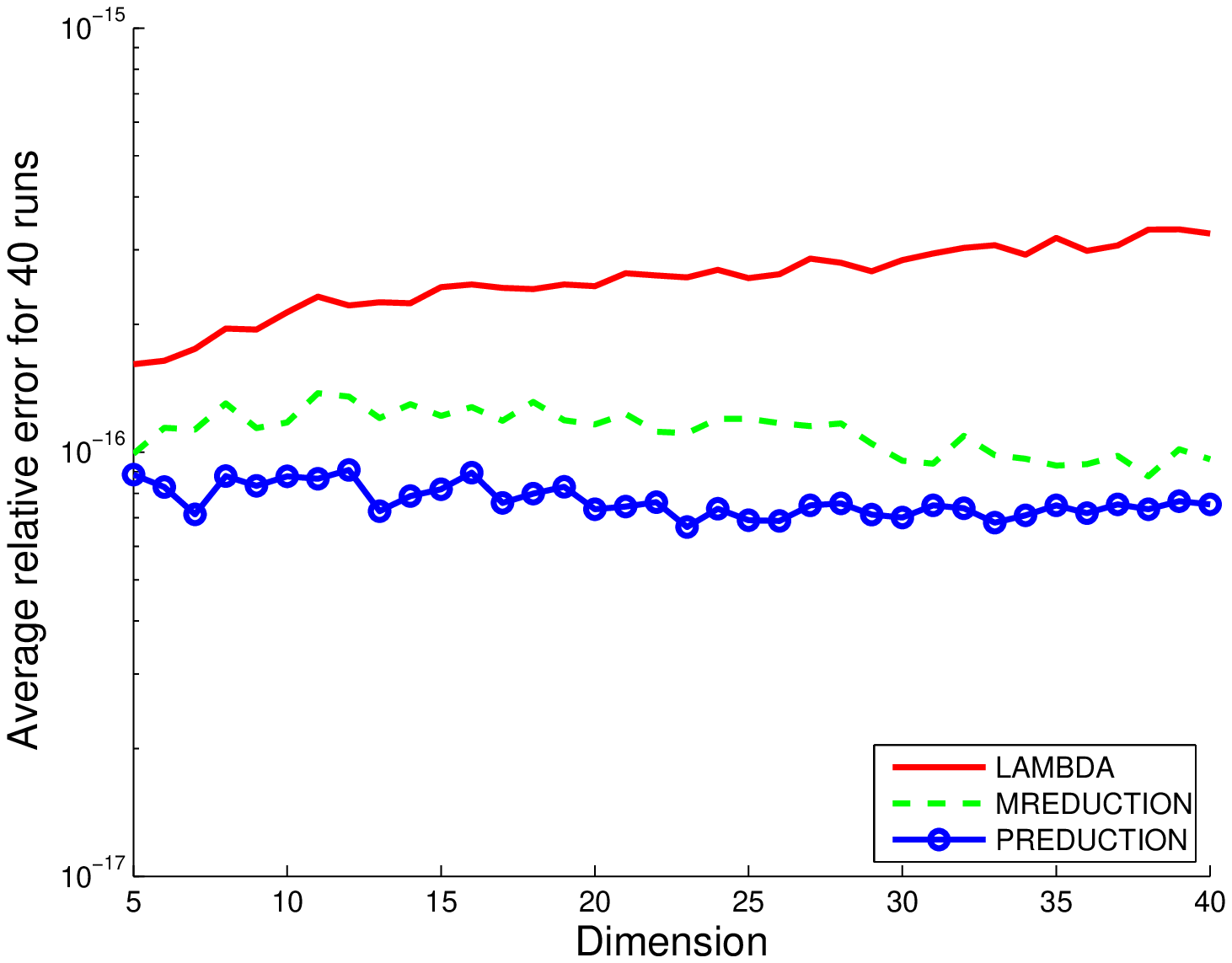}}
\caption{Relative backward error for Case 9} \label{f:case9e}
\end{figure}

\newpage

\section{Condition number criterion}\label{s:conditionNumber}

In some GNSS literature (see, e.g., \cite{LiuHZO99}, \cite{LouG03} and \cite{Xu01}), it is believed that the goal of reduction process is to reduce the condition number of the covariance matrix $\W_{\hbx}$.  The 2-norm condition number of $\W_{\hbx}$ is defined as (see \cite[Sect.\ 2.7]{GolV96})
\begin{equation*}
\kappa(\W_{\hbx}) = \norm{\W_{\hbx}} \norm{\W_{\hbx}^{-1}} = \sigma_1(\W_{\hbx})/\sigma_n(\W_{\hbx}),
\end{equation*}
where $\sigma_1(\W_{\hbx})$ and $\sigma_n(\W_{\hbx})$ are the smallest and largest singular values of $\W_{\hbx}$.  Geometrically, the condition number corresponds to the square of the ratio of the major and minor axes of the search ellipsoid (see \cite[Sect.\ 6.4]{StrB97}).  In other words, the condition number measures the elongation of the search ellipsoid.  As seen in Section \ref{s:geometry}, decorrelating the ambiguities makes the search ellipsoid less elongated.  Thus, a lower condition number of the covariance matrix indicates that the ambiguities are more decorrelated.
However, our contribution has shown that the goal of the reduction process is not to decorrelate the ambiguities as much as possible.     
 We now provide an example which shows that the condition number criterion can be misleading.  Let
\begin{equation*} 
\W_{\hbx} = \L^T\D\L = 
\left(
\begin{array}{cc}
1 & 1000.5 \\
0 &  1
\end{array}
\right)
\left(
\begin{array}{cc}
4 &  0 \\
0 &  0.05
\end{array}
\right)
\left(
\begin{array}{cc}
1 & 0 \\
1000.5 &  1
\end{array}
\right).
\end{equation*}
  
The condition number of $\W_{\hbx}$ is $1.2527 \times 10^{10}$.  If we apply a lower triangular IGT to decorrelate the 1st and 2nd ambiguity, our new covariance matrix is
\begin{equation*} 
\W_{\hbz} = \L^T\D\L = 
\left(
\begin{array}{cc}
1 & 0.5 \\
0 &  1
\end{array}
\right)
\left(
\begin{array}{cc}
4 &  0 \\
0 &  0.05
\end{array}
\right)
\left(
\begin{array}{cc}
1 & 0 \\
0.5 &  1
\end{array}
\right).
\end{equation*}

The condition number of $\W_{\hbz}$ is $80.5071$.  This shows that to solely decorrelate the ambiguities can drastically lower the condition number of the covariance matrix, yet it will not yield any improvement towards the search process as shown in Section \ref{s:proof}.  
If we permute the ambiguities, the $\mathrm{L^TDL}$ factorization becomes
\begin{equation*} 
\P^T \W_{\hbz} \P = \L^T\D\L = 
\left(
\begin{array}{cc}
1 & 0.0062 \\
0 &  1
\end{array}
\right)
\left(
\begin{array}{cc}
0.0498 &  0 \\
0 &  4.0125
\end{array}
\right)
\left(
\begin{array}{cc}
1 & 0 \\
0.0062 &  1
\end{array}
\right).
\end{equation*}

The condition number of $\P^T \W_{\hbz} \P$ is still 80.5071. Yet, numerical simulations indicate that the search process is faster with $\W_{\hbx}$ for randomly chosen $\hat \x$.  This is explained by the fact that order (\ref{eq:order}) is satisfied by $\W_{\hbx}$ and not $\P^T \W_{\hbz} \P$.
For this reason,  order (\ref{eq:order}) is a better criterion to evaluate the reduction process than the condition number.   
 
\section{Implications to the standard OILS form}\label{s:miscLLL}

We now translate our result on the role of lower triangular IGTs in the quadratic OILS form (\ref{eq:qils}) to the standard OILS form (\ref{eq:ILS}). 
Note that in the standard form, it is upper triangular IGTs that make the absolute values of the off-diagonal entries of $\R$ as small as possible.  
Section \ref{s:proof} showed that making $\R$ as close to diagonal as possible will not help the search process.   We know that the purpose of permutations is to strive for $r_{11} \ll \ldots \ll r_{nn}$.  This contribution shows that it is also the purpose of upper triangular IGTs.  In the reduction process, if $r_{k-1,k-1} > \sqrt{r^2_{k-1,k} + r_{kk}^2}$, then permuting columns $k$ and $k-1$ will decrease $r_{k-1,k-1}$ and increase $r_{kk}$ (see Section \ref{s:permutations}).  The purpose of an upper triangulr IGT is two-fold.  First, applying IGT $\Z_{k-1,k}$ makes $|r_{k-1,k}|$ as small as possible, which increases the likelihood of a column permutation.  Second, if a permutation occurs, from (\ref{eq:r_k-1}) and (\ref{eq:r_kk}), a smaller $|r_{k-1,k}|$ ensures a greater decrease in $r_{k-1,k-1}$ and a greater increase in $r_{kk}$.  

\subsection{A partial LLL reduction}
Our result indicates (wrongly) that IGTs have to be applied only on the superdiagonal entries of $\R$.  
Such an approach can be numerically unstable since it can produce large off-diagonal entries in $\R$ relative to the diagonal entry (see \cite{Hig89}).
Some other IGTs are needed to bound the off-diagonal entries of $\R$.  The chosen approach is as follows.  If columns $k$ and $k-1$ are going to be permuted, we first apply IGTs to make the absolute values of $r_{k-1,k}, \ldots, r_{1k}$ as small as possible; otherwise, we do not apply any IGT.  This way, the number of IGTs is minimized.  
Based on this idea, we present a partial LLL 
(PLLL) reduction algorithm.  

\begin{algorithm} \label{a:MLLL}
\textnormal{(PLLL Reduction). Given generator matrix $\A \in \mathbb{R}^{m \times n}$ and the input vector $\y \in \mathbb{R}^{m}$.
The algorithm returns the reduced upper triangular matrix $\R \in \mathbb{R}^{n \times n}$, the 
unimodular matrix $\Z \in \mathbb{Z}^{n \times n}$, and the vector $\by \in \mathbb{R}^{n}$. 
\begin{tabbing}
$\quad$\= {\bf function}: $[\R, \Z,\by] = \mathrm{PLLL}(\A, \y)$ \\
\> Compute the sorted QR decomposition of $\A$ (see \cite{WubBR01})\\
\> and set $\by = \W_{1}^T\y$ \\
\> $\Z=\I$ \\
\> $k=2$ \\ 
\> {\bf while} \= $k \le n$  \\
\>            \>        $r_t = r_{k-1,k} - \round{r_{k-1,k}/r_{k-1,k-1}} \times r_{k-1,k-1} $ \\
\>            \>        {\bf if} \=  $r_{k-1,k-1} > \sqrt{r_t^2 + r_{kk}^2}$ \\ 
\>            \>	\>  {\bf for} \= $i=k-1\!:\!-1\!:\!1$ \\
\>            \>        \>   \> \mbox{Apply IGT $\Z_{ik}$ to $\R$, .i.e., $\R=\R \Z_{ik}$} \\
\>            \>        \>   \> \mbox{Update $\Z$, i.e., $\Z= \Z\Z_{ik}$} \\
\>	      \>        \> {\bf end} \\
\>	      \>                     \> \mbox{Interchange columns $k$ and $k-1$ of $\R$ and $\Z$} \\ 
\>	      \>	             \> \mbox{Transform $\R$ to an upper triangular matrix by a Givens rotation} \\
\>	      \>                     \> \mbox{Apply the same Givens rotation to $\by$ } \\
\>	      \>		     \> {\bf if} \= $k > 2$ \\
\>            \>                     \> 	\> $k = k - 1$ \\
\>	      \>		     \> {\bf end} \\
\>            \> {\bf else}\\
\>            \>         \> $k=k+1$ \\
\>            \> {\bf end} \\
\> {\bf end}
\end{tabbing}
}
\end{algorithm}    

We point out that our final $\R$ might not be LLL-reduced since we do not ensure property (\ref{eq:LLLprop1}).  

\subsection{Ling's and Howgrave-Graham's effective LLL reduction}
The initial version of this thesis was submitted before finding the effective
LLL reduction presented in \cite{LinN07}.
We now point out the similarities and differences with the results
derived in this chapter.
Firstly, they show that IGTs do not affect the Babai point, while we show that IGTs do not affect the entire search process.
Secondly, their modified LLL reduction algorithm uses Gram-Schmidt orthogonalization.
Our modified LLL reduction algorithm uses Householder reflections and Givens rotation. The latter is more stable.
Thirdly, our algorithm  does not apply an IGT if no column permutation is needed as it is unnecessary.
Finally, they do not take numerical stability into account, while we do.
Specifically, our reduction algorithm uses extra IGTs to prevent the serious rounding errors that can occur due to the increase of the off-diagonal elements.

\chapter{Ellipsoid-Constrained Integer Least Squares Problems}\label{s:EILS}

In some applications, one wants to solve
\begin{equation}\label{eq:EILS}
\min_{\x \in \mathcal{E}} \| \y-\A \x \|_2^2, \quad \mathcal{E} = \{ \x \in \mathbb{Z}^n: \| \A \x \|_2^2 \le \alpha^2 \},
\end{equation}
where $\y \in \mathbb{R}^n$ and $\A \in \mathbb{R}^{m \times n}$ has full column rank.
We refer to (\ref{eq:EILS}) as an ellipsoid-constrained integer least squares (EILS) problem.  
In \cite{DamEC03}, the V-BLAST reduction (see \cite{FosGVW99}) was proposed for the reduction process of the EILS problem.  In \cite{ChaG09}, it was shown that the LLL reduction makes the search process more efficient than V-BLAST.   It was also noted that for large noise in the linear model (see \eqref{eq:linearmodel}), the search process becomes extremely time-consuming both with V-BLAST and the LLL reduction.  
In Section \ref{s:searchEILS}, we show how to modify the search process given the ellipsoidal constraint based on the work in \cite{ChaG09}.
In Section \ref{s:NEILS}, we give a new reduction strategy to handle the large noise case, which unlike the LLL reduction and V-BLAST, uses all the available information.  
Finally in Section \ref{s:simulationEILS}, we present simulation results that indicate that our new reduction algorithm is much more effective than the existing algorithms for large noise.

\section{Search process}\label{s:searchEILS} 

Suppose that after the reduction stage, the original EILS problem (\ref{eq:EILS}) is transformed to the following reduced EILS problem
\begin{equation}\label{eq:rEILS}
\min_{\z \in \bar{\mathcal{E}}} \| \by-\R \z \|_2^2, \quad \bar{\mathcal{E}} = \{ \z \in \mathbb{Z}^n: \| \R \z \|_2^2 \le \alpha^2 \}.
\end{equation} 
Without loss of generality, we assume that the diagonal entries of $\R$ are positive. 
Assume that the solution of (\ref{eq:rILS}) satisfies the bound
\begin{equation*}\label{eq:rEILS2}
\| \by-\R \z \|_2^2 < \beta^2.
\end{equation*}
Then, as in the OILS problem \eqref{eq:rILS}, we have the following inequalities (see Sect.\ \ref{s:search})
\begin{align}
& \mbox{level }n : r_{nn}^2 (z_n - c_n)^2 < \beta^2, \label{eq:EILSineq_n}\\ 
& \quad \vdots \nonumber \\
& \mbox{level }k : r_{kk}^2 (z_k - c_k)^2 < \beta^2 - \sum_{i=k+1}^n r_{ii}^2 (z_i - c_i)^2 \label{eq:EILSineq_k}\\ 
& \quad \vdots \nonumber \\
& \mbox{level }1 : r_{11}^2 (z_1 - c_1)^2 < \beta^2 - \sum_{i=2}^n r_{ii}^2 (z_i - c_i)^2,  \label{eq:EILSineq_1}
\end{align}
where $c_k$, for $k=n\!:\!1$, are defined in \eqref{eq:c_k}.
In Section \ref{s:search}, we presented a search process for the OILS problem based on these inequalities.
For the EILS problem \eqref{eq:rEILS}, the search also needs to take the constraint ellipsoid into account. 
In \cite{ChaG09},  the search given in Section \ref{s:search} was modified in order to ensure that the enumerated integer points satisfy the constraint ellipsoid.  Here, we show how to compute the bounds of the constraint.   

The constraint ellipsoid $\|\R \z \|^2_2 \le \alpha^2 $ can be written as
\begin{equation}\label{eq:ceq}
\sum_{k+1}^n (r_{kk} z_k + \sum_{j = k+1}^n r_{kj} z_j)^2 \le \alpha^2
\end{equation}
Define
\begin{equation}\label{eq:bk}
b_n = 0, \quad b_k = \sum_{j=k+1}^n r_{kj} z_j, \quad k = n-1:-1:1.
\end{equation}       
\begin{equation}\label{eq:sk}
s_n = \alpha^2, \quad s_{k-1} = \alpha^2 - \sum_{i=k}^n (r_{ii} z_i + \sum_{j=i+1}^n r_{ik} z_j)^2 = s_k - (r_{kk} z_k + b_k), \quad k=n:-1:2.
\end{equation}
With (\ref{eq:bk}) and (\ref{eq:sk}), we rewrite (\ref{eq:ceq}) as
\begin{equation*}
(r_{kk} z_k + b_k)^2 \le s_k, \quad k = n:-1:1. 
\end{equation*} 
Therefore, at level $k$ in the search process, $z_k$ is constrained to the interval 
\begin{equation} \label{ellipse_bound}
l_k \le z_k \le u_k, \quad l_k = \Big\lceil \frac{- \sqrt{s_k} - b_k}{r_{kk}} \Big\rceil, \quad u_k = \Big\lfloor \frac{ \sqrt{s_k} - b_k}{r_{kk}} \Big\rfloor,
\quad k=n:-1:1.
\end{equation} 
At each level in the search process, we compute $l_k$ and $u_k$ with (\ref{ellipse_bound}).  
If $l_k > u_k$, then no valid integer exists at level $k$ and we move up to level $k+1$.
In the following, the Schnorr-Euchner search algorithm is modified to ensure that $z_k$ is constrained to $[l_k,u_k]$; see \cite{ChaG09}.

\begin{algorithm}\label{alg:searcheils}
\textnormal{(SEARCH-EILS)
Given nonsingular upper triangular matrix $\R \in \mathbb{R}^{n \times n}$ with positive diagonal entries, the vector $\by \in \mathbb{R}^{n}$, the initial search ellipsoid bound $\beta$ and the constraint ellipsoid bound $\alpha$.  The search algorithm finds the solution $\z \in \mathbb{Z}^{n}$ to the EILS problem (\ref{eq:rEILS}).
\begin{tabbing}
$\quad$ \= {\bf function}: $\z = \mathrm{SEARCH\_EILS}(\R, \by, \beta, \alpha)$ \\
\>   1. (Initialization) Set $k=n,  b_k = 0$ and $s_k = \alpha^2$  \\
\>  2. \= Set $lbound_k = 0$ and $ubound_k = 0$. \\
\>  \>  Compute $l_k = \Big\lceil \frac{- \sqrt{s_k} - b_k}{r_{kk}} \Big\rceil, u_k = \Big\lfloor \frac{ \sqrt{s_k} - b_k}{r_{kk}} \Big\rfloor$ \\
\>  \>  {\bf if} \= $u_k < l_k$  \\
\>  \>  \>  Go to Step 4 \\
\>  \>  {\bf end} \\
\>  \>  {\bf if} $u_k = l_k$ \\
\>  \>   \> Set $lbound_k = 1$ and $ubound_k = 1$ \\
\>  \>  {\bf end} \\
\>  \>  Compute $c_k = (\bar{y}_k - b_k)/r_{kk}$. Set $z_k = \round{c_k}$, \\
\>  \> {\bf if} $z_k \le l_k$ \\
\>  \>  \> $z_k = l_k$, set $lbound_k = 1$ and $\Delta_k = 1$ \\
\>  \> {\bf else if } $z_k \ge u_k$ \\
\>  \>  \> $z_k = u_k$, set $ubound_k = 1$ and $\Delta_k = -1$ \\ 
\>  \>  {\bf else } // no bound of the constraint is reached \\
\>  \> \> Set $\Delta_k = \mbox{sgn}(c_k - z_k)$ \\ 
\>  \> \=  {\bf end} \\     
\> 3. (Main step) \\
\>   \> {\bf if } \= $r_{kk}^2 (z_k - c_k)^2 > \beta^2 - \sum_{i=k+1}^n r_{ii}^2 (z_i - c_i)^2$ \\
\>   \>    \>    go to Step 4 \\ 
\>   \> {\bf else if}  $k > 1$ \\ 
\>   \>    \> Compute $b_{k-1} = \sum_{j=k}^n r_{k-1,j} z_j$, $s_{k-1} = s_k - (r_{kk} z_k + b_k)^2$\\
\>   \>    \>  Set $k=k-1$, go to Step 2  \\ 
\>   \>  {\bf else} $\quad$ // case k = 1 \\  
\>   \>     \>  go to Step 5 \\
\>   \>  {\bf end} \\ 
\> 4. (Invalid point) \\
\>    \> {\bf if} \= $k = n $ \\
\>    \>  \>    terminate \\
\>    \> {\bf else } \\
\>    \>    \> $k = k+1$, go to Step 6 \\
\>    \> {\bf end} \\	
\> 5. (Found valid point) \\  
\>   \> Set $\hbz = \z$, $\beta = \sum_{k=1}^n r_{kk}^2 (\hat{z}_k - c_k)^2$  \\
\>    \> $k = k + 1$, go to Step 6 \\   
\> 6. (Enumeration at level k) \\
\>  \> {\bf if} $ubound_k = 1$ and $lbound_k = 1$ \\
\>  \>  \> Go to Step 4 // no integer is available at this level \\
\>  \> {\bf end} \\ 
\>  \> Set $z_k = z_k + \Delta_k$ \\
\>  \> {\bf if} $z_k = l_k$ \\
\>  \>  \> Set $lbound_k = 1$, Compute $\Delta_k = -\Delta_k - \mbox{sgn}(\Delta_k)$ \\
\>  \> {\bf else if} $z_k = u_k$ \\
\>  \>  \> Set $ubound_k = 1$, Compute $\Delta_k = -\Delta_k - \mbox{sgn}(\Delta_k)$ \\ 
\>  \> {\bf else if} $lbound_k = 1$  \\
\>  \>  \> $\Delta_k = 1$ \\
\>  \> {\bf else if} $ubound_k = 1$ \\
\>  \> \> $\Delta_k = -1$ \\
\>  \> {\bf else} \\ 
\>  \> \> Compute $\Delta_k = -\Delta_k - \mbox{sgn}(\Delta_k)$ \\
\> \> {\bf end} \\
\>  \>  Go to Step 3. 
\end{tabbing}
}
\end{algorithm}

\section{Reduction}\label{s:NEILS}
As in the OILS problem, we can apply IGTs and permutations (see Section \ref{s:reduction}) in the reduction process of the EILS problem. 
With (\ref{eq:QRZ1}), (\ref{eq:QRZ2}) and (\ref{eq:QRZ3}), the original EILS problem (\ref{eq:EILS}) is transformed to the reduced EILS problem \eqref{eq:rEILS}.
Our new reduction for the EILS problem is based on a reduction strategy first applied to the box-constrained integer least squares problem.

\subsection{A constraint reduction strategy}\label{s:BILS}

In several applications, one wants to solve
\begin{equation}\label{eq:BILS}
\min_{\x \in \mathcal{B}} \|\y-\A \x\|_2^2, \quad \mathcal{B} = \{ \x \in \mathbb{Z}^n: \l \le \x \le \u, \l \in \mathbb{Z}^n, \u \in \mathbb{Z}^n\}.
\end{equation}  
We refer to (\ref{eq:BILS}) as a box-constrained integer least squares (BILS) problem.  

The transformations applied on $\A$ during the reduction can be described as a QR factorization of $\A$ with column pivoting:
\begin{equation}\label{eq:EILSQRZ1}
\Q^T \A \P = 
\begin{bmatrix}
\R \\
{\bf 0} 
\end{bmatrix}  \quad
\mbox{or } \A \P = \Q_{1}^T \R, 
\end{equation}
where $\Q = [\Q_1, \Q_2] \in \mathbb{R}^{m \times m}$ is orthogonal, $\R \in \mathbb{R}^{n}$ is nonsingular upper triangular and 
$\P \in \mathbb{Z}^{n \times n}$ is a permutation matrix.  This is a special case of the QRZ factorization presented in Section \ref{s:reduction}, where we have a permutation matrix $\P$ instead of a general unimodular matrix $\Z$.  The reason is that a general unimodular matrix will make the box constraint $\mathcal{B}$ very difficult to handle in the search.  Hence, the LLL reduction is usually not used to solve the BILS problem \eqref{eq:BILS} in the literature.   
With \eqref{eq:EILSQRZ1}, we have
\begin{equation}\label{eq:EILSQRZ2}
\| \y - \A \x \|_2^2 = \| \Q_1^T \y - \R \P^{T} \x \|_2^2 + \| \Q_2^{T} \y \|_2^2. 
\end{equation}
Let 
\begin{equation}\label{eq:EILSQRZ3}
\by = \Q_1^T \y, \quad \z = \P^{T} \x, \quad \bbl = \P^T \l, \quad \bu = \P^T \u.
\end{equation}
Then, from \eqref{eq:EILSQRZ2} and \eqref{eq:EILSQRZ3}, we see that (\ref{eq:BILS}) is equivalent to
\begin{equation}\label{eq:EILSrrILS}
\min_{\z \in \mathcal{\bar{B}}} \| \by-\R \z \|_2^2, \quad \mathcal{\bar{B}} = \{ \z \in \mathbb{Z}^n: \bbl \le \z \le \bu, \l \in \mathbb{Z}^n, \bu \in \mathbb{Z}^n\}.
\end{equation}
If $\hbz$ is the solution of the transformed BILS problem (\ref{eq:EILSrrILS}), then $\hbx = \P \hbz$ is the solution of the original BILS problem (\ref{eq:BILS}).   

Most reduction algorithms are solely based on $\A$. In \cite{ChaH08}, it was shown that using the information of $\y$ and the constraint in the reduction process can make the search process much more efficient.
We call their reduction strategy a ``constraint" reduction strategy.
Here, we describe its main idea. 
In the search process at level $i$, we have 
\begin{equation}\label{eq:ineq}
r^2_{ii}(z_i - c_i)^2 < \beta - \sum_{k=i+1}^n r_{kk}^2 (z_k - c_k)^2,
\end{equation}
where $c_i$ is determined when $z_{i+1}, \ldots, z_n$ are fixed (see \eqref{eq:c_k}).  
If we can reduce the search range of $z_i$ for $i=n,n-1,\ldots,1,$ then the search will be more efficient.
Notice that this can be achieved if 
\begin{itemize}
\item The right-hand side of (\ref{eq:ineq}) is as small as possible, which means that each $r_{kk}^2 (z_k - c_k)^2$ is as large as possible.
\item $r_{ii}$ is as large as possible. 
\end{itemize}

The constraint reduction strategy looks for a permutation of $\A$ such that $|r_{kk}(z_k - c_k)|$ is as large as possible for $k=n,n-1,\ldots,1$,
where we also take into account that $r_{kk}$ should be as large as possible.  The algorithm determines the columns of the permuted $\A$ from right to left.  To determine the $k$th column, it chooses from the remaining $k$ columns the one that maximizes $|r_{kk}(z_k - c_k)|$.  
Now, one question that arises is how to choose $z_k$.  The natural approach is to set $z_k$ to be the nearest integer to $c_k$ in $[\bar{l}_k,\bar{u}_k]$.  
This can yield the following problem.  If $z_k$ is very close to $c_k$, then $|r_{kk}(z_k - c_k)|$ is small even though $r_{kk}$ is large.  Since $r_{11} \ldots r_{nn}$ is constant (note that $\mbox{det}^{1/2}(\A^T\A)= \mbox{det}(\R) = r_{11} \ldots r_{nn}$), we might end up with a large $r_{ii}$ for small index $i$ and a small $r_{ii}$ for large index $i$. This does not comply with our requirement; see the first sentence of the paragraph.   
On the other hand, if we choose $z_k$ to be the second nearest integer to $c_k$ in $[\bar{l}_k,\bar{u}_k]$, then $|z_k - c_k|$ is always larger than 0.5.  Thus, if $r_{kk}$ is large,  then $|r_{kk}(z_k - c_k)|$ is also large.  Hence, the previous problem is avoided.  Simulations in \cite{ChaH08} indicate that choosing $z_k$ to be the second nearest integer to $c_k$ in $[\bar{l}_k,\bar{u}_k]$ is more effective than other possible choices of $z_k$.
\subsection{The ellipsoidal constraint}
 
We want a reduction for the EILS problem which uses all the available information in order to improve the search.     
The natural approach would be to use the constraint reduction strategy with the ellipsoidal constraint.
Our experiments show that such an approach is inefficient.  Here, we explain why the constraint reduction strategy is effective for the BILS problem, but  ineffective for the EILS problem.  In the reduction process, we permute $\A$ such that its $k$th column maximizes $|r_{kk}(z_k - c_k)|$, where $c_k$ depends on the chosen values of $z_{k+1}, \ldots, z_n$.
In the EILS problem and unlike the BILS problem, the constraint depends on $\z$ (see (\ref{eq:rEILS})).
In the search process at level $k$, $z_k$ can take values in the interval $[l_k, u_k]$, where $l_k$ and $u_k$ depend on $z_{k+1},\ldots, z_n$.
As $z_{k+1}, \ldots, z_n$ take on different values in the search, it is possible that the $k$th column of $\A$ no longer maximizes $|r_{kk}(z_k - c_k)|$.  Numerical experiments indicate that if we have a box-constraint, it is likely that $|r_{kk}(z_k - c_k)|$ remains large, which means that the search process remains efficient.  If we have an ellipsoidal constraint, it is much less likely that $|r_{kk}(z_k - c_k)|$ remains large since in addition to $c_k$, the constraints $l_k$ and $u_k$ also change as  $z_{k+1},\ldots, z_n$ change.  In other words, the extra uncertainty in the EILS problem makes it more difficult to determine which column maximizes $|r_{kk}(z_k - c_k)|$.
 
To overcome this difficulty, we construct the smallest hyper-rectangle which includes the constraint ellipsoid.  The edges of the hyper-rectangle are parallel to the $\z$-coordinate system.  We suggest to use this new box-constraint instead of the constraint ellipsoid in the reduction.
In the constraint reduction strategy, IGTs are not used since they make the box-constraint too difficult to handle in the search.   This difficulty does not occur with the ellipsoidal constraint as shown in Section \ref{s:searchEILS}.  Hence, we can modify the constraint reduction strategy by introducing IGTs in the reduction stage for the EILS problem. 
Note that the shape of the constraint ellipsoid changes after IGTs are applied, which means that 
the box-constraint needs to be recomputed.
While the box-constraint is less precise than the constraint ellipsoid, it has the advantage of being insensitive to the chosen values of $\z$ in the reduction.  This means that it is now more likely that $|r_{kk}(z_k - c_k)|$ remains large in the search process.  
  
\subsection{Computing the box-constraint}\label{computeBox}
 
In \cite{ChaG09}, it was shown how the smallest box-constraint $[\bbl,\bu]$ that includes the constraint ellipsoid can be efficiently computed. For $k=1:n$, we want to determine
\begin{equation}\label{eq:l_k}
\bar{l}_k = \lceil  \min_{\z \in \mathbb{R}^n} \e_k^T \z \rceil, \quad \mbox{given } \| \R \z \|_2 \le \alpha,
\end{equation}  
\begin{equation}\label{eq:u_k}
\bar{u}_k = \lfloor \max_{\z \in \mathbb{R}^n} \e_k^T \z \rfloor, \quad \mbox{given } \| \R \z \|_2 \le \alpha.
\end{equation}
We first solve for $\bar{u}_k$.
Let $ \p = \R \z$. Substituting in (\ref{eq:u_k}), we obtain
\begin{equation}\label{eq:p}
\bar{u}_k = \lfloor \max_{\p} \e_k^T \R^{-1} \p \rfloor, \quad \mbox{given } \| \p \|_2 \le \alpha.
\end{equation} 
Using the Cauchy-Schwarz inequality (see \cite[p.\ 53]{GolV96}), 
\begin{equation}
\e_k^T \R^{-1}\p \le \| \R^{-T} \e_k \|_2 \| \p \|_2 \le \| \R^{-T} \e_k \|_2 \alpha.
\end{equation}
The inequalities become equalities if and only if $\p$ and $\R^{-T} \e_k$ are linearly dependent, and $\| \p \|_2 = \alpha$, i.e.,  
$\p = \alpha \R^{-T} \e_k / \| \R^{-T} \e_k\|_2 $. Substituting $\p$ in (\ref{eq:p}), we get $\bar{u}_k = \lfloor \alpha \norm{ \R^{-T} \e_k }_2  \rfloor$.
Note that  $\max_{\z \in \mathbb{R}^n} \e_k^T \z = - \min_{\z \in \mathbb{R}^n} \e_k^T \z$ implies that 
$\bar{l}_k =  \lceil - \alpha \norm{ \R^{-T} \e_k }_2 \rceil$.  To efficiently compute $\R^{-T} \e_k$, we solve for $\q$ in the lower triangular system $\R^T \q = \e_k $. 
The algorithm is summarized as follows.

\begin{algorithm} 
\textnormal{(BOX).
Given the nonsingular upper triangular $\R \in \mathbb{R}^{n \times n}$, the constraint ellipsoid bound $\alpha$ 
and an integer $k$, where $k=1:n$. The algorithm computes the interval $[\bar{l}_k,\bar{u}_k]$ of the hyper-rectangle $[\bbl,\bu]$ which  
includes the constraint ellipsoid. 
\begin{tabbing}
$\quad$ \= {\bf function}: $[\bar{l}_k,\bar{u}_k] = \mathrm{BOX}(\R, \alpha, k)$ \\
\> Solve $\R^{T} \q = \e_k$ for $\q$ by forward substitution \\
\> Compute  $\bar{u}_k = \lfloor \alpha \norm{ \q }_2  \rfloor$ and $\bar{l}_k =  \lceil - \alpha \norm{ \q }_2 \rceil$
\end{tabbing}
}
\end{algorithm}

\subsection{A new reduction algorithm}\label{s:CLLLa}

Our new algorithm for the EILS problem merges the ideas of the constraint reduction strategy, which uses all the available information, and of the LLL reduction, which applies IGTs to strive for $r_{11} < \ldots < r_{nn}$.  We call our algorithm Constrained LLL (CLLL) reduction.  In the CLLL reduction, we use constraint information and IGTs to strive for $|r_{11}(z_1 - c_1)| < \ldots < |r_{nn}(z_n - c_n)|$.  

We now describe our reduction algorithm. The CLLL reduction starts by finding the QR decomposition of $\A$ by Householder transformations, then computes $\by$ and works with $\R$ from left to right. At the $k$th column of $\R$, the algorithm applies IGTs to ensure that $|r_{ik}| < \frac{1}{2} r_{ii}$ for $i=k-1\!:\!-1\!:\!1$. 
Then, it computes the constraints $[\bar{l}_k,\bar{u}_k]$ of $z_k$ (see Section \ref{computeBox}) and approximates $c_k$.  The reason that $c_k$ needs to be approximated is that at the $k$th column, $z_{k+1}, \ldots, z_n$ are not yet determined (see \eqref{eq:c_k}).  The algorithm approximates \eqref{eq:c_k}
by $\bar{c}_k = \bar{y}_k/r_{kk}$ for $k=n\!:\!-1\!:\!1$.  As in the constraint reduction strategy (see Sect.\ \ref{s:BILS}),  it sets $z_k$ to be the second nearest integer to $\bar{c}_k$ in $[\bar{l}_k,\bar{u}_k]$. 
Then, if permuting columns $k-1$ and $k$ maximizes $|r_{kk}(z_k - \bar{c}_k)|$, it does so, applies a Givens rotation to $\R$ from the left to bring $\R$ back to an upper triangular form, simultaneously applies the same Givens rotation to $\by$, and moves down to column $k-1$; otherwise it moves up to column $k+1$. We present our implementation of the CLLL reduction.

\begin{algorithm}
\textnormal{(CLLL REDUCTION).
Given the generator matrix $\A \in \mathbb{R}^{m \times n}$, the input vector $\y \in \mathbb{R}^{m}$ and 
the constraint ellipsoid bound $\alpha$.  The algorithm returns the reduced upper triangular matrix $\R \in \mathbb{R}^{n \times n}$, the 
unimodular matrix $\Z \in \mathbb{Z}^{n \times n}$, and the vector $\by \in \mathbb{R}^{n}$. 
\begin{tabbing}
$\quad$\= {\bf function}: $[\R, \Z,\by] = \mathrm{CLLL}(\A, \y,\alpha)$ \\
\> Compute the QR decomposition of $\A$ and set $\by = \Q_{1}^T\y$ \\
\> $\Z := \I_n$ \\
\> $k = 2$ \\
\> {\bf while} \= $k \le n$  \\
\>            \> {\bf for}  \= $i=k-1\!:\!-1\!:\!1$ \\
\>	       \>    \> \mbox{Apply the IGT $\Z_{ik}$ to $\R$, i.e, $\R := \R \Z_{ik}$} \\
\>	       \>    \> \mbox{Update $\Z$, i.e, $\Z := \Z \Z_{ik}$} \\
\>            \>   {\bf end} \\
\>            \> $\R^{\prime} := \R$ \\
\>            \>  \mbox{Interchange columns $k-1$ and $k$ of $\R^{\prime}$ and transform $\R^{\prime}$ to} \\
\>            \>  \mbox{an upper triangular matrix by a Givens rotation, $\G$} \\
\>	      \>  $ \by^{\prime} = \G \by  $ \\
\>            \> \mbox{ // Compute the box constraint of $\R$ for $z_k$} \\
\>            \>  $[\bar{l}_k,\bar{u}_k] = \mbox{BOX}(\R,\alpha,k) $ \\
\>            \> \mbox{ // Compute the box constraint of $\R^{\prime}$ for $z^{\prime}_k$ } \\
\>            \> $[\bar{l}^{\prime}_k,\bar{u}^{\prime}_k]=\mbox{BOX}(\R^{\prime},\alpha,k)$\\
\>	      \>  // \mbox{Compute $|r_{kk}(z_k - \bar{c}_k)|$ } \\
\>            \>   $\bar{c}_k := y_k/r_{kk} $\\
\>            \>   $\bar{c}^{\prime}_k := y^{\prime}_k/r^{\prime}_{kk}$ \\
\>	      \>  \mbox{ Set $z_k$ to be the second nearest integer to $\bar{c}_k$ on $[\bar{l}_k,\bar{u}_k]$ } \\
\>	      \>  \mbox{ Set $z^{\prime}_k$ to be the second nearest integer to $\bar{c}^{\prime}_k$ on $[\bar{l}^{\prime}_k,\bar{u}^{\prime}_k]$ } \\
\> 	      \> {\bf if} $|r^{\prime}_{kk}(z^{\prime}_k-\bar{c}^{\prime}_k)| > |r_{kk}(z_k-\bar{c}_k)|$ \\
\>	      \>     \>  $\R := \R^{\prime}$\\
\>            \>     \>  $\by := \by^{\prime}$ \\
\>	      \>		     \> {\bf if} \= $k > 2$ \\
\>            \>                     \> 	\> $k = k - 1$ \\
\>	      \>		     \> {\bf end} \\
\>            \> {\bf else}\\
\>            \>         \> $k=k+1$ \\ 
\>	      \> {\bf end} \\
\> {\bf end}
\end{tabbing}
}
\end{algorithm}
Note that the CLLL reduction algorithm moves from the left to the right of $\R$, which explains why $z_{k+1}, \ldots, z_n$ are not determined at the $k$th column.  It is straightforward to modify the LLL reduction algorithm for it to move from right to left, which allows us to determine $c_k$ exactly.
Surprisingly, simulation results indicate that such an approach is less effective than the CLLL reduction.  Further investigation is required to understand why $|r_{kk}(z_k - \bar{c}_k)|$ is a better criterion than $|r_{kk}(z_k - c_k)|$ to determine the permutation of $\A$.

\section{Numerical simulations}\label{s:simulationEILS}

In this section, we implemented the CLLL algorithm given in Section \ref{s:CLLLa}. We did numerical simulations
to compare its effectiveness with the LLL reduction.  
Algorithm \ref{alg:searcheils} is used for the search process.  All our simulations were performed in MATLAB 7.9 on a Pentium-4, 2.66 GHz machine with 501 MB memory running Ubuntu 8.10. 

\subsection{Setup}
We took $\A$ to be $n \times n$ matrices drawn from i.i.d. zero-mean, unit variance Gaussian distribution.  We construct $\y$ as follows
\begin{equation*}
\y = \A \x + \v,
\end{equation*}
where the noise vector $\v \sim \mathcal{N}(0,\sigma^2 I)$.   To generate $\x$, we randomly pick an integer point inside some hyper-rectangle.  Then, we set 
$\alpha = \| \A \x \|_2 $.  
In Figs.\ \ref{f:n05} to \ref{f:n10}, we display the average CPU search time in seconds for $\sigma=0.5$ to $\sigma=10$. 
We took dimensions $n=5\!:\!30$ for Figs.\ \ref{f:n05} and \ref{f:n1}, $n=5\!:\!15$ for Fig.\ \ref{f:n2}, $n=5\!:\!10$ for Fig.\ \ref{f:n4}, and performed 20 runs for each case.   
In Fig.\ \ref{f:n10}, we took $n=4\!:\!8$ and performed 5 runs. 
The reason that the dimensions of the experiments get smaller as $\sigma$ gets larger is that the search process with the LLL reduction becomes extremely time-consuming.  For instance, for typical problems with dimension $n$ = 15 and $\sigma=2$, the search time with the LLL reduction is more than $10^3$s, while the search time with the CLLL reduction is around 1s.    In Fig.\ \ref{f:Babai}, we compare the Babai integer points corresponding to the LLL reduction and the CLLL reduction.  We give the ratio between $\beta$ at the Babai integer point and $\beta$ at the EILS solution with different noise, where $\beta = \beta(z) = \| \by-\R \z \|_2^2$. We present the results for dimension $n=5$ with 20 runs.  We obtain similar results for other dimensions.  
As shown in \cite{ChaG09}, the cost of the search time dominates the cost of the whole algorithm when $\sigma \ge 0.5$.  
For this reason, the figures do not take into account the reduction time, which is negligible. 

\begin{figure}[h!]
\centering
\includegraphics[scale=0.75]{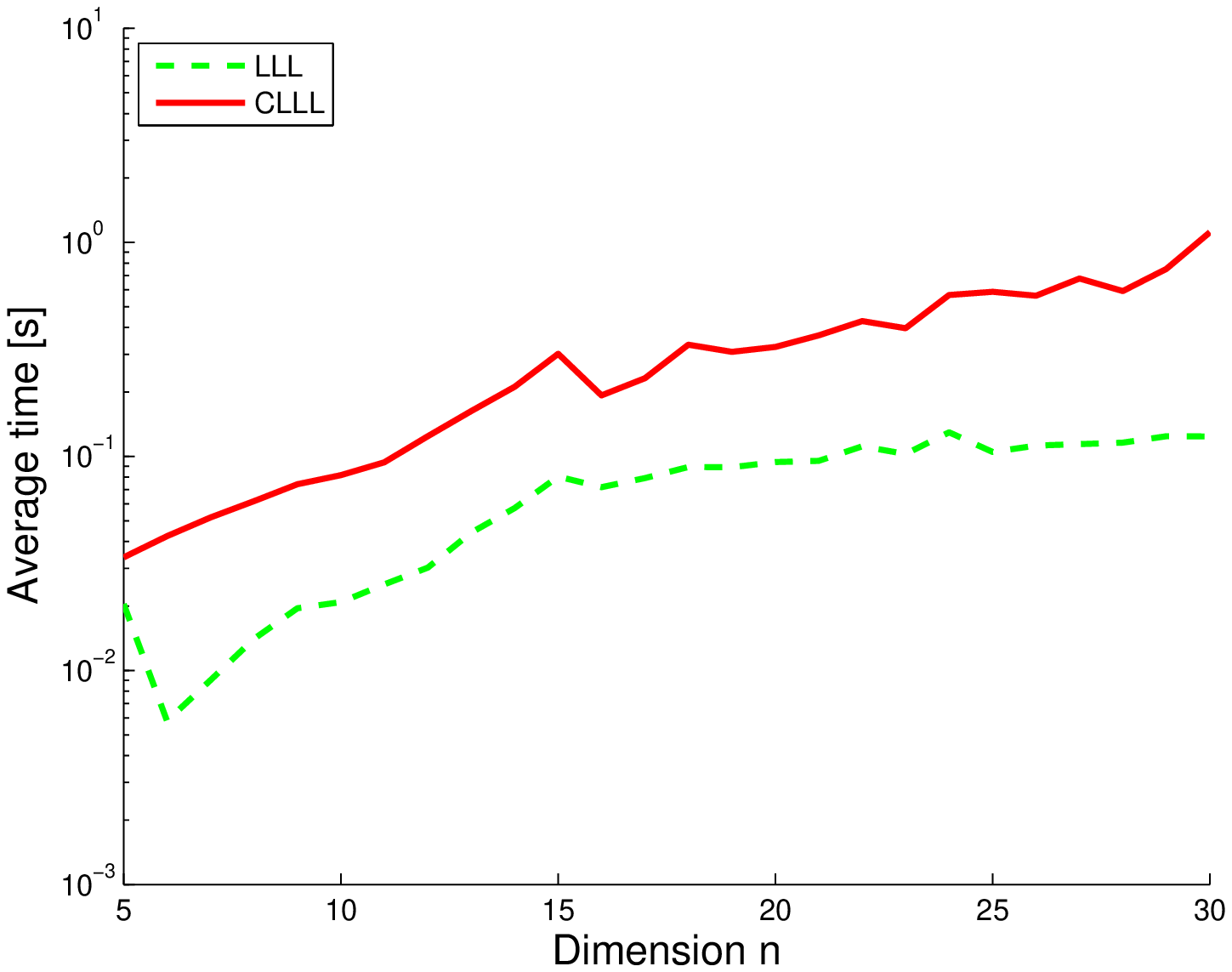}
\caption{Average search time versus dimension, $\sigma = 0.5$.} \label{f:n05}
\end{figure}

\begin{figure}[h!]
\centering
\includegraphics[scale=0.75]{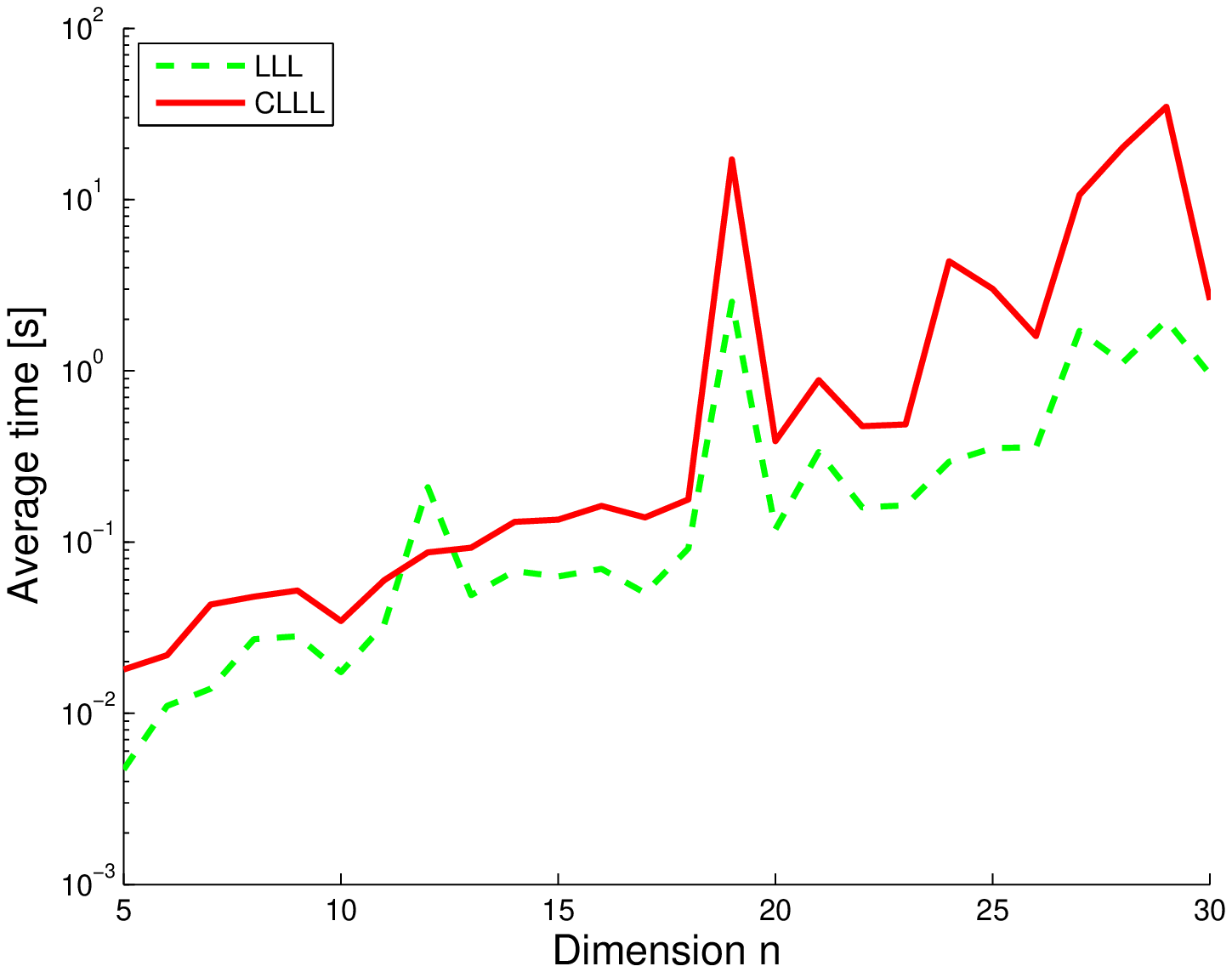}
\caption{Average search time versus dimension, $\sigma = 1$.} \label{f:n1}
\end{figure}

\begin{figure}[h!]
\centering
\includegraphics[scale=0.75]{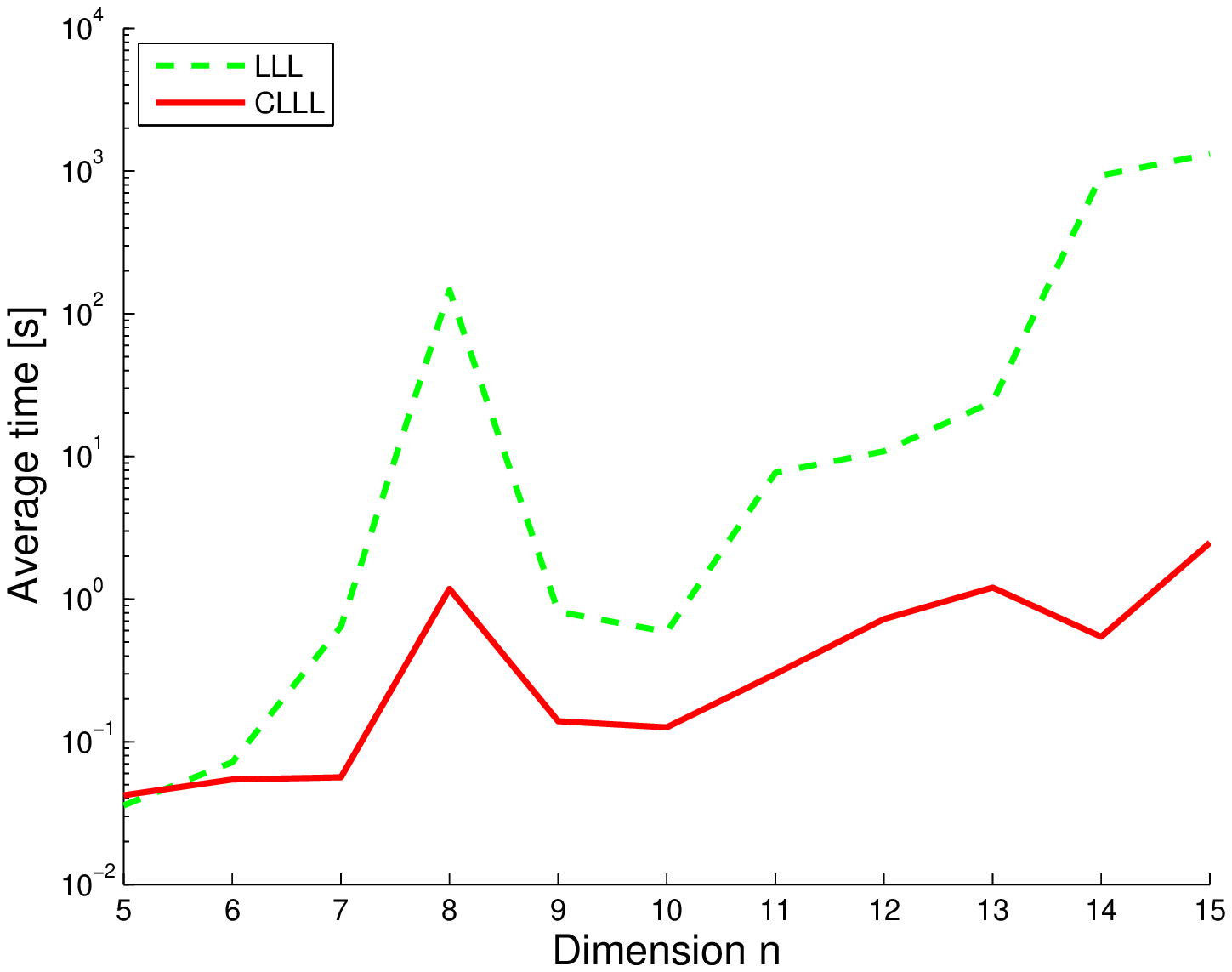}
\caption{Average search time versus dimension, $\sigma = 2$.} \label{f:n2}
\end{figure}

\begin{figure}[h!]
\centering
\includegraphics[scale=0.75]{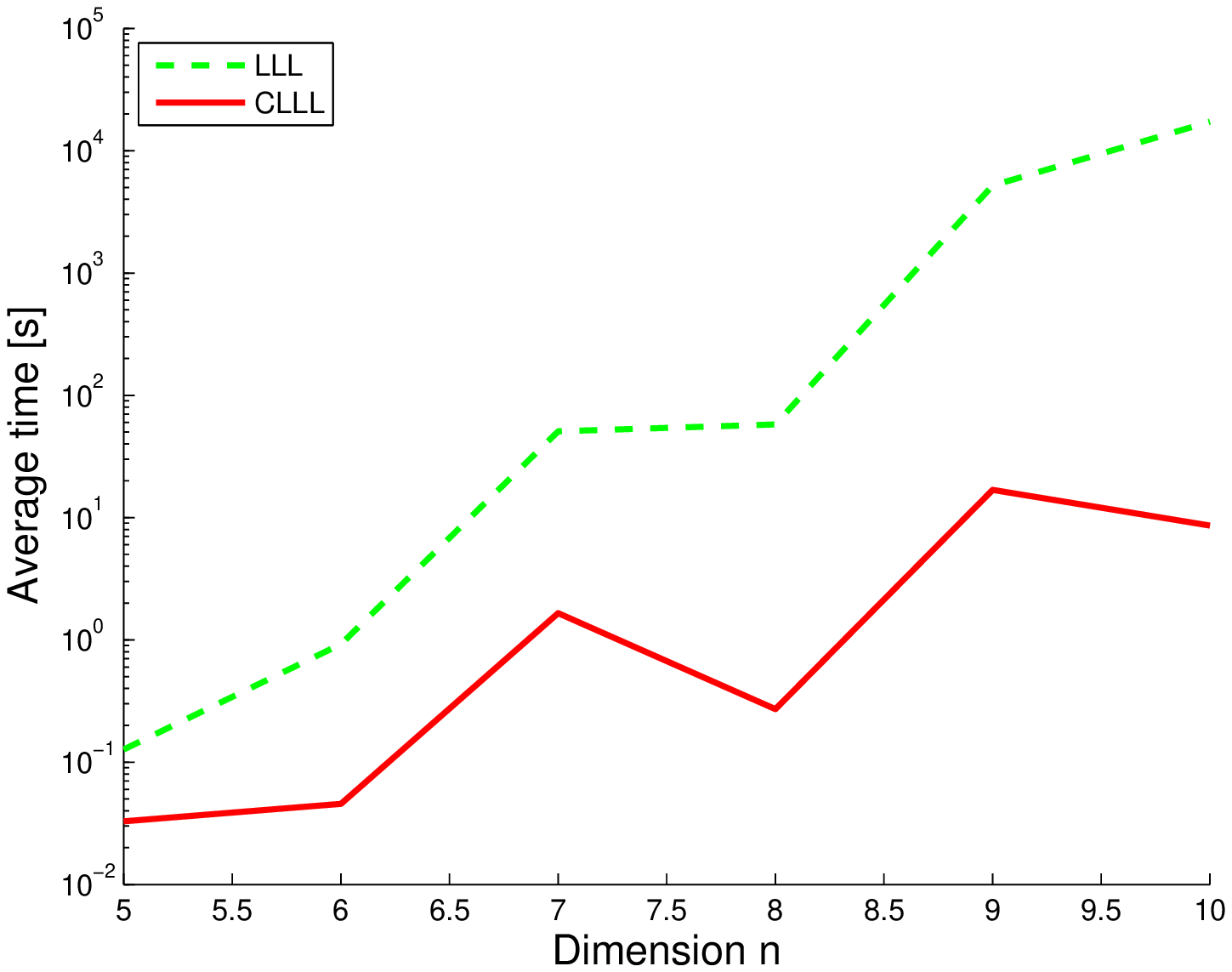}
\caption{Average search time versus dimension, $\sigma = 4$.} \label{f:n4}
\end{figure}

\begin{figure}[h!]
\centering
\includegraphics[scale=0.75]{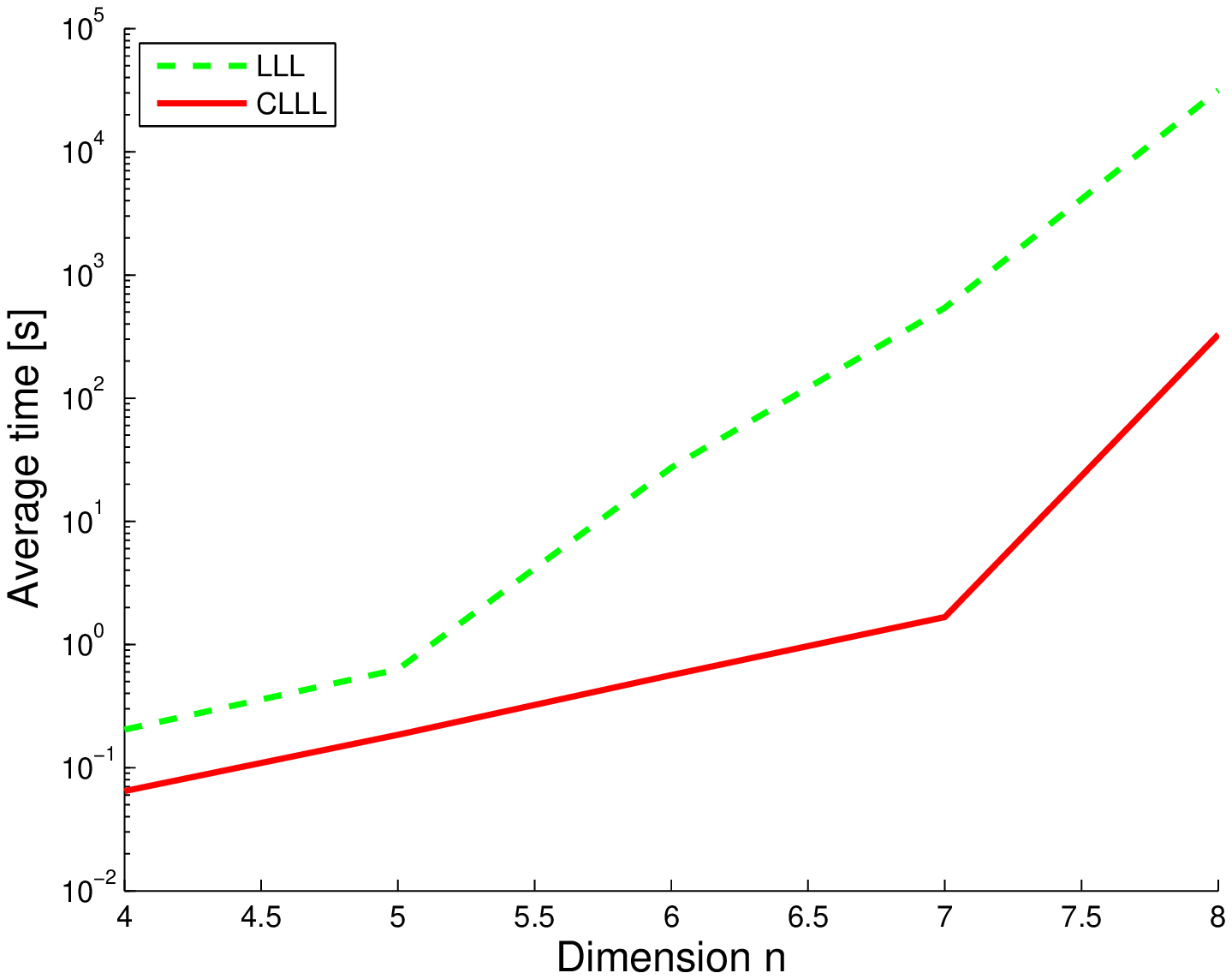}
\caption{Average search time versus dimension, $\sigma = 10$.} \label{f:n10}
\end{figure}

\begin{figure}[h!]
\centering
\includegraphics[scale=0.75]{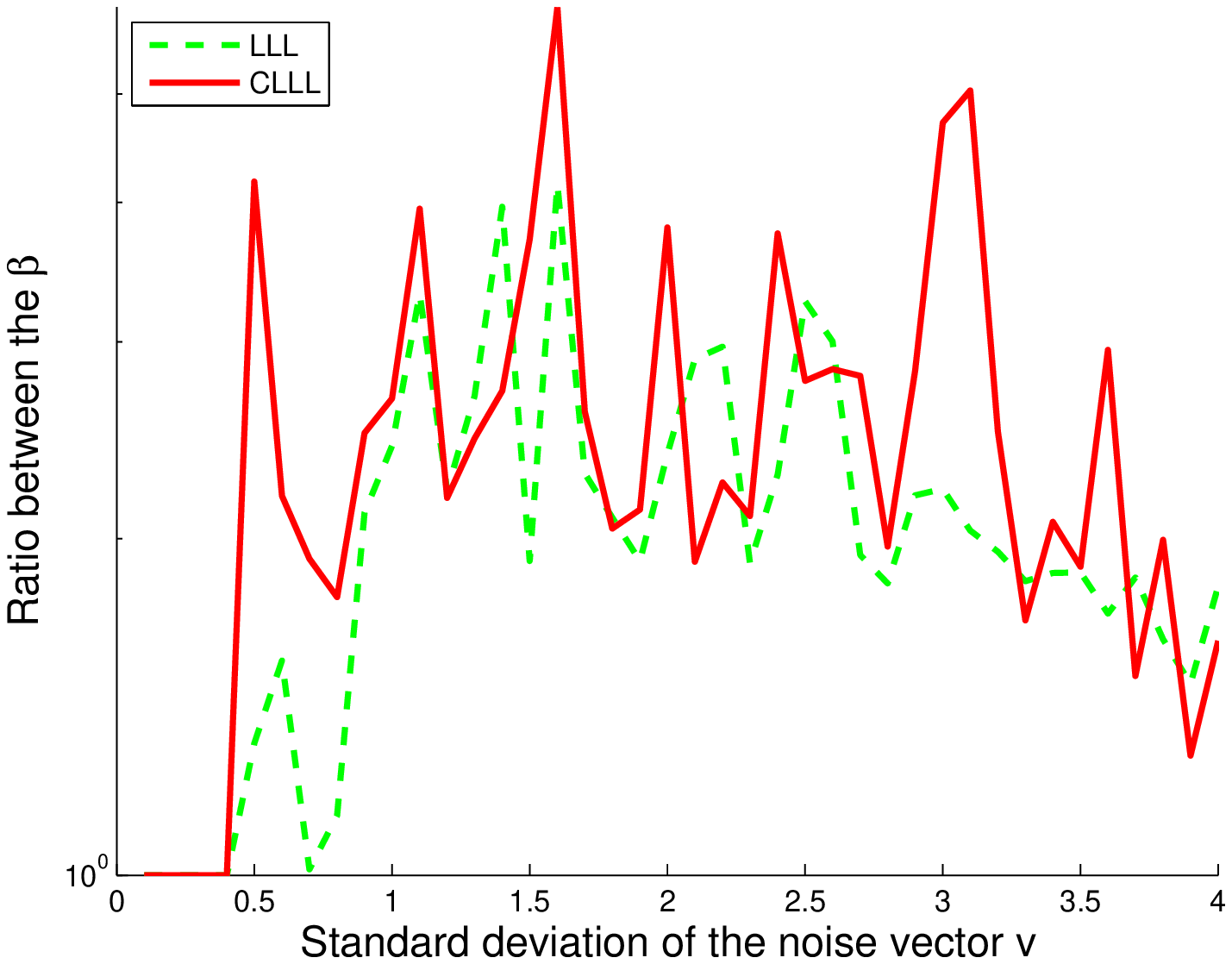}
\caption{Ratio between $\beta$ at the Babai integer point and $\beta$ at the EILS solution, dimension $n=5$.}\label{f:Babai}
\end{figure}

\subsection{Comparison of the reduction strategies}

From the simulations, we see that the most effective reduction algorithm depends on the noise size. In Figs.\ \ref{f:n05} and \ref{f:n1}, we see that when the noise is small, i.e, $\sigma \le 1$, the LLL reduction is more effective than the CLLL reduction. When the noise gets larger, the CLLL reduction becomes much more effective than the LLL reduction.  Notice that the improvement of the CLLL reduction over the LLL reduction becomes more significant with larger noise.   For example, when $n=7$, the CLLL reduction is slightly more effective than the LLL reduction when $\sigma=2$, but close to 1000 times more effective when $\sigma=10$.  We observe that with the LLL reduction, the search time becomes more and more prohibitive as the noise gets larger.
The CLLL reduction provides considerable savings in the search time.  For example when $\sigma=2$ and $n=8$, the search time with the LLL reduction is more than 100s, while it is about 1s with the CLLL reduction.  When $\sigma=4$ and $n=10$, the search time with the LLL reduction is more than $10^4$s, while it is about 10s with the CLLL reduction.   

We now explain why the LLL reduction is preferable over the CLLL reduction in Figs.\ \ref{f:n05} and \ref{f:n1}.     
When the noise is small, it is likely that the OILS solution is close the ellipsoidal constraint. As can be seen in Fig.\ \ref{f:Babai}, in these situations the Babai integer point found with the LLL reduction is usually very close to the EILS solution.  With the CLLL reduction, the goal is to make  $|r_{kk}(z_k - \bar{c}_k)|$ as large as possible for $k=n,\ldots,1$.  While $|\prod_{k=1}^n r_{kk}|$ is constant through the reduction process, $|\prod_{k=1}^n r_{kk}(z_k-\bar{c}_k)|$ is not.  Making $|r_{kk}(z_k - \bar{c}_k)|$ large for some $k$ will not make $|r_{jj}(z_j-\bar{c}_j)|$ smaller for $j < k$.   It is possible that the CLLL reduction permutes $\A$ such that $\sum_{k=1}^{n} r_{kk}^2(z_k - \bar{c}_k)^2$ ends up to be a large number.  While such an ordering allows to prune more points in the search process, it also means that the Babai integer point found with the CLLL reduction is usually worse than the one found with the LLL reduction.  A better Babai integer point makes the intersection between the search ellipsoid and the constraint ellipsoid smaller.
This implies that the search process with the CLLL reduction must find many points before it reaches the EILS solution, while with the LLL reduction, only very few points are found before the EILS solution.  For large noise (see Figs.\ \ref{f:n2} to \ref{f:n10}), it is no longer true that the Babai integer point found with the LLL reduction is very close to the EILS solution.  It is here where the extra search pruning that the CLLL reduction provides becomes advantageous.   
Thus, the LLL reduction is the most effective reduction algorithm for small noise, which includes many communications applications; 
while the CLLL reduction is by far the most effective reduction algorithm for large noise, which includes the applications where the linear model (see \eqref{eq:linearmodel}) is not assumed.

\chapter{Summary and future work}\label{s:summary}

This thesis was concerned with solving the ordinary integer least squares (OILS) problem
\begin{equation*}
\min_{\x \in \mathbb{Z}^n} \| \y-\A \x \|_2^2,
\end{equation*}
where $\y \in \mathbb{R}^n$ and $\A \in \mathbb{R}^{m \times n}$ has full column rank.
In the GNSS literature, one needs to solve the following quadratic form of the OILS problem
\begin{equation*}
\min_{\x\in \mathbb{Z}^n} (\x-\hbx)^T\W_{\hbx}^{-1}(\x-\hbx),
\end{equation*} 
where $\hbx\in \mathbb{R}^n$ is the real-valued least squares (LS) estimate of the double differenced integer ambiguity vector $\x\in \mathbb{Z}^n$, and $\W_{\hbx}\in \mathbb{R}^{n\times n}$ is its covariance matrix, which is symmetric positive definite.

There are two steps in solving an OILS problem: reduction and search.  The main focus of this thesis was on the reduction step. 

In Chapter \ref{s:misconceptions}, we have shown that there are two misconceptions about the reduction in the literature.
The first is that the reduction should decorrelate the ambiguities as much as possible.  We have proved that this is incorrect: only some ambiguities should be decorrelated as much as possible.  
Our new understanding on the role of IGTs in the reduction process led to the PREDUCTION algorithm, a more computationally efficient and stable reduction algorithm than both LAMBDA reduction and MREDUCTION.      The second misconception is that the reduction process should reduce the condition number of the covariance matrix.  We gave  examples which demonstrate that the condition number is an ineffective criterion to evaluate the reduction.  Finally, we translated our result from the quadratic OILS form to the standard OILS form.  Our new understanding on the role of IGTs in the LLL reduction algorithm led to the more efficient PLLL reduction algorithm.

In Chapter \ref{s:EILS}, we discussed how to solve the ellipsoid-constrained integer least squares (EILS) problem
\begin{equation}
\min_{\x \in \mathcal{E}} \| \y-\A \x \|_2^2, \quad \mathcal{E} = \{ \x \in \mathbb{Z}^n: \| \A \x \|_2^2 \le \alpha^2 \},
\end{equation}
where  $\y \in \mathbb{R}^n$ and $\A \in \mathbb{R}^{m \times n}$ has full column rank.  With the existing reduction algorithms for the EILS problem, the search process  is extremely time-consuming for large noise.   
We proposed a new reduction algorithm which, unlike existing algorithms, uses all the available information: the generator matrix, the input vector and the ellipsoidal constraint. 
Simulation results indicate that the new algorithm greatly decreases the computational cost of the search process for large noise. 

In the future, we would like to investigate the following problems:
\begin{itemize}
\item Box-constrained integer least squares (BILS) problems often arise in communications applications.   In the literature of BILS problems, IGTs are usually not applied in the reduction phase since they make the box constraint too difficult to handle in the search phase.   
We would like to see if the modified box constraint can be efficiently approximated by a larger constraint, which can be used easily in the search.     
The search process would have to be modified to ensure that the search ellipsoid is only updated if 
the integer point found satisfies the initial box-constraint.
While the larger constraint makes the search less efficient, the reduction becomes more effective due to the IGTs.   
\item For the BILS and EILS problem, it was shown that using all the available information in the reduction phase can be very effective.
For the OILS problem, the LLL reduction is solely based on the generator matrix $\A$.  We would like to determine if using $\y$ can lead to a more effective reduction algorithm.     
\end{itemize}

\bibHeading{References}

\bibliographystyle{plain}


\end{document}